\newtheorem{theorem}{Theorem}[section]
\newtheorem{definition}[theorem]{Definition}
\newtheorem{remark}[theorem]{Remark}
\newcommand{\disp}{\displaystyle}
\newcommand{\mbf}[1]{\mathbf{#1}}
\newcommand{\opnm}{\operatorname}
\newcommand{\beqn}{\begin{equation}}
\newcommand{\eeqn}{\end{equation}}
\newcommand{\tran}[1]{{#1}^{\operatorname{T}}}
\newcommand{\mc}{\mathcal}
\newcommand{\mbb}{\mathbb}
\newcommand{\rst}[1]{\ensuremath{{\mathbin\upharpoonright}\raise-.5ex\hbox{$#1$}}} 
\def\Xint#1{\mathchoice
{\XXint\displaystyle\textstyle{#1}}%
{\XXint\textstyle\scriptstyle{#1}}%
{\XXint\scriptstyle\scriptscriptstyle{#1}}%
{\XXint\scriptscriptstyle\scriptscriptstyle{#1}}%
\!\int}
\def\XXint#1#2#3{{\setbox0=\hbox{$#1{#2#3}{\int}$ }
\vcenter{\hbox{$#2#3$ }}\kern-.6\wd0}}
\def\dashint{\Xint-}
\title{Randomized Wavelets on Arbitrary Domains and Applications to Functional MRI Analysis}
\author{S. G\"orkem \"Ozkaya}
\abstract{
Wavelets have been shown to be effective bases for many classes of natural
signals and images.  Standard wavelet bases have the entire vector space $\mathbb R^n$ as
their natural domain. It is fairly straightforward to adapt these to rectangular subdomains, and there also exist constructions for domains with more complex boundaries.   However those methods are ineffective when we deal with domains that
are very arbitrary and convoluted.  A particular example of interest is the human cortex, which is the part of the
human brain where all the cognitive activity takes place.  In this thesis, we use the lifting scheme to design wavelets on arbitrary volumes, and in particular on volumes having the structure of the human cortex.  These
wavelets have an element of randomness in their construction, which allows us to repeat
the analysis with many different realizations of the wavelet bases and
averaging the results, a method that improves the power of the analysis. 
Next, we apply this type of wavelet transforms to the statistical analysis to fMRI data, and we show that it enables us to achieve greater spatial localization than other, more standard techniques. 
}
\begin{document}

\chapter{Introduction}  
This thesis constructs and applies algorithms to carry out a wavelet analysis for data located in three-dimensional structures that are essentially ``fat surfaces'', i.e., are $\epsilon$-neighborhoods of possibly convoluted two-dimensional surfaces. This new approach was developed for the analysis of fMRI data.

This last paragraph contains several terms that we would like to explain already here, before going into the technical details of subsequent chapters. 
\subsubsection{Wavelets}
A \emph{wavelet basis} is a collection of functions $(\psi_{j,m})$, which typically have two indices $j$ and $m$, denoting \emph{scale} and \emph{position}, respectively.   In the classic setting, elements of a wavelet basis are translated and dilated versions of a prototype  called the {``mother wavelet''} function.  In other words, given a mother wavelet $\psi(x)$, the basis element $\psi_{j,m}$ is given by
$$
\psi_{j,m}(x) = C_j\psi(2^j(x-m)),
$$
where $j \in \mathbb Z$,  $m \in 2^{-j}\mathbb Z$  and $C_j$ is the normalizing constant that ensures $\|\psi_{j,m}\|=1$. 

It is generally expected that the mother wavelet has some degree of smoothness, and is either compactly supported, or has most of its energy  contained in a compact domain. 

 With a good wavelet basis, only a small fraction of the coefficients in the wavelet expansion
\[
\label{wavelet_expansion}
  f = \sum_{j,m}\gamma_{j,m}\psi_{j,m},
\]
suffices to approximate the function $f$ whenever it belongs to a  signal class of interest,  and a local change in the signal only affects a small number of corresponding coefficients.  These properties make wavelets optimal bases in compression, denoising and estimation applications \cite{ donoho93, daubechies,sweldens_siam}. 
 
 \subsubsection{Wavelets in higher dimensions}

  One dimensional wavelet designs can be translated into higher dimensional Euclidean spaces by means of tensor products, and these are called \emph{separable wavelet bases}. There are other means of obtaining wavelets directly in higher dimensions, typically leading to \emph{nonseparable wavelet bases}.  The Fourier transform was used as an indispensable tool in most of the constructions in the earlier years of the development of wavelet theory: most of the classical wavelet bases for $\mathbb R^n$, whether separable or nonseparable, depend on the Fourier transform in their designs. 
  
    \subsubsection{The lifting scheme and the second generation wavelets}
  Standard wavelet bases mentioned above have the entire vector space $\mathbb R^n$ as
their natural domain.   It is fairly straightforward to adapt these to rectangular subdomains, and there also exist constructions for domains with more complex boundaries.   However, those methods are ineffective when we deal with domains that
are very arbitrary and convoluted.  A particular example of interest is the cortex of the
human brain, which is the place where all the cognitive activity takes place.  
 For such  arbitrary subsets of $\mathbb R^n$, which do not possess a mathematical group structure, tools like the Fourier transform are not available to be utilized in the design.  Fortunately however, very flexible frameworks such as the the lifting scheme \cite{sweldens96, sweldens_siam} are available to construct wavelet bases on these domains directly in the spatial domain.  The wavelets obtained by such methods are no longer the translated and dilated copies of a prototype, and they are called the \emph{second generation wavelets}.  The second generation wavelets possess many of the appealing features of the first generation wavelet transforms, such as leading to sparse and localized representations, and the availability of fast transform algorithms. 

\subsubsection{Some prior applications of second generation wavelets}
One of the first constructions of biorthogonal wavelets on triangulated surfaces was by Launsbery et al. \cite{lounsbery94}.  This scheme was later improved by means of the lifting scheme, which was published by Sweldens \cite{mallat09, sweldens96, sweldens_siam}. 

Lifting-based wavelets on nonstandard grids have been used in numerous areas; an example is the construction in \cite{sweldens95}, of wavelet bases on the sphere, using an adaptive subdivision scheme.  Spherical wavelets have applications in astronomical and geophysical data analysis. 

Other examples of lifting-based wavelets have applications in computer graphics, where they can be used to efficiently represent meshes. In  \cite{khodakovsky04}, they are used in compression of three dimensional meshes. In \cite{wei01}, an application in texture synthesis over arbitrary manifolds is given. 
 
In \cite{cohen92, dahmen97, vasilyev00, griebel07}  wavelets of this type are used for numerical solutions of partial differential equations.  

An application in computational digital photography is given in \cite{fattal09}; in this paper a wavelet basis that is customized to a given image is constructed by the lifting scheme, which has wavelets and scaling functions that avoid edges, as much as possible.   These wavelets are then successfully applied to problems of dynamic range compression, edge preserving smoothing, detail enhancement and image colorization. 

A neuroimaging application is given in \cite{Yuetal07}, where it is applied to capturing shape variations of the cortex and in tracking cortical folding changes in newborns. 

For a review of the theory of second generation wavelets and a survey of other applications, we refer to \cite{jansen2005second}.
\subsubsection{Functional brain imaging}
Functional brain imaging refers to acquiring time series of three-dimensional images of the human brain, which measure some correlate of the neural activity.  This is different from anatomical (or structural) imaging in which the measured quantity is chosen to obtain as much contrast as possible to differentiate between different tissue types.

Functional magnetic resonance  imaging (fMRI) has become the most widely used functional brain imaging
technology in neuroscience research.  It has led to an explosion in papers aiming to
understand the organization of the human brain. Earlier
fMRI studies
mainly concentrated on determining the regions where the active component peaked, and were therefore not overly concerned about  losing lower-amplitude information before
applying spatial low pass
filtering to the data to improve the statistical power \cite{beyond_mind}.
However, later studies have demonstrated that fMRI signal
contains much additional detail information. For instance, the work by J. Haxby and his collaborators \cite{haxby} showed, in an experiment measuring discrimination between different stimuli, that enough information was present, even after removal of the main activation area for each stimulus, to allow for successful stimulus classification.  This
motivates the use of tools like wavelet analysis to detect 
fine details that might be less apparent and that could be lost to thresholding in the spatial domain.  However, the standard
wavelet transforms are mostly designed to have very special domains, like a rectangle in two dimensions or a cube in
three dimensions.  The activity of interest in the brain, on the other hand, occurs on the cortex, which is an intricately convoluted set
in three dimensions.   Using standard wavelets without any modification will have the obvious drawback of mixing signal from the cortex (gray matter) with the signal coming from the off-cortex regions.  There would be an artificial edge at the boundary of the cortex, which would inflate the detail coefficients in the wavelet transform, reducing its ability to form a sparse representation of the data. 

\subsubsection{Domain-adapted wavelets}
In this thesis, mainly motivated by the functional brain imaging problem, we use the lifting scheme to design wavelets on arbitrary two and three dimensional domains.  
One of the key steps in the construction of the wavelets  is to define a nested family of partitions on the domain.  This
includes parent, child, neighbor and sibling relations.  Our algorithm of defining these structures has an element of randomness in it, which results in getting multiple sets of wavelet bases, each coming from a different realization of the partitioning.  This in turn allows us to repeat
the analysis with many different realizations of the wavelet bases and
averaging the results, a method that improves the power of the analysis. 
An example of a nonstandard domain, a random partitioning on it and an example wavelet and scaling function (to be explained in the next chapter) is displayed in Figure \ref{fig:wavelet_and_scaling}.

\begin{figure}
\begin{center}
\subfigure[]{{\includegraphics[width=10cm]{./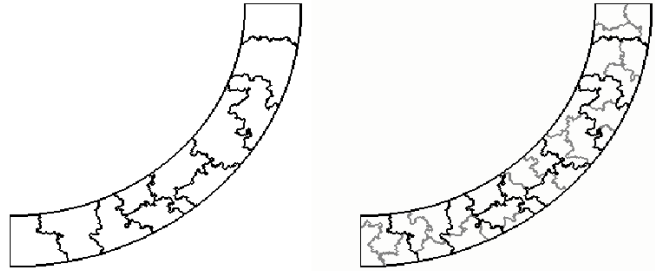}}}\\
\subfigure[]{{\includegraphics[width=7.5cm]{./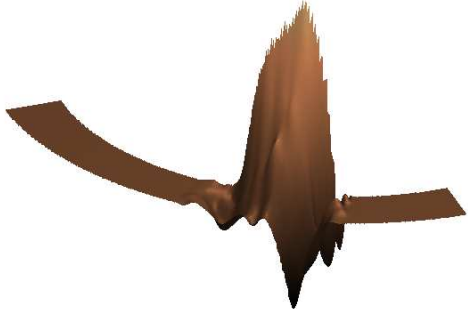}}}
\subfigure[]{{\includegraphics[width=7.5cm]{./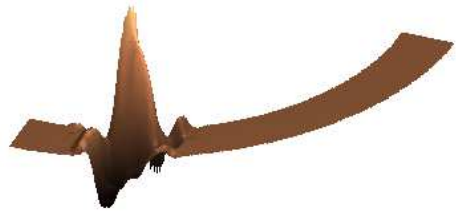}}}
\end{center}
\caption{Illustration of partitioning and adapted wavelets on an annular shaped domain. (a) Sample random partitioning of two levels. (b) Scaling function. (c) Wavelet}
\label{fig:wavelet_and_scaling}
\end{figure}

\subsubsection{Wavelet-based statistical parametric mapping}
One of the basic objectives in an fMRI study is to detect whether a given location point (voxel) is responding to the given stimulus or not.  The question can be answered by means of statistical tests that are applied to the corresponding time series.  However, these statistical tests always have the chance of a positive result when there was no response (false positive), and a negative result when there actually was some response (false negative).   In an fMRI image, there is a large number of voxels, and the statistical test needs to be performed for each of these voxels. This means that there is a large number of decisions to make about the absence or presence of response.  Due to the noisy nature of the fMRI data, some spatial filtering is necessary in order to have control over the total number of false positives, while not losing the sensitivity of the test altogether.  Statistical parametric mapping (SPM) \cite{hbf2}, which is one of the most widely used methods, uses a Fourier-based low pass filtering as a means of reducing noise.  However this has the drawback of destroying fine-scale spatial details.  An alternative is a  wavelet-based framework is due to Van De Ville et al. \cite{surfing, vandeville04,
vandeville0406, vandeville07}, which includes theoretical bounds on the false positive rate, when the null hypothesis is true in the region of interest.  This framework has no restrictions on the type of wavelet basis to be used, so it is suitable to use the domain adapted wavelets that are constructed so as to be spatially adapted to the cortex of the human brain. 

\subsubsection{Organization of the thesis}

The organization of the thesis will be as follows. In Chapter 2 we will study the preliminaries about multiresolution analysis, wavelet transforms, and the lifting scheme.  Chapter 3 will be about the methodology of construction of the adapted wavelets and  will also give some numerical experiments.  In Chapter 4 we will give results about the application of this type of wavelets in statistical analysis of fMRI data, in the wavelet-based statistical parametric mapping framework.  Chapter 5 contains a summary and conclusion. 
    
\chapter{Wavelets and the lifting scheme}  
In this chapter, we  review the basics of wavelet theory, as well as of the lifting scheme,  a method that can be used to design wavelets on irregular domains, without relying on tools from Fourier analysis. It is due to Wim Sweldens \cite{sweldens_siam, sweldens96}. 

Our presentation and notation will mostly follow \cite{sweldens_siam}. A generalized language for wavelets will be used, which also covers the classical (first generation) wavelets as a special case. 

\section{Wavelet bases and wavelet transforms}

In a general setting, a wavelet basis is a collection of functions 
\[
\label{wdef} 
 \left\{\psi_{j,m}(x)\colon j\in\mathcal J, m \in\mathcal M(j) \right\},
\]
where $\mathcal J\subset \mathbb Z$ is the index set for scale (frequency), and for each $j$, $\mathcal M(j)$ is the index set for location. The collection enables us to expand a given function $f$ as

\[
\label{wavelet_expansion}
  f = \sum_{j\in\mathcal J,\  m \in\mathcal M(j)}\gamma_{j,m}\psi_{j,m},
\]
which induces the representation of the function $f$ with a sequence of numbers $(\gamma_{j,m})$
\begin{align*}
f \mapsto (\gamma_{j,m}),
\end{align*}
  and this mapping is called a \emph{wavelet transform}.   
  
  In a good wavelet transform, only a small fraction of the coefficients suffices to approximate the function $f$ whenever it belongs to a  signal class of interest,  and a local change in the signal only affects a small number of corresponding coefficients.  These make wavelets optimal bases in compression, denoising and estimation applications \cite{sweldens_siam, donoho93, mallat09}.

\subsection{First generation wavelets}
In the classic setting, a wavelet basis is obtained by translating and dilating (up to a normalizing constant) a single template function, which is called the \emph{mother wavelet}.  These type of wavelets are referred as the \emph{first generation wavelets}. For the one-dimensional case, the domain is the infinite real line $\mathbb R$,  the scale index set is $\mathcal J = \mathbb N$, and for $j \in\mathcal J$, the location index set is $\mathcal M(j) = 2^{-j}\mathbb Z$.  Given a mother wavelet $\psi(x)$, the basis element $\psi_{j,m}$ is defined to be
$$
\psi_{j,m}(x) = C_j\psi(2^j(x-m)),
$$
where $C_j$ is the normalizing constant that ensures $\|\psi_{j,m}\|=1$. 

In order to obtain a desirable wavelet basis, the mother wavelet should either be compactly supported, or have most of its energy be contained in a compact domain. 

\subsection{Examples}
\begin{itemize}
\item The Haar  basis is the earliest known example of a wavelet basis. It is generated by the Haar mother wavelet function
 $$
  \psi(x) =   \left\{\begin{array}{rc}1 & \text{if } 0\leq x  < 1/2 \\
  -1 & \text{if } 1/2\leq x \leq 1\\
  0 & \text{elsewhere}\end{array}\ \ \ . \right.
  $$
 Haar wavelets form an orthonormal basis and they are compactly supported. Their main drawback is that they are not smooth, which results in wavelet representations that are not very sparse, contrary to what was desired. 

\item The Meyer wavelet is one of the first constructions of a smooth mother wavelet function that generates an \emph{orthogonal basis}, but it is not compactly supported, which is a drawback for computational purposes. 

\item The Daubechies wavelet family comes next chronologically, it consists of wavelet bases that are both smooth, compactly supported and orthogonal.

\item There are wavelets that form a \emph{biorthogonal} basis (defined in Subsection \ref{subsec:bases_review}), one of which is illustrated in Figure \ref{wavelets}, along with the above examples. 
\end{itemize}

 \begin{figure}
  \centering
  \includegraphics[width=15 cm]{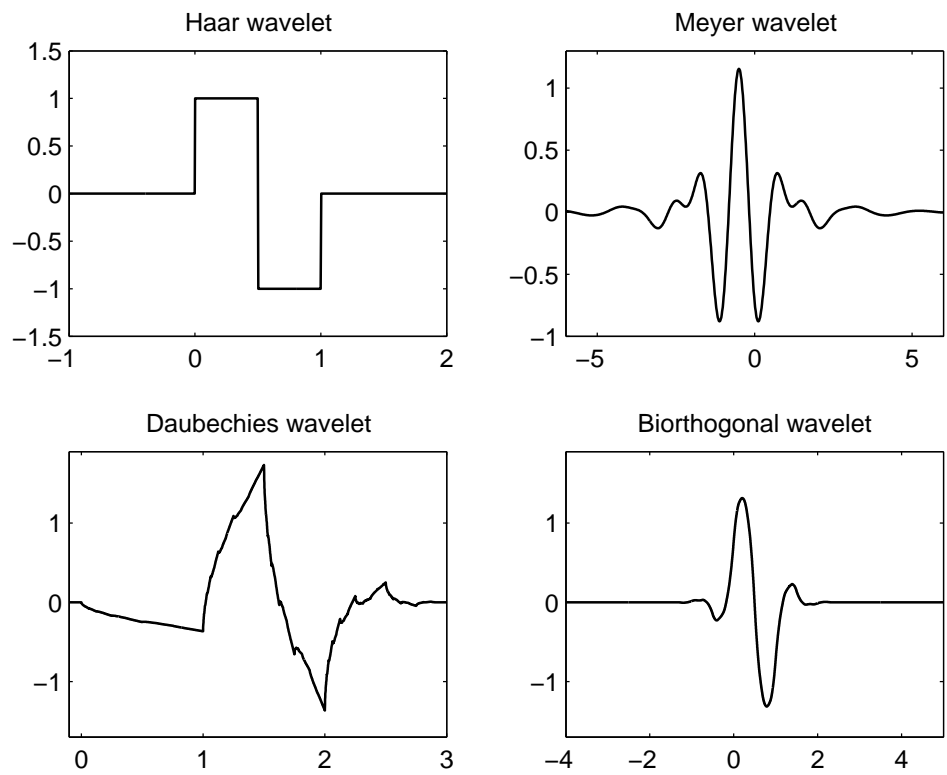}
  \caption{Examples of mother wavelets, belonging to some of the most commonly used wavelet bases: Haar, Meyer, Daubechies and Biorthogonal (average interpolating) wavelets. }
  \label{wavelets}
  \end{figure}
\section{Multiresolution analyses and fast wavelet transforms}
For the wavelet expansion 
\[
\label{wavelet_expansion}
  f = \sum_{j\in\mathcal J, \ m \in\mathcal M(j)}\gamma_{j,m}\psi_{j,m},
\]
orthogonal and biorthogonal wavelets have the relation 
\[
 \gamma_{j,m} = \langle f, \tilde \psi_{j,m} \rangle,
\]
that makes the computation of the coefficient functionals possible with a simple expression. However taking this inner product is computationally very slow.

Many of the wavelet bases, on the other hand,  are implicitly tied to a structure formed by a multilayered sequence of subspaces, which is called a \emph{multiresolution analysis}. 

A multiresolution analysis brings about a much faster way of computing the wavelet coefficients, which is called the \emph{fast wavelet transform} or the \emph{cascade algorithm}.

\subsection{Multiresolution analysis}
\label{subsec:multiresolution}
\begin{definition} A sequence of closed subspaces $\mbf M = \{V_j:j\in\mathcal J\subset \mathbb Z\}$ of $L^2(X,\Sigma,\mu)$  for some measurable space $X\subset\mathbb R^n$ is
  called a \emph{multiresolution analysis}  if it satisfies the following:
  \begin{enumerate}
  \item $V_j \subset V_{j+1}$
  \item $\bigcap_{j}V_j = \{0\}$
  \item $\overline{\bigcup_jV_j} = L^2(X,\Sigma,\mu)$
  \item For each $j\in\mathcal J$, there exists a Riesz basis $\{\varphi_{j,k}:j\in\mathcal K(j)\}$ that spans $V_j$. The functions \((\varphi_{j,k})\) are called \emph{scaling functions}. 
  \end{enumerate}
  \end{definition}
  Here, $\mathcal K(j)$ is an index set indicating location, and in general it is assumed that $\mathcal K(j) \subset \mathcal(K(j+1))$.  The scale index $\mathcal J$ is taken to be $\mathcal J = \mathbb Z$ when $X$ is an unbounded set of infinite measure,  and $\mathcal J= \mathbb N$ when $X$ is bounded and of finite measure (which is the case for the focus of this thesis).  

\subsubsection{Dual multiresolution analysis}
Let  $\mbf M = \{V_j:j\in\mathcal J\subset \mathbb Z\}$ be a multiresolution analysis. Then, another multiresolution analysis  
\[
\widetilde{\mbf M} = \{\widetilde V_j:j\in\mathcal J\subset \mathbb Z\}
\]
is  called a \emph{dual multiresolution analysis} of $\mbf M$, if its scaling functions $(\tilde \varphi_{j,k})$ satisfy 

\begin{equation}
\label{eqn:scaling_biorth}
\langle\varphi_{j,k},\tilde\varphi_{j,k'} \rangle = \delta_{k,k'} \ \ \ \text{ for } k,k'\in\mathcal K(j).
\end{equation}

For any given function $f\in V_j$, the coefficients in the expansion 
\[
 f = \sum_{k\in\mathcal K(j)}\lambda_{j,k} \varphi_{j,k},
\]
 can be obtained by
\[
 \lambda_{j,k} =\langle f , \tilde\varphi_{j,k} \rangle.
\]

\begin{remark}
Note that, according to this definition, a dual multiresolution is not unique.  We also note that, since $(\varphi_{j,k})$ is a Riesz basis for $V_j$, it also has a dual basis \emph{within} $V_j$ that is biorthogonal to it, as mentioned in Subsection \ref{subsec:bases_review}. However a set of dual scaling functions that are denoted by $(\tilde\varphi_{j,k})$ here are not necessarily contained in $V_j$, and they do not necessarily coincide with the \emph{dual Riesz basis} of $(\varphi_{j,k})$ within $V_j$.
\end{remark}

\subsection{Refinement equation, finite filters and set of partitionings}
\label{subsec:ref_eqn}
Because of the inclusion $V_j \subset V_{j+1}$,  $\varphi_{j,k}$ can be written as a linear combination of $(\varphi_{j+1,k})$'s:
\begin{equation}
\label{eqn:refinement}
 \varphi_{j,k} = \sum_{l\in\mathcal K(j+1)}h_{j,k,l}\varphi_{j+1,l},
\end{equation}
for any $j\in\mathcal J$ and $k\in\mathcal K(j)$. This is a relation that must be satisfied in any multiresolution analysis.  On the other hand, we shall see that a multiresolution analysis will be uniquely determined once the coefficients $(h_{j,k,l})$ are defined.  In order for this to be possible, we need two definitions. One is the concept of a \emph{finite filter}, which puts some constraints on the coefficients $(h_{j,k,l})$ for them to result in a well defined multiresolution analysis.  The other is the \emph{set of partitionings}, using which one will be able to pass from the finite filter to scaling functions in $L^2(X,\Sigma, \mu)$. 

\subsubsection{Finite filters}
\begin{definition}
 A set of real numbers $\{h_{j,k,l}\colon j\in\mathcal J,k\in\mathcal K(j), l\in\mathcal K(j+1)\}$ is called a \emph{finite filter} if the following are satisfied.
 \begin{enumerate}
  \item For each $j$ and $k$, $h_{j,k,l}$ is nonzero for only finitely many $l$'s.  Hence, the set defined as
 \begin{equation*}
    \mathcal L(j,k) = \{l\in\mathcal K(j+1) |h_{j,k,l}\neq 0\}
  \end{equation*}
is finite. 
    \item For each $j$ and $l$, $h_{j,k,l}$ is nonzero for only finitely many $k$'s.  Hence, the set defined as
  \[
   \mathcal K(j,l) = \{k\in\mathcal K(j) |h_{j,k,l}\neq 0\}
  \]
is finite. 
\item
The sizes of sets $\mathcal L(j,k)$ and $\mathcal L(j,l)$ are uniformly bounded over all $j,k$ and $l$.   
 \end{enumerate}
\end{definition}

\subsubsection{Dual filter}
We note that, for the dual scaling functions, there also exists a refinement relation
\begin{equation}
\label{eqn:dual_refinement}
 \tilde\varphi_{j,k} = \sum_{l\in\mathcal K(j+1)}\tilde h_{j,k,l}\tilde\varphi_{j+1,l},
\end{equation}
and using (\ref{eqn:scaling_biorth}), (\ref{eqn:refinement}) and (\ref{eqn:dual_refinement}), one obtains that the filter $(h_{j,k,l})$ and the dual filter $(\tilde h_{j,k,l})$ must satisfy the relation
\begin{equation}
\label{eq:h_biorth}
 \sum_{l} h_{j,k,l}\tilde h_{j,k',l} = \delta_{k,k'}, \ \  \text{ for } j\in\mathcal J, \ \ k,k'\in\mathcal K(j).
\end{equation}

\subsubsection{Nested set of partitionings}
In order to be able to define scaling functions on $L^2(X,\Sigma,\mu)$, the next structure that is needed is a \emph{set of partitionings.} In the following definition, we assume that $\mathcal K(j) \subset \mathcal K(j+1)$. 
\begin{definition}
\label{defn:partitionings}
 A \emph{nested set of partitionings} is a collection $\{S_{j,k}\colon j\in\mathcal J, k\in\mathcal K\}$ of subsets of $X$ that satisfy
 \begin{enumerate}
  \item For all $j \in \mathcal J$, the collection $\{S_{j,k}\colon k\in \mathcal K(j)\}$ is disjoint and $\displaystyle X=\overline{\bigcup_{k\in\mathcal K(j)}S_{j,k}}$,
  \item $S_{j+1,k}\subset S_{j,k}$,
  \item For all $j\in \mc J$, and $k\in \mc K(j+1)$, there exists a $k' \in \mc K(j)$ such that  $S_{j+1,k}\subset S_{j,k'}$, 
  \item For a fixed $k \in \mathcal K(j_0)$, the infinite intersection $\displaystyle\bigcap_{j>j_0}S_{j,k}$ is a single point set, whose unique element will be denoted by $x_k$. 
   \end{enumerate}
\end{definition}
\subsection{Synthesizing the scaling functions with the cascade algorithm}
\label{subsec:scaling_fun_cascade}
Having defined a set of partitionings $\{S_{j,k}\colon j\in\mathcal J, k\in\mathcal K(j)\}$, and a finite filter $\{h_{j,k,l}\colon j\in\mathcal J, k\in\mathcal K(j), l\in\mathcal K(j+1)\}$, one is now ready to synthesize the scaling function $\varphi_{j_0,k_0}$.  Initially, setting $(\lambda_{j_0,k})$ to a Kronecker sequence as
\[
 \lambda_{j_0,k} = \delta_{k,k_0}, \text{ for } k\in\mathcal K(j),
\]
we define the collection of sequences $(\lambda_{j,k})_{k\in \mathcal K(j)}$ for $j>j_0$, using the following recursive formula:
\[
 \lambda_{j+1,l} = \sum_{k\in\mathcal K(j,l)}h_{j,k,l}\lambda_{j,k}.
\]
Then a sequence of functions $\left(f^j_{j_0,k_0}\right)_j$ for $j\geq j_0$ is defined as
\[
 f^j_{j_0,k_0} = \sum_{k\in\mathcal K(j)}\lambda_{j,k}\chi_{S_{j,k}},
\]
which also satisfies
\begin{equation}
 \label{f_refinement}
 f^j_{j_0,k_0} = \sum_{l\in \mathcal K(j_0+1)}h_{j_0,k_0,l}f^j_{j_0+1,l}.
\end{equation}
Now the function $\varphi_{j_0,k_0}$ is defined to be
\[
 \varphi_{j_0,k_0}=\lim_{j\to\infty} f^j_{j_0,k_0},
\]
assuming the limit exists in $L^2$. 
\subsection{Wavelets}
\label{subsec:wavelets}
Given a multiresolution analysis $\mbf M = \{V_j:j\in\mathcal J\subset \mathbb Z\}$, and a dual $\widetilde{\mbf M} = \{\widetilde V_j:j\in\mathcal J\subset \mathbb Z\}$, the \emph{wavelet subspace} $W_j$ is the complement of $V_j$ in $V_{j+1}$ which is orthogonal to $\widetilde V_j.$ The \emph{wavelets} $\psi_{j,m}$ are the functions that span $W_j$.
\begin{definition}
\label{defn:wavelets}
 Let $\mathcal M(j) = \mathcal K(j+1)\setminus \mathcal K(j)$ be the index set complementing $\mathcal K(j)$ in $\mathcal K(j+1)$, and let $\{\psi_{j,m}\colon j\in\mathcal J,  m\in\mathcal M(j)\}\subset V_{j+1}$ be a collection of functions, and $W_j$ be the closure of its span. Then the collection $\{\psi_{j,m}\colon m\in\mathcal M(j) \}$ is a set of \emph{wavelet} functions if the following are satisfied. 
 \begin{enumerate}
  \item  $V_{j}\bigcap W_j = \{0\}$ and $W_{j} \perp \widetilde V_{j}$. 
  \item If $\mathcal J = \mathbb Z$, the set $\{\psi_{j,m}\colon j\in\mathcal J, m\in\mathcal M(j) \}$ is a Riesz basis for $L^2$. 
  
  If $\mathcal J = \mathbb N$, the set $\{\psi_{j,m}\colon j\in\mathcal J, m\in\mathcal M(j) \}\bigcup \{\varphi_{0,k}/{\|\varphi_{0,k}\|}\colon k\in\mathcal K(0)\}$ is a Riesz basis for $L^2$.
   \end{enumerate}
\end{definition}
The Riesz bases mentioned above will be referred as \emph{wavelet bases} from now on. 
\subsubsection{Dual wavelets}
Given a wavelet basis $(\psi_{j,m})$, there exists a corresponding dual Riesz basis (as explained in Subsection \ref{subsec:bases_review}), whose elements will be denoted by $(\tilde \psi_{j,m})$, and for a given $j$, the closed span of $(\tilde \psi_{j,m})$ will be denoted by $\widetilde W_j$.  The dual wavelets satisfy
\[
 \langle \psi_{j,m},\tilde \psi_{j',m'} \rangle = \delta_{m,m'}\delta_{j,j'},
\]
by their definition.  The dual wavelet subspaces also satisfy $\widetilde W_j \bigcap \widetilde V_j = \{0\}$ and   $\widetilde W_j \perp V_j$.

The dual wavelets can also be defined by reversing the roles of $\mbf M$ and $\widetilde{\mbf M}$, in Definition \ref{defn:wavelets}.
\subsection{Sets of biorthogonal filters}
\label{subsec:biorthogonality}
The finite filters $(h_{j,k,l})$ and $(\tilde h_{j,k,l})$ were defined in Subsection \ref{subsec:ref_eqn}.  These filters take place in the refinement equations 
\[
 \varphi_{j,k} = \sum_{l\in\mathcal K(j+1)}h_{j,k,l}\varphi_{j+1,l},
\]
and 
\[
 \tilde\varphi_{j,k} = \sum_{l\in\mathcal K(j+1)}\tilde h_{j,k,l}\tilde \varphi_{j+1,l}.
\]

Now similarly, since $\psi_{j,m}\in V_{j+1}$ and $\tilde \psi_{j,m} \in \widetilde V_{j+1}$, we can define the filters $g_{j,m,l}$ and $\tilde g_{j,m,l}$ to be the filters that satisfy 
\[
 \psi_{j,m} = \sum_{l\in\mathcal K(j+1)}g_{j,m,l}\,\varphi_{j+1,l},
\]
and 
\[
 \tilde\psi_{j,m} = \sum_{l\in\mathcal K(j+1)}\tilde g_{j,k,l}\,\tilde \varphi_{j+1,l}.
\]

The functions $(\varphi_{j,k})$, $(\tilde \varphi_{j,k})$, $(\psi_{j,m})$ and $(\tilde\psi_{j,m})$ satisfy the biorthogonality relations 
\begin{equation}
 \label{eqn:func_biorth}
 \begin{array}{lcl}
 \langle \varphi_{j,k},\tilde \varphi_{j,k'} \rangle &=& \delta_{k,k'},\\
 \langle \psi_{j,m},\tilde \psi_{j,m} \rangle &=& \delta_{m,m'},\\
\langle \varphi_{j,k},\tilde \psi_{j,m} \rangle &=& 0,\\
\langle \psi_{j,m},\tilde \varphi_{j,k} \rangle &=& 0.\\
 \end{array}
\end{equation}
As a consequence of the above relations, the corresponding filters must satisfy

\begin{equation}
\label{eqn:biorthogonal_filters}
\begin{array}{rcl}
\disp \sum_{l\in\mathcal K(j+1)}\, h_{j,k,l}\tilde h_{j,k',l} & = & \delta_{k,k'},  \\
\disp \sum_{l\in\mathcal K(j+1)}\, g_{j,m,l}\tilde g_{j,m',l} &=& \delta_{m,m'} ,  \\
\disp \sum_{l\in\mathcal K(j+1)}\, g_{j,m,l}\tilde h_{j,k,l} &=& 0  ,\\
\disp \sum_{l\in\mathcal K(j+1)}\, h_{j,k,l}\tilde g_{j,m,l} &=& 0.
\end{array}
\end{equation}

\begin{definition}
 A set of finite filters $\{h,\tilde h, g, \tilde g\}$ is called a set of {\rm\emph{biorthogonal filters}}, if the relations in {\rm(\ref{eqn:biorthogonal_filters})} are satisfied. 
\end{definition}

\begin{remark}
 The collection of wavelet and scaling functions that satisfy {\rm\eqref{eqn:func_biorth}} results in finite filters that satisfy {\rm(\ref{eqn:biorthogonal_filters})}.  But also, thinking in the reverse direction, if one starts with a set of biorthogonal filters, one can obtain the wavelet and scaling functions using the cascade algorithm of Subsection {\rm\ref{subsec:scaling_fun_cascade}}, and they will have the desired biorthogonal relations of {\rm(\ref{eqn:func_biorth})}.  The purpose of the lifting scheme is actually to design a set of biorthogonal filters and make succesive improvements on them without disturbing their biorthogonality. 
\end{remark}

\subsection{Fast wavelet transforms}
The fast wavelet transform is an  efficient algorithm that finds the expansion coefficients in the $N$-level wavelet expansion  
\begin{equation}
\label{eqn:fast_wavelet}
f = \sum_{k\in\mathcal K(0)}\lambda_{0,k}\,\varphi_{0,k} + \sum_{j = 0}^{N-1}\sum_{m\in\mathcal M(j)} \gamma_{j,m}\, \psi_{j,m},
\end{equation}
for a positive integer $N$ and  $f \in V_{N}$, by exploiting the refinement relations.  

If we take a function $f$ in $V_N$, it can be written as a linear combination of the scaling functions of $V_N$ as
\begin{equation}
\label{eqn:finest_level_expansion}
 f = \sum_{k\in\mathcal K(N)}\lambda_{N,k}\,\varphi_{N,k}.
\end{equation}
At the beginning of a fast wavelet transform, it is assumed that the scaling function coefficients $(\lambda_{N,k})_k$ at this finest level  are given.  Since $W_{N-1}$ is a complement of $V_{N-1}$ in $V_{N}$, $f$ can be written as $f = f_a + f_d$ with $f_a \in V_{N-1}$ and $f_d\in W_{N-1}$.  If we expand $f_a$ and $f_d$ in terms of the scaling functions and wavelets, we get
\begin{align*}
 f &= \sum_{k\in\mathcal K(N)}\lambda_{N,k}\,\varphi_{N,k}\\
 &=  \sum_{k\in\mathcal K(N-1)}\lambda_{N-1,k}\,\varphi_{N-1,k} + \sum_{m\in\mathcal M(N-1)}\gamma_{N-1,m}\,\psi_{N-1,m}.
\end{align*}
The key observation at this point is that the coefficient sequences $(\lambda_{N-1,k})_k$ and $(\gamma_{N-1,m})_m$ can be obtained by applying filters $\tilde h$ and $\tilde g$  to $(\lambda_{N,k})_k$, respectively. That is to say, the relations
\begin{align*}
 \lambda_{N-1,k} = \sum_{l\in\mathcal K(N)}\tilde h_{N,k,l}\,\lambda_{N,l}
\end{align*}
and
\begin{align*}
 \gamma_{N-1,k} = \sum_{l\in\mathcal K(N)}\tilde g_{N,k,l}\,\lambda_{N,l}
\end{align*}
can be obtained as a result of the biorthogonality relations.  Recursive application of this relation to the $(\lambda_{N-i,k})$ after each step results in (\ref{eqn:fast_wavelet}).

The inverse transform of the above step is given by
\begin{align*}
 \lambda_{N,l} = \sum_{k\in\mathcal K(N-1)} h_{N-1,k,l}\,\lambda_{N-1,k}
+
 \sum_{m\in\mathcal M(N-1)} g_{N-1,m,l}\,\gamma_{N-1,m}.
\end{align*}

We recall that, in the present setting, it is assumed that all the summations in the above equations are finite, since the filters are assumed to be finite. 
\section{The lifting scheme}
The lifting scheme is the framework through which one can obtain new biorthogonal filters from old ones, and tailor them according to the desired smoothness and vanishing moment properties of the corresponding scaling functions and wavelets.  
\subsection{Operator notation}
Continuing to follow \cite{sweldens_siam}, we will introduce the operator notation corresponding to the filters of the previous section. The operator notation will allow a much compact expression for the lifting scheme. 

First we will give a general definition for an operator $H$ and its adjoint $H^*$, corresponding to a filter $(h_{k,l})$.  Then the definitions specific to present case will follow. 

Let $\mathcal I_1$ and $\mathcal I_2$ be two index sets that are at most countable, and let $h = \{h_{k,l}\colon k\in\mathcal I_2, l\in\mathcal I_1\}$ be a filter with real coefficients.  Let $\ell^2(\mathcal I_1)$ and $\ell^2(\mathcal I_2)$ be the spaces of the square summable, real  valued discrete functions, defined on the index sets.
The operator $H$ corresponding to the filter $(h_{k,l})$ is  be defined to be
\begin{align}
\label{eqn:operator}
  H\colon & \ell^2(\mathcal I_1) \rightarrow  \ell^2(\mathcal I_2) \notag
\\
& \left(a_{k}\right)_{m\in\mathcal I_1} 
\mapsto \left(\sum_{l\in\mathcal I_1} h_{k,l}a_{l}\right)_{k\in\mathcal I_2}.
\end{align}

The adjoint operator of $H$, which is denoted by $H^*$,  is an operator from $\ell^2(\mathcal I_2)$ to $\ell^2(\mathcal I_1)$ given by
\begin{align}
\label{eqn:operator_adjoint}
  H^*\colon & \ell^2(\mathcal I_2) \rightarrow  \ell^2(\mathcal I_1) \notag
\\
& \left(a_{k}\right)_{m\in\mathcal I_2} 
\mapsto \left(\sum_{k\in\mathcal I_2} h_{k,l}a_{k}\right)_{l\in\mathcal I_1}.
\end{align}

Given an operator $H$, the adjoint operator is also the unique operator that satisfies 
\[
 \langle x, H y \rangle = \langle H^* x, y \rangle
\]
for all $x \in \ell(\mathcal I_1)$ and $y\in\ell(\mathcal I_2)$.  

The above notation will be used for the rest of the section.  Note that, in  (\ref{eqn:operator}), the summation runs over the first index of $(h_{k,l})$, whereas in (\ref{eqn:operator_adjoint}), it runs over the second index. For the case of matrices, the adjoint operator corresponds to the transpose, and the order of indexes for the filters has been set accordingly.

Now, let us fix a level $j \in \mathcal J$.  We have three index sets $\mathcal K(j+1)$, $\mathcal K(j)$ and $\mathcal M(j)$, which satisfy
\[
 \mathcal K(j+1) = \mathcal K(j)\, \cup\, \mathcal M(j). 
\]

Let us consider the spaces $\ell^2\left(\mathcal K(j+1)\right)$, $\ell^2\left(\mathcal K(j)\right)$ and $\ell^2\left(\mathcal M(j)\right)$.  We define the  operators $\tilde H_j$, $\tilde G_j$, $H_j^*$ and $G_j^*$ as follows:
\begin{align*}
\tilde H_j\colon &\ell^2(\mathcal K(j+1)) \rightarrow  \ell^2(\mathcal K(j))
\\
&\left(a_k\right)_{k\in\mathcal K(j+1)} \mapsto \left(\sum_{l\in\mathcal K(j+1)}\tilde h_{j,k,l}\,a_l\right)_{k\in\mathcal K(j)},
\\
&\\
\tilde G_j\colon & \ell^2(\mathcal K(j+1)) \rightarrow  \ell^2(\mathcal M(j))
\\
& \left(a_k\right)_{k\in\mathcal K(j+1)} 
\mapsto 
\left(
  \sum_{l\in\mathcal K(j+1)}\tilde g_{j,m,l}a_{l}
\right)_{m\in\mathcal M(j)},
\end{align*}
and similarly,
\begin{align*}
 H^*_j\colon & \ell^2(\mathcal K(j)) \rightarrow  \ell^2(\mathcal K(j+1))
\\
& \left(a_{k}\right)_{k\in\mathcal K(j+1)} 
\mapsto 
\left(\sum_{l\in\mathcal K(j)} h_{j,k,l}a_{l}\right)_{k\in\mathcal K(j)}
,
\\
&\\
 G^*_j\colon & \ell^2(\mathcal M(j)) \rightarrow  \ell^2(\mathcal K(j+1))
\\
& \left(a_{m}\right)_{m\in\mathcal M(j+1)} 
\mapsto \left(\sum_{l\in\mathcal K(j)} g_{j,k,l}a_{l}\right)_{k\in\mathcal K(j)}
.
\end{align*}

\subsection{Fast wavelet transform and biorthogonality with the operator notation}
Let us denote the sequence $(\lambda_{j,k})_{k\in\mathcal K(j)}$ simply by $\lambda_j$, and similarly $(\gamma_{j,m})_{m\in\mathcal M(j)}$  by $\gamma_j$.  The fast wavelet transform is the process that starts with $\lambda_N$, and continues as
\begin{align*}
 \lambda _N &\mapsto (\lambda_{N-1},\gamma_{N-1})
 \\
 &\mapsto (\lambda_{N-2},\gamma_{N-2},\gamma_{N-1})
 \\
 & \ \ \vdots\\
 &\mapsto (\lambda_{0},\gamma_{0},\gamma_{1},\cdots,\gamma_{N-2},\gamma_{N-1}),
\end{align*}
where 
\[
 \lambda_{j} = \tilde H_{j}\lambda_{j+1} \ \ \text{ and } \ \ \ \gamma_{j} = \tilde G_{j}\gamma_{j+1},
\]
at each step.

The inverse of each step can be obtained by the operators $H^*$ and $G^*$ as
\[
\lambda_{j+1} = H_{j}^*\lambda_{j} + G_{j}^*\lambda_{j}.
\]

The biorthogonality relations (\ref{eqn:biorthogonal_filters}) in Subsection \ref{subsec:biorthogonality} can be written with the operator notation as
\begin{align}
\label{eqn:biorthogonality_op1}
 \tilde G_j H^*_j = \tilde H_j G^*_j = 0 \\
 \label{eqn:biorthogonality_op2}
  \tilde H_j H^*_j = \tilde G_j G^*_j = I,
\end{align}
where $I$ is the identity operator.  From these, it follows that
\[
 \tilde  H^*_j H_j + G^*_j \tilde G_j  = I,
\]
provided the union of the ranges of the operators $H^*_j$ and $G^*_j$ span entire $\ell^2(\mathcal(K(j+1))$.  In finite dimensions, it is enough that $|\mc K(j+1)| = |\mc K(j)| + |\mc M(j)|$, in order for this to be satisfied. 

\subsection{Obtaining a new biorthogonal set of filters from old ones}
\subsubsection{Lifting}
The key point of the lifting scheme is to obtain new biorthogonal filters from old ones without losing the biorthogonality.  
\begin{theorem}
\label{thm:lifting_primal}
 Let ${H^{\text{\rm old}}, \tilde H^{\text{\rm old}}, G^{\text{\rm old}}, \tilde G^{\text{\rm old}}}$ be a set of biorthogonal filter operators.  Then the following gives a new set of biorthogonal filter operators:
 \begin{align*}
  H_j &= H_j^{\,\text{\rm old}},\\
  \tilde H_j &= \tilde H_j^{\text{\rm old}} + S \tilde G_j^{\text{\rm old}},\\
  G_j &= G_j^{\text{\rm old}}- S^*H_j^{\text{\rm old}},\\
  \tilde G_j &= \tilde G_j^{\text{\rm old}},
   \end{align*}
   where $S$ is any operator from $\ell^2(\mc M(j))$ to $\ell^2(\mc K(j))$. 
\end{theorem}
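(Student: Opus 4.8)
The plan is to verify directly that the four biorthogonality relations collected in (\ref{eqn:biorthogonality_op1}) and (\ref{eqn:biorthogonality_op2}), namely
\[
\tilde G_j H_j^* = 0, \qquad \tilde H_j G_j^* = 0, \qquad \tilde H_j H_j^* = I, \qquad \tilde G_j G_j^* = I,
\]
hold for the new operators, knowing that the old operators already satisfy them. The one preliminary step that needs care is computing the adjoint of the lifted operator $G_j$. Since $H_j = H_j^{\text{\rm old}}$ and $G_j = G_j^{\text{\rm old}} - S^* H_j^{\text{\rm old}}$, taking adjoints and using $(S^*)^* = S$ gives
\[
H_j^* = (H_j^{\text{\rm old}})^*, \qquad G_j^* = (G_j^{\text{\rm old}})^* - (H_j^{\text{\rm old}})^* S .
\]

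Three of the four identities then drop out immediately. Because $\tilde G_j$ and $H_j$ are left untouched by the lifting, $\tilde G_j H_j^* = \tilde G_j^{\text{\rm old}} (H_j^{\text{\rm old}})^* = 0$ by the old relation. For $\tilde H_j H_j^*$ I would substitute $\tilde H_j = \tilde H_j^{\text{\rm old}} + S\tilde G_j^{\text{\rm old}}$ and expand, obtaining $\tilde H_j^{\text{\rm old}}(H_j^{\text{\rm old}})^* + S\,\tilde G_j^{\text{\rm old}}(H_j^{\text{\rm old}})^* = I + S\cdot 0 = I$. Similarly, expanding $\tilde G_j G_j^* = \tilde G_j^{\text{\rm old}}\big((G_j^{\text{\rm old}})^* - (H_j^{\text{\rm old}})^* S\big)$ and applying $\tilde G_j^{\text{\rm old}}(G_j^{\text{\rm old}})^* = I$ together with $\tilde G_j^{\text{\rm old}}(H_j^{\text{\rm old}})^* = 0$ yields $I - 0 = I$. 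In each of these, the only new ingredient beyond the old biorthogonality relations is the annihilation of a cross term by one of the old zero relations.

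The identity in which the lifting term $S$ genuinely interacts with itself, and hence the crux of the theorem, is $\tilde H_j G_j^* = 0$. Expanding the product of $\tilde H_j = \tilde H_j^{\text{\rm old}} + S\tilde G_j^{\text{\rm old}}$ with $G_j^* = (G_j^{\text{\rm old}})^* - (H_j^{\text{\rm old}})^* S$ produces four terms. Two of them vanish through the old cross relations $\tilde H_j^{\text{\rm old}}(G_j^{\text{\rm old}})^* = 0$ and $\tilde G_j^{\text{\rm old}}(H_j^{\text{\rm old}})^* = 0$, while the remaining two collapse, via $\tilde H_j^{\text{\rm old}}(H_j^{\text{\rm old}})^* = I$ and $\tilde G_j^{\text{\rm old}}(G_j^{\text{\rm old}})^* = I$, to $-S$ and $+S$, which cancel exactly. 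I expect this cancellation to be the conceptual heart of the argument: it is precisely what forces the synthesis operator to be lifted by $-S^*$ when the dual analysis operator is lifted by $+S$, so that the freedom in choosing $S$ does not disturb biorthogonality. No hypothesis on $S$ enters anywhere, which is exactly why the theorem can permit $S$ to be an arbitrary operator from $\ell^2(\mc M(j))$ to $\ell^2(\mc K(j))$.
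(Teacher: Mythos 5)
Your proof is correct and follows essentially the same route as the paper: direct verification of the four operator biorthogonality relations, with the key four-term expansion of $\tilde H_j G_j^*$ producing the $-S+S$ cancellation. The only difference is that you write out all four identities explicitly, whereas the paper verifies the two relations of (\ref{eqn:biorthogonality_op1}) and dismisses (\ref{eqn:biorthogonality_op2}) with ``similarly.''
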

\begin{proof}
 For  (\ref{eqn:biorthogonality_op1}) we have
 \begin{align*}
 \tilde G_j H^*_j &= \tilde G_j^{\,\text{old}} H_j^{\,\text{old}} = 0,\\
 \tilde H_j G^*_j 
 &= 
      \left( \tilde H_j^{\,\text{old}} + S \tilde G_j^{\,\text{old}}\right) \Big(G_j^{\,\text{old}}- S^*H_j^{\,\text{old}}\Big)^*\\
 &= 
      \left( \tilde H_j^{\,\text{old}} + S \tilde G_j^{\,\text{old}}\right) \Big(G_j^{* \, \text{old}}-  H_j^{* \, \text{old}}S\Big)\\
%
& = 
\tilde H_j^{\,\text{old}}  G_j^{* \, \text{old}}
-\tilde H_j^{\,\text{old}}  H_j^{* \, \text{old}}S
+S \tilde G_j^{\,\text{old}}G_j^{* \, \text{old}}
-S \tilde G_j^{\,\text{old}} H_j^{* \, \text{old}}S\\
&=0 -S + S +0= 0.
\end{align*}
Similarly, one can verify that (\ref{eqn:biorthogonality_op2}) also holds.  
\end{proof}
The modification step to the original operators is called a \emph{lifting step}.  This modification changes $\tilde H$ and $G^*$, but it does not change $H^*$ and $\tilde G$. This implies that, after such a modification, the scaling functions remain the same, but the dual scaling functions and the wavelets change. The dual wavelets also change, since the dual scaling functions change, although the coefficients $(\tilde g_{j,m})$ in their refinement relation stay the same.
\subsubsection{Dual lifting}
There also exists another version of Theorem \ref{thm:lifting_primal}, in which the filters $H^*$ and $\tilde G$ are modified and  $\tilde H$ and $G^*$ remain unchanged, which we call as the \emph{dual lifting}. 
\begin{theorem}
\label{thm:lifting_dual}
 Let ${H^{\text{\rm old}}, \tilde H^{\text{\rm old}}, G^{\text{\rm old}}, \tilde G^{\text{\rm old}}}$ be a set of biorthogonal filter operators.  Then the following gives a new set of biorthogonal filter operators:
 \begin{align*}
  H_j &= H_j^{\,\text{\rm old}} +  R^*G_j^{\text{\rm old}},\\
  \tilde H_j &= \tilde H_j^{\text{\rm old}},\\
  G_j &= G_j^{\text{\rm old}},\\
  \tilde G_j &= \tilde G_j^{\text{\rm old}} - R \tilde H_j^{\text{\rm old}},,
   \end{align*}
   where $R$ is any operator from $\ell^2(\mc K(j))$ to $\ell^2(\mc M(j))$. 
\end{theorem}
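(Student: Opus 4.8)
The plan is to mirror the proof of Theorem~\ref{thm:lifting_primal}: I substitute the four new operator definitions into the biorthogonality relations (\ref{eqn:biorthogonality_op1})–(\ref{eqn:biorthogonality_op2}) and collapse each resulting expression using the relations already satisfied by $\{H^{\text{old}}, \tilde H^{\text{old}}, G^{\text{old}}, \tilde G^{\text{old}}\}$. The bookkeeping is the exact transpose of the primal case: here $\tilde H_j$ and $G_j$ are left untouched while $H_j$ and $\tilde G_j$ are modified, so one relation holds with no work, two further ones collapse after a single cancellation, and only $\tilde G_j H_j^* = 0$ will draw on all four old identities.

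Before substituting, I would record the one adjoint that changes. Since among the operators appearing adjointed only $H_j = H_j^{\text{old}} + R^* G_j^{\text{old}}$ is altered, I compute $H_j^* = H_j^{*\,\text{old}} + G_j^{*\,\text{old}} R$, using $(R^* G_j^{\text{old}})^* = G_j^{*\,\text{old}}(R^*)^* = G_j^{*\,\text{old}} R$; the adjoints $\tilde H_j^* = \tilde H_j^{*\,\text{old}}$ and $G_j^* = G_j^{*\,\text{old}}$ are unchanged. Because $R\colon \ell^2(\mc K(j)) \to \ell^2(\mc M(j))$, the operator $R^*$ lands in $\ell^2(\mc K(j))$, so all compositions below are well typed. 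With these in hand the four checks run as follows. For (\ref{eqn:biorthogonality_op1}), $\tilde H_j G_j^* = \tilde H_j^{\text{old}} G_j^{*\,\text{old}} = 0$ is immediate since neither factor changed, while $\tilde G_j H_j^* = (\tilde G_j^{\text{old}} - R\tilde H_j^{\text{old}})(H_j^{*\,\text{old}} + G_j^{*\,\text{old}} R)$ expands into four terms that reduce, via $\tilde G^{\text{old}} H^{*\,\text{old}} = 0$, $\tilde G^{\text{old}} G^{*\,\text{old}} = I$, $\tilde H^{\text{old}} H^{*\,\text{old}} = I$ and $\tilde H^{\text{old}} G^{*\,\text{old}} = 0$, to $R - R = 0$. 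For (\ref{eqn:biorthogonality_op2}), $\tilde H_j H_j^* = \tilde H_j^{\text{old}}(H_j^{*\,\text{old}} + G_j^{*\,\text{old}} R) = I + 0 = I$, and $\tilde G_j G_j^* = (\tilde G_j^{\text{old}} - R\tilde H_j^{\text{old}})G_j^{*\,\text{old}} = I - 0 = I$. This exhausts all four relations.

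The only thing that demands care — the main obstacle, such as it is — is getting the adjoint $H_j^*$ and the order of the operator products right, since a transposed factor placed on the wrong side would spoil the cancellation $R - R = 0$ in $\tilde G_j H_j^*$. An appealing alternative that sidesteps the computation entirely is to invoke Theorem~\ref{thm:lifting_primal} under the symmetry that swaps the roles of $\mbf M$ and $\widetilde{\mbf M}$: the relations (\ref{eqn:biorthogonality_op1})–(\ref{eqn:biorthogonality_op2}) are invariant under $(H,\tilde H,G,\tilde G)\mapsto(\tilde H, H,\tilde G, G)$ combined with taking adjoints, so applying primal lifting with $S = R^*$ to the swapped filter set and then swapping back reproduces exactly the dual-lifting formulas. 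I would present the direct substitution as the primary argument and note this duality as a sanity check.
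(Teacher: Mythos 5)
Your proof is correct. The paper actually leaves Theorem~\ref{thm:lifting_dual} unproved, presenting it only as the dual counterpart of Theorem~\ref{thm:lifting_primal}; your direct substitution into (\ref{eqn:biorthogonality_op1})--(\ref{eqn:biorthogonality_op2}), with the adjoint $H_j^* = H_j^{*\,\text{old}} + G_j^{*\,\text{old}}R$ computed correctly and the four old identities applied to give $R-R=0$, is exactly the transposed version of the computation the paper carries out for the primal case, and the duality remark is a valid alternative.
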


\subsection{Heuristics for a good discrete wavelet transform}
\label{subsec:heuristics}
Now, we can focus on a  {(single-level) discrete wavelet transform} $T_j$, which maps a given discrete function $\lambda_{j+1}\in\ell^2(\mathcal (K(j+1)))$  into two discrete functions $(\lambda_j, \gamma_j)$, where $\lambda_{j}\in\ell^2(\mathcal (K(j)))$ and $\gamma_{j}\in\ell^2(\mathcal (M(j)))$ as
\begin{align*}
T_j: \ell^2{\mc K(j+1)} &\rightarrow  \ell^2{(\mc K(j))} \times \ell^2{(\mc M(j))}\\
\lambda_{j+1} &\mapsto (\lambda_j, \gamma_j).
\end{align*}
Here, the discrete functions $\lambda_j$ and $\gamma_j$ are interpreted as approximation and detail functions, respectively.  

Now, we can list the properties generally expected from a wavelet transform. In the following, we will implicitly identify the discrete function 
 $(\lambda_j)$, with the corresponding function $f\in L^2(X,\Sigma,\mu)$ given by
\(
 f = \sum_{k}\lambda_{j,k}\varphi_{j,k},
\)
when talking about concepts like \emph{smoothness}. 
\begin{enumerate}[W1:]
 \item \label{wt1} If  $\lambda_{j+1}$ belongs to a class of smooth functions to be specified (most commonly, a class of polynomials up to a certain degree), then the detail function $\gamma_j$ is expected to be exactly zero.
 \item \label{wtapp} $\lambda_{j}$ is expected to be an approximation of $\lambda_{j+1}$, in the sense that setting $\gamma_j =0 $ and taking the inverse of $(\lambda_j, 0)$    under $T_j$ should give an approximation of $\lambda_{j+1}$. 
 \item \label{wtcont} The transform is expected to be stable, i.e., a small change in $\lambda_{j+1}$ should correspond to small changes in $\lambda_j$ and $\gamma_j$, both in the transform and its inverse.
 \item \label{wtlocal} The transform is expected to be local. Changes in a localized area of the function should only affect a corresponding localized region of the transformed functions. 
 \item \label{wtsparse}As a consequence of the above conditions, if $\lambda_{j+1}$ is locally well approximated by the class of smooth functions, then many entries of the detail output $\gamma_j$ will be close to zero. This is the reason for the \emph{sparsity} property of the wavelet transforms, a key point of their success. 
 \end{enumerate}

\subsection{Tailoring a wavelet transform in successive lifting steps}
In order to design a wavelet transform, one can start with a very simple transform, and in successive steps,  improve it to meet the guidelines of Subsection \ref{subsec:heuristics}.  In doing so, Theorems \ref{thm:lifting_primal} and \ref{thm:lifting_dual} will be utilized.

\subsubsection{The lazy wavelet transform}
The lazy wavelet transform simply splits the given signal $\lambda_{j+1}$ into two, i.e. $\lambda_j$ and $\gamma_j$ are just restrictions of $\lambda_{j+1}$ to $\mc K(j)$ and $\mc M(j)$ respectively,
\begin{gather*}
T_j^{\text{lazy}}: \ell^2{\mc K(j+1)} \rightarrow  \ell^2{(\mc K(j))} \times \ell^2{(\mc M(j))} \\
\lambda_{j+1} \mapsto (\lambda_{j+1}\rst{\mc K(j)}, \  \lambda_{j+1}\rst{\mc M(j)}).
\end{gather*}
In other words, the corresponding filters are given by $h^{\text{lazy}}_{j,k,l} = \delta_{k,l}$ and $g^{\text{lazy}}_{j,m,l} = \delta_{m,l}$.  
  We will denote the corresponding filter operators as $\tilde H_j^\text{lazy}$ and $\tilde G_j^\text{lazy}$, and use the ordered pair notation for the operator as 
\[
T_j^{\text{lazy}} = (\tilde H_j^\text{lazy} , \tilde G_j^\text{lazy}). 
\]
It is easy to see that the lazy wavelet transform does not satisfy property W\ref{wt1}. There is no reason to expect that the signal would have values close to zero on the complementary grid $\mc M(j)$, which are typically uniformly scattered through the initial grid $\mc K(j+1)$. However, the lazy wavelet transform is very helpful as an initial step in the lifting framework, to be followed by a sequence of  improvement steps.
\subsubsection{Prediction}
The first improvement to the lazy wavelet transform comes with the help of a prediction operator.  Let $\lambda^{(0)}_{j} = \tilde H_j^\text{lazy}\lambda_{j+1}$ and $\gamma^{(0)}_j = \tilde G_j^\text{lazy}\lambda_{j+1}$ be the outputs of the lazy wavelet transform. The output of the  prediction operator is a function of $\lambda^{(0)}_{j}$ and under the smoothness assumption, it approximates the signal $\gamma^{(0)}_{j}$
\begin{gather*}
P\colon  \ell^2(\mc K(j)) \rightarrow  \ell^2(\mc M(j))\\
P \lambda^{(0)}_{j} \approx \gamma^{(0)}_{j}.
\end{gather*}
The improved transform now becomes
\begin{gather*}
T^{(1)}_j\colon  \ell^2 ({\mc K(j+1)}) \rightarrow  \ell^2 ({\mc K(j)})\times  \ell^2({\mc M(j)})\\
\lambda_{j+1} \mapsto (\lambda^{(0)}_{j}, \gamma^{(0)}_{j} - P\lambda^{(0)}_{j})
\end{gather*}
If the smoothness assumptions are satisfied, and the prediction successfully satisfies  $P \lambda^{(0)}_{j} \approx \gamma^{(0)}_{j}$,  then the desired property W\ref{wt1}, follows immediately, as intended.  The new filter operators are $\tilde H^{(1)}_j = \tilde H^{lazy}_j$ and  $\tilde G^{(1)}_j = \tilde G^{lazy}_j - P H^{lazy}_j$.  The complete set of biorthogonal filters can be obtained by Theorem \ref{thm:lifting_dual}.

\subsubsection{Locality of the Prediction}
In order to have the property W\ref{wtlocal}, we must impose a condition on the prediction operator $P$: the $m$th entry of $P\lambda^{(0)}_{j}$ should depend only on the \emph{neighbors} of the $m$th element of the complimentary grid $\mc M(j)$, noting that $\mc K(j) \bigcup \mc M(j) = \mc K(j+1)$.   Without this condition, the transform would not be local, and would not possess the advantages of wavelet transforms. 

\subsubsection{The Update Step}

At this point, we have a  transform  $T^{(1)}$ that satisfies property W\ref{wt1}. However, there is something unsatisfying about the \emph{approximation output} $\lambda^{(1)}_{j}:= \tilde H^{(1)}_j \lambda_{j+1}$, which is still simply  a restriction of $\lambda_{j+1}$ to the subgrid $\mc K(j)$.  This constitutes a violation of property W\ref{wtapp}.  In order to see this, one can take an extreme example for $\lambda_{j+1}$, such as
\[       \lambda_{j+1,k} 
= \left\{\begin{array}{rcl}  
0 &\text{if}& k\in \mc K(j) \\
1 &\text{if}& k\in\mc M(j)
\end{array}  
,\right.  
\] 
whose reconstruction after setting the detail coefficients $\gamma_j$ to $0$ will result in the $0$ function, which is obviously not a good approximation for the initial $\lambda_{j+1}$. 
 As a result, a new step is required, which will ensure that the local averages of the function reconstructed with $\lambda_{j}$ will match those of $\lambda_{j+1}$.  It can be accomplished by means of an operator $U$ that is local,  and acts on the detail output $\gamma^{(1)}_{j}:= \tilde G^{(1)}_j \lambda_{j+1}$ of the previous step. The new operators will be $\tilde H^{(2)}_j : = \tilde H^{(1)}_j + U \tilde G^{(1)}_j$, and $\tilde G^{(2)}_j := \tilde G^{(1)}_j$. The condition that is desired to be satisfied by the output $\lambda^{(2)}_{j}$ of $\tilde H^{(2)}_j$ is to  preserve the weighted sum of the coefficients as

\[
\sum_{k\in\mc K(j+1)} \lambda_{j+1,k}\,\mu(S_{j+1,k})=\sum_{k\in\mc K(j)} \lambda^{(2)}_{j,k}\,\mu(S_{j,k}).
\]

The modifications to get a complete set of biorthogonal filters is given by Theorem \ref{thm:lifting_primal}.  Note also that the update step for the primal filters corresponds to a prediction step for the dual filters. 
\vspace{.6cm} 

\noindent There can be as many ``predict'' and ``update'' steps  as one desires, to be added with similar rules.  However, in general additional steps with improved predictions requires the use of filters with a higher number of nonzero coefficients.  This, in turn, results in less localized wavelets and scaling functions. 
\subsubsection{}
In this chapter, we gave an overview of wavelets and the lifting scheme, a flexible framework that enables  designing wavelets on unstructured domains.  This chapter was essentially a preparation for the next chapter, in which we give a concrete  construction that works for arbitrary subsets of $\mathbb R^2$ and $\mathbb R^3$.

\newpage

\section{Appendix: Orthogonal bases, Riesz bases and frames}
\label{subsec:bases_review}
\subsection{Riesz bases and orthogonal bases}
In a Hilbert space \(\mathcal H \), a  basis $(x_n)$ is called a \emph{Riesz basis} if there exist two constants $A$ and $B$ such that
\begin{equation}
\label{eqn:riesz_basis}
A\sum{|a_n|^2}\leq \left\| \sum{a_n x_n}\right\|^2 \leq  B\sum{|a_n|^2}, 
\end{equation}
for all scalar sequences $(a_n)$. $A$ and $B$ are called the Riesz basis constants. 

An \emph{orthogonal basis} is a special Riesz basis in which $A=B=1$.  In this case, the basis satisfies
\[
\langle x_n, x_{n'}\rangle = \delta_{n,n'},
\]
and for an $f\in\mathcal H$ , the scalar coefficients  $(a_n)$ in the basis expansion 
\[
 f = \sum_{n}a_n x_n
\]
 can be computed by taking inner products with the corresponding basis elements:
\[
\label{ortho_coef_functional}
a_n = \langle f,x_n\rangle.
\]
\subsection{Biorthogonal bases}
For each Riesz basis \(  (x_n)  \) that is not orthogonal, there exist another Riesz basis  \(  (\tilde x_n)  \) that satisfies
\begin{equation*}
  \label{eqn:biorthogonal}
\langle x_n, \tilde x_{n'}\rangle = \delta_{n,n'},
\end{equation*}
and therefore one has the relation
\[
a_n = \langle f,\tilde x_n\rangle,
\]
useful for the computation of the coefficients.  The bases \(  ( x_n)  \) and \(  (\tilde x_n)  \) are called \emph{biorthogonal} bases.  For the proof of existence of a biorthogonal basis for every Riesz basis, we refer to \cite{christensen08}.

\subsection{Frames}
A frame is a countable collection of vectors $(e_n)$ in a Hilbert space $\mathcal H$, for which there exists constants $A$ and $B$ such that 
$$A\|f\|^2 \leq \sum_n{|\langle f,e_n \rangle |^2} \leq B\|f\|^2.$$

Frames are generally regarded as redundant families of vectors: a frame is not necessarily linearly independent, and a frame expansion of a function is not unique. 

\chapter{Randomized domain-adapted wavelets}

  In this chapter we give details about our construction of  two and three dimensional wavelets on an arbitrary domain, and study their properties.  
 This construction  constitutes one of our main contributions to the thesis. 
  
  \section{The structures needed for the construction}
  In order to start defining and shaping filter operators within the lifting framework, we first need the embedded index sets (grids) $\mc K(j)$ for $j\in \mc J$, as defined in Chapter 2. The scale index set will be $\mc J = \mathbb N$, but in practice we will only consider the finite subset $\mc J = \{0,1,2, \cdots N\}$ for some $N \in \mathbb N$.  After constructing  $(\mc K(j))$, we can give the corresponding detail index sets $(\mc M(j))$ and the nested partitionings $\left(S_{j,k}\right)$. We will also need \emph{neighbor, sibling, parent} and \emph{child} relations on $(\mc K(j))$, to be used in the \emph{prediction} and \emph{update} steps of the lifting.

\subsection{The domain}  
\label{subsec:domain}
The domain $\mc X$, which will be an input to the algorithm, is a subset of $\mathbb Z^2$ or $\mathbb Z^3$. For the case of $\mathbb Z^3$, the domain $\mc X$  actually corresponds to the intersection of a set $X\subset \mathbb R^3$ with the discrete grid $\{(n_x d_x, n_y d_y, n_z d_z)\colon (n_x,n_y,n_z)\in\mathbb Z^3\}$, where $d_x$, $d_y$ and $d_z$ are the resolution parameters in the corresponding directions. Each $n\in\mc X$ represents the rectangular region 
\begin{align}
S_n = &[n_x d_x - d_x/2, n_x d_x + d_x/2]\notag\\
      &\times [n_y d_y - d_y/2, n_y d_y + d_y/2]\notag\\
      &\times [n_z d_z - d_z/2, n_z d_z + d_z/2]\label{eqn:defn_sn},
\end{align}
which is called a \emph{voxel}.  A similar definition holds for two dimensions, in which the set $S_n$ is called a \emph{pixel}.   An example of a discrete domain and the corresponding pixels is displayed in Figure \ref{fig:discrete_dom}. 
\begin{figure}
 \centering
 \includegraphics[width=12cm]{./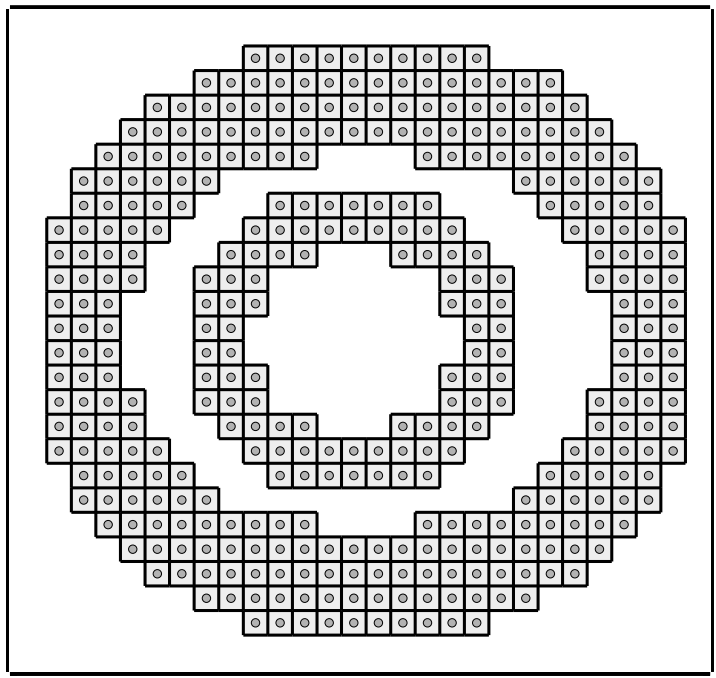}
 \caption{A  two dimensional discrete domain $\mc X$, whose elements are displayed as dots, and the corresponding pixels $(S_n)$ are displayed as the squares surrounding the dots.}
 \label{fig:discrete_dom}
\end{figure}
\subsubsection{The measure}
The measure of each set $S_n$, denoted by $\mu(S_n)$, is another required input. In the regular setting, one takes $\mu(S_n)=1$ for all $n\in\mc{X}$, but in some applications it may be more meaningful to weight each voxel according to some related measure.   

Any set that we shall consider in the finite setting will be a union of the sets $(S_n)$, i.e sets of the form 
\[
 S = \bigcup_{n\in \mc{X'}}S_n
\]
for some $\mc{X'}\subset \mc X$, whose measure is given by
\[
 \mu(S) = \sum_{n\in\mc{X'}}\mu(S_n).
\]

\subsubsection{Neighboring elements}
We define two elements $n, m \in \mc X\subset \mbb Z^3$ to be \emph{neighbors}, if 
\[
 |n_x-m_x|+|n_y-m_y| + |n_y-m_y| = 1,
\]
where $n = (n_x,n_y,n_z)$ and $m = (m_x, m_y, m_z)$.   This is equivalent to the condition that the rectangular prisms bounding regions $S_n$ and $S_m$ share a face.  A similar definition also holds for two dimensions. 

\subsection{Embedded grids and the random merging algorithm}
We choose an $N\in \mathbb N$, which is the number of decomposition levels.  We take the scale index set $\mc J$ to be $\{0,1,2,\cdots,N\}$, where $j=N$ represents the scale of highest resolution available. 

We set $\mc K(N)$ to be equal to the discrete domain  $\mc X$.  For every $k\in\mc K(N)$, we denote the set of {neighbors} of $k$ with \(\operatorname{Nbr}(N,k)\subset \mc K(N) \), based on the neighborhood structure of $\mc X$.  Also we set $S_{N,k} := S_k$, where $S_k$ is the set defined in (\ref{eqn:defn_sn}). After defining $\mc K(N)$, we will define $\mc K(N-1), \mc K(N-2), \cdots ,\mc K(0)$, and the corresponding neighborhood structure with a random merging algorithm, inductively.
  
For a given $j$, assume we have $\mc K(j)$, the sets $(S_{j,k})_{k\in\mc K(j)}$, and a collection of sets $\{\opnm{Nbr}(j,k)\colon k\in\mc K(j)\}$ of the neighbors for each element.
\begin{enumerate}
\item  Start with $\mc K(j-1) = \mc M(j-1) = \emptyset$. 
 \item Compute the centroids of all sets $S_{j,k}$, which is defined as 
 \[
      C(S_{j,k}) = \frac{1}{\mu(S_{j,k})}\sum_{S_{N,k}\subset S_{j,k}}(k_x d_x, k_y d_y, k_z dz)\mu(S_{N,k}),
 \]
 which gives a vector $C(S_{j,k})\in\mathbb R^3$.
\item \label{step:randperm} Declare all elements of $\mc K(j)$ as available, and put them in a linear order that is determined by a random permutation,
 \item \label{step:togo}For  the first available $k\in\mc K(j)$, select at most $p$ of the available neighbors, ${s_1,s_2,\cdots, s_q}\in\mc K(j)$, where $q\leq p$,
  in a random way, with probability of being selected being inversely proportional to the distance between the centroids,
 \item Add $k$ to $\mc K(j-1)$ and each of $s_1, s_2, \cdots, s_q$ to $\mc M(j-1)$,
  \item
  Mark each of ${k,s_1,s_2,\cdots, s_q}\in\mc K(j)$ to be \emph{unavailable}, and declare them to be \emph{siblings}, a relationship denoted as
 \[
  \opnm{Sib}(j,k) = \opnm{Sib}(j,s_1) = \cdots =\opnm{Sib}(j,s_q)= \{k,s_1,s_2,\cdots,s_q\},
 \]

 \item Define the set $\disp S_{j-1,k} := \bigcup_{l\in\opnm{Sib}(j,k)} S_{j,l},$
 \item Go to Step \ref{step:togo}, until all elements of $\mc K(j)$ are marked as unavailable,
 \item Declare two elements $k_1, k_2\in \mc K(j-1)$ to be neighbors, if there exists $s_1 \in \opnm{Sib}(j,k_1)$ and $s_2 \in \opnm{Sib}(j,k_2)$ such that $s_1$ and $s_2$ are neighbors in $\mc K(j)$, i.e., $s_1 \in \opnm{Nbr}(j,s_2)$.

\end{enumerate}
There are two sources of randomness in this algorithm: one is in the random permutation of $\mc K(j)$ in Step \ref{step:randperm} and the other is in the random choice of neighboring sets to be selected to be merged in Step \ref{step:togo}. 

It can be verified that the set of partitionings $(S_{j,k})$ that is output by the algorithm, after being extended for levels $j = N+1,N+2,\ldots$  in some suitable way, satisfies Definition \ref{defn:partitionings}.

The parameter $p$ in Step \ref{step:togo} is mostly set to 3 in our experiments.  

The formation of siblings is displayed in Figure \ref{fig:grid_siblings}, an example output of the algorithm in Figure \ref{fig:embedded_grid}, and the graphs indicating the neighbors is shown in \mbox{Figure \ref{fig:grid_neighbors}}. 
\begin{figure}
 \centering
 \includegraphics[width=8cm]{./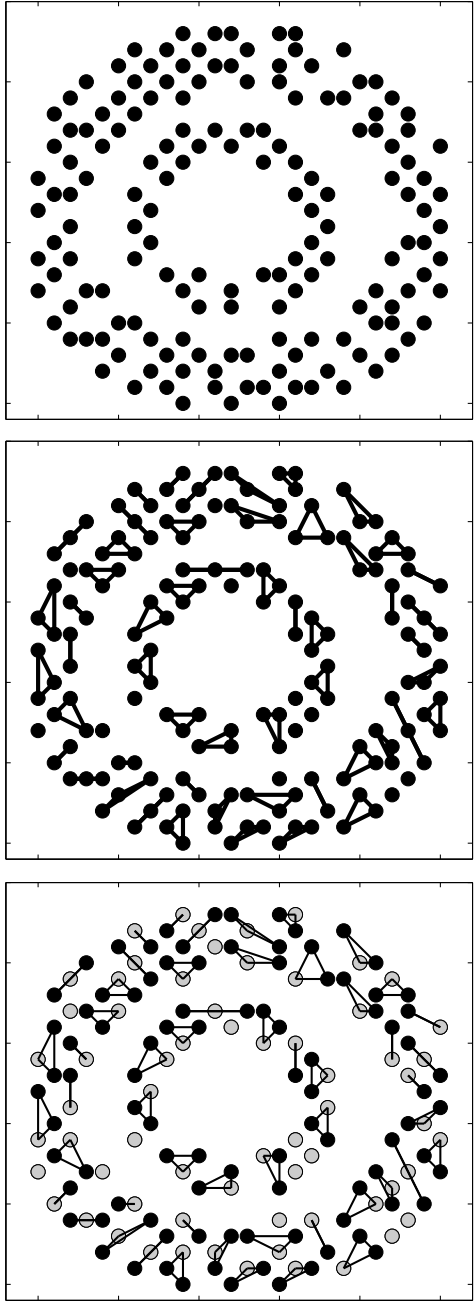}
 \caption{Illustration of merging and splitting of the grid $\mc K(j)$.  The top row is the original grid $\mc K(j)$. In the second row, elements of the grid are randomly merged with  neighbors, and they form groups consisting of one, two or three elements.  In the last row, one element in each group, determined in random way, is displayed in a lighter color.  The elements shown in light color make up $\mc K(j-1)$, and grid elements that remain dark-colored belong to the complimentary grid $\mc M(j-1)$. }
 \label{fig:grid_siblings}
\end{figure}

\begin{figure}
 \centering
 \includegraphics[width=15cm]{./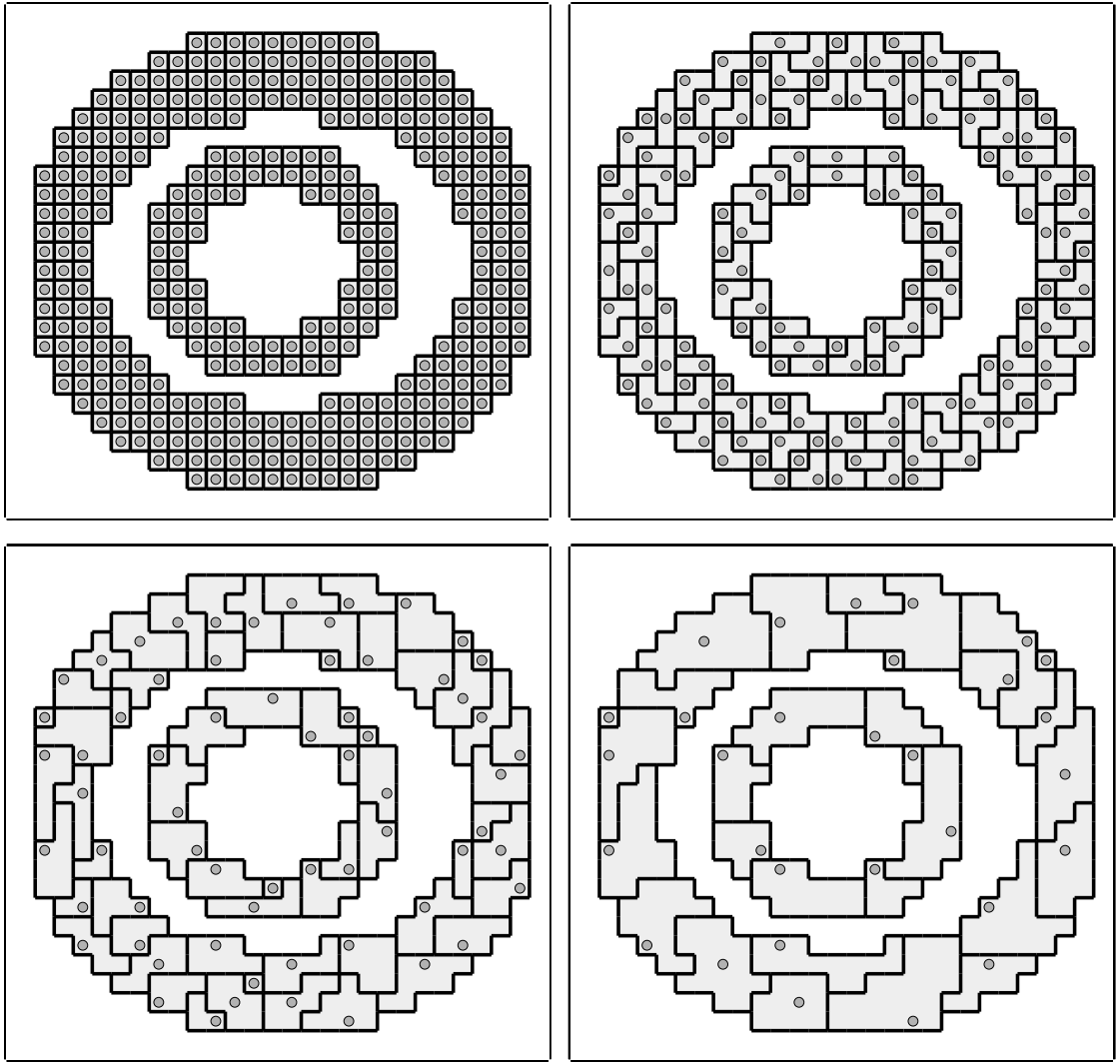}
 \caption{The embedded grid and the corresponding regions for a three level decomposition.  In each figure, the dots represent $\mc K(3), \mc K(2), \mc K(1)$ and $\mc K(0)$ in the reading order. The shaded regions bounded by lines represent 
 $(S_{3,k})_{k\in\mc K(3)}, (S_{2,k})_{k\in\mc K(2)}, (S_{1,k})_{k\in\mc K(1)}$ 
 and $(S_{0,k})_{k\in\mc K(0)}$.
 }
 \label{fig:embedded_grid}
\end{figure}

\begin{figure}
 \centering
 \includegraphics[width=11cm]{./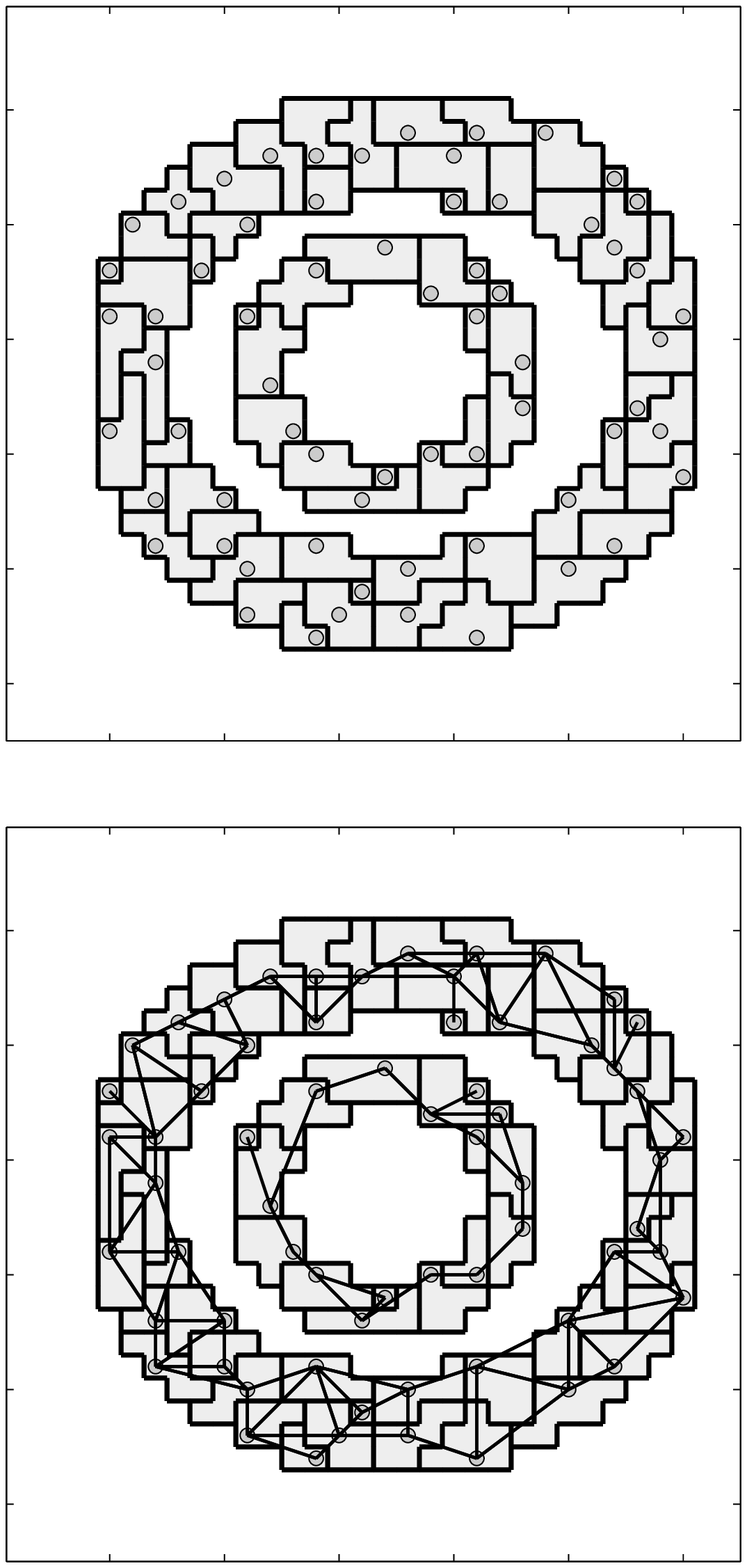}
 \caption{Grid neighbors and regions.  In the top each grid element $k\in\mc K(j)$ is shown insinde the region $S_{j,k}$ it represents.  The bottom row displays the neighborhood graph, where the condition for two elements to be neighbors is equivalent to the regions they represent sharing a nontrivial boundary.  Note that there are grid element pairs that are geographically very close but that nevertheless are not neighbors of each other.}
 \label{fig:grid_neighbors}
\end{figure}


\section{The wavelet transform}
After obtaining the necessary structures, such as the nested index sets $(\mc K(j))_j$, their complements $(\mc M(j))_j$, and the neighborhood and sibling relations on them, we are now ready to define the filter operators that identify the wavelet transform.  Our wavelet transform  is basically a version of David Donoho's \emph{average interpolating wavelets} \cite{donoho}, which can be obtained with a three-stage lifting framework \cite{Sweldens_buildingyour}. Following  the splitting step (lazy wavelet transform), it continues with a prediction, an update and another prediction step.  The first two steps are  similar to Haar decomposition, and result in wavelets similar to the ones called \emph{unbalanced Haar wavelets} \cite{unbalanced_haar}. These wavelets are orthogonal, and the transform can be considered to be a meaningful wavelet transform in its own right.  

The second prediction step is a version of the average interpolation prediction, which converts the Haar-like wavelets of the previous step into smoother functions.  With this step, we lose orthogonality, in return for smooth wavelet and scaling functions, which entails better sparsity properties for the output. 

\subsection{The finest available resolution}
In the practical implementation, the  scale index $\mc J = \{1,2,\cdots,N\}$ is finite, there is a highest available resolution, which is determined by the measuring device's capabilities. 
The discrete function input to the transform $f_d\in \ell^2(\mc X)$ is assumed to come from a corresponding function $f\in L^2(X)$, with
\beqn
\label{eqn:finest_dual_scaling}
 f_d(n) = \frac{1}{\mu(S_{n})} \int_{S_n}f\, d\mu ,\ \  \ \ \text{for } n\in \mc X
\eeqn
for some measure that agrees with the  $(\mu(S_n))_n$ of Subsection \ref{subsec:domain}.   As such, the dual scaling functions of the finest level $N$ are defined to be
\[
 \tilde \varphi_{N,k} = \frac{1}{\mu(S_{N,k})}\chi(S_{N,k}),
\]
and the primal scaling functions $(\varphi_{N,k})_{k\in\mc K(N)}$ are
\[
  \disp\varphi_{N,k} = \chi (S_{N,k}),
\]
where $\chi$ denotes the characteristic function, which takes $1$ inside the set and $0$ elsewhere.   Accordingly, the finest resolution scaling function coefficients $\lambda_{N,j}$ are simply taken to be equal to the $f_d(n)$'s, and as a result
\[
 \sum_{k\in\mc K(N)}\lambda_{N,k}\varphi_{N,k} = \sum_{n\in\mc X}f_d(n)\chi(S_n) \approx f,
\]
noting that $\mc K(N) = \mc X$ and $S_k = S_{N,k}$.

In synthesizing the scaling functions and the wavelets with the algorithm in Subsection \ref{subsec:scaling_fun_cascade}, we will have to stop at $j = N$.  Hence, in a sense, the scaling functions $(\varphi_{N,k})_k$ will be the atomic building blocks for the wavelets and scaling functions that we consider, in the same sense that the sets $(S_{N,k})_k$ are the building blocks for all sets that we consider, in the construction.

\subsection{The lifting steps}
We assume an input $\lambda_{j+1} = (\lambda_{j+1,k})_{k\in\mc K(j)}$ is given. The analysis filters will initially perform the lazy wavelet
transform, which is a simple splitting.  This initial pair of analysis filters will be denoted as $(\tilde H^{\text{lazy}}_j, \tilde
G^{\text{lazy}}_j)$ and their outputs as $(\lambda^{(0)}_j,\gamma^{(0)}_j)$.  Three lifting steps (a prediction, an update and another prediction)
will modify the filters. The resulting analysis filter pairs will be denoted as $\left(\tilde H^{(1)}_j, \tilde G^{(1)}_j\right)$, $\left(\tilde
H^{(2)}_j, \tilde G^{(2)}_j\right)$ and 
$\left(\tilde H^{(3)}_j, \tilde
G^{(3)}_j\right)$.  The outputs of these filters will be denoted as  $\left(\lambda^{(1)}_j,\gamma^{(1)}_j\right)$, $\left(\lambda^{(2)}_j
,\gamma^{(2)}_j\right)$ and $\left(\lambda^{(3)}_j,\gamma^{(3)}_j\right)$, respectively.   The lifting operators will be denoted by $P_1$, $U$ and $P_2$, respectively, and the filters will satisfy
{\allowdisplaybreaks
\begin{align*}
 \tilde H^{(1)}_j &= \tilde H^{\text{lazy}}_j
 \\
 \tilde G^{(1)}_j &= \tilde G^{\text{lazy}}_j - P_1 H^{\text{lazy}}_j 
 \\
 \tilde H^{(2)}_j &= \tilde H^{(1)}_j + U \tilde G^{(1)}
 \\
 \tilde G^{(2)}_j &= \tilde G^{(1)}_j 
\\
 \intertext{and,}
  \tilde H^{(3)}_j &= \tilde H^{(2)}_j 
 \\
 \tilde G^{(3)}_j &= \tilde G^{(2)}_j - P_2 \tilde H^{(2)}_j.
\end{align*}
}In the implementation, one does not need to explicitly compute these filters. Starting with $\lambda_{j+1}$ and, one can obtain $\lambda_{j}^{(3)}$ and $\gamma_{j}^{(3)}$ in a sequence of operations
\begin{align*}
 & \lambda_{j}^{(0)} = \tilde H^{\text{lazy}}_j \lambda_{j+1},
 &
 & \gamma_{j}^{(0)} = \tilde G^{\text{lazy}}_j \lambda_{j+1},
 \\
& \lambda_{j}^{(1)} = \lambda_{j}^{(0)},
&
 & \gamma_{j}^{(1)} = \gamma_{j}^{(0)} - P_1 \lambda_{j}^{(0)},
\\
& \lambda_{j}^{(2)} = \lambda_{j}^{(1)} + U \gamma_{j}^{(1)},
&
 & \gamma_{j}^{(2)}  = \gamma_{j}^{(1)},
\\
& \lambda_{j}^{(3)} = \lambda_{j}^{(2)},
&
 & \gamma_{j}^{(3)}  = \gamma_{j}^{(2)} - P_2 \lambda_{j}^{(2)},
\end{align*}
as displayed in Figure \ref{fig:lifting_diagram}. Similarly, the inverse transform is implemented by reversing the steps:
\begin{gather*}
\begin{aligned}
  & \lambda_{j}^{(2)} = \lambda_{j}^{(3)}, \hspace{3cm}
  &
  & \gamma_{j}^{(2)}  = \gamma_{j}^{(3)} + P_2 \lambda_{j}^{(2)},
  \\
  &\gamma_{j}^{(1)}  = \gamma_{j}^{(2)},
  &
  & \lambda_{j}^{(1)} = \lambda_{j}^{(2)} - U \gamma_{j}^{(1)},
  \\
  & \lambda_{j}^{(0)} = \lambda_{j}^{(1)},
&
 & \gamma_{j}^{(0)} = \gamma_{j}^{(1)} + P_1 \lambda_{j}^{(0)},
 \end{aligned}
 \\
  \lambda_{j+1} =  H^{*\, \text{lazy}}_j\lambda_{j}^{(0)} +
      G^{*\, \text{lazy}}_j\gamma_{j}^{(0)}.
\end{gather*}
The filters $\left(\tilde
H^{(2)}_j, \tilde G^{(2)}_j\right):=\left(\tilde H^{\text{Haar}}_j, \tilde G^{\text{Haar}}_j\right)$ that come after the second lifting step 
actually corresponds to the unbalanced Haar transform, which we will give in the next subsection.   The third lifting step is based on \emph{average interpolation},  and the resulting filters after this step, $\left(\tilde
H^{(3)}_j, \tilde G^{(3)}_j\right):=\left(\tilde H^{\text{AI}}_j, \tilde G^{\text{AI}}_j\right),$  are called  average interpolating filters, to be explained in Subsection \ref{subsec:average_interp}. 

\begin{figure}
 \centering
 \begin{overpic}[scale=.53]
 {./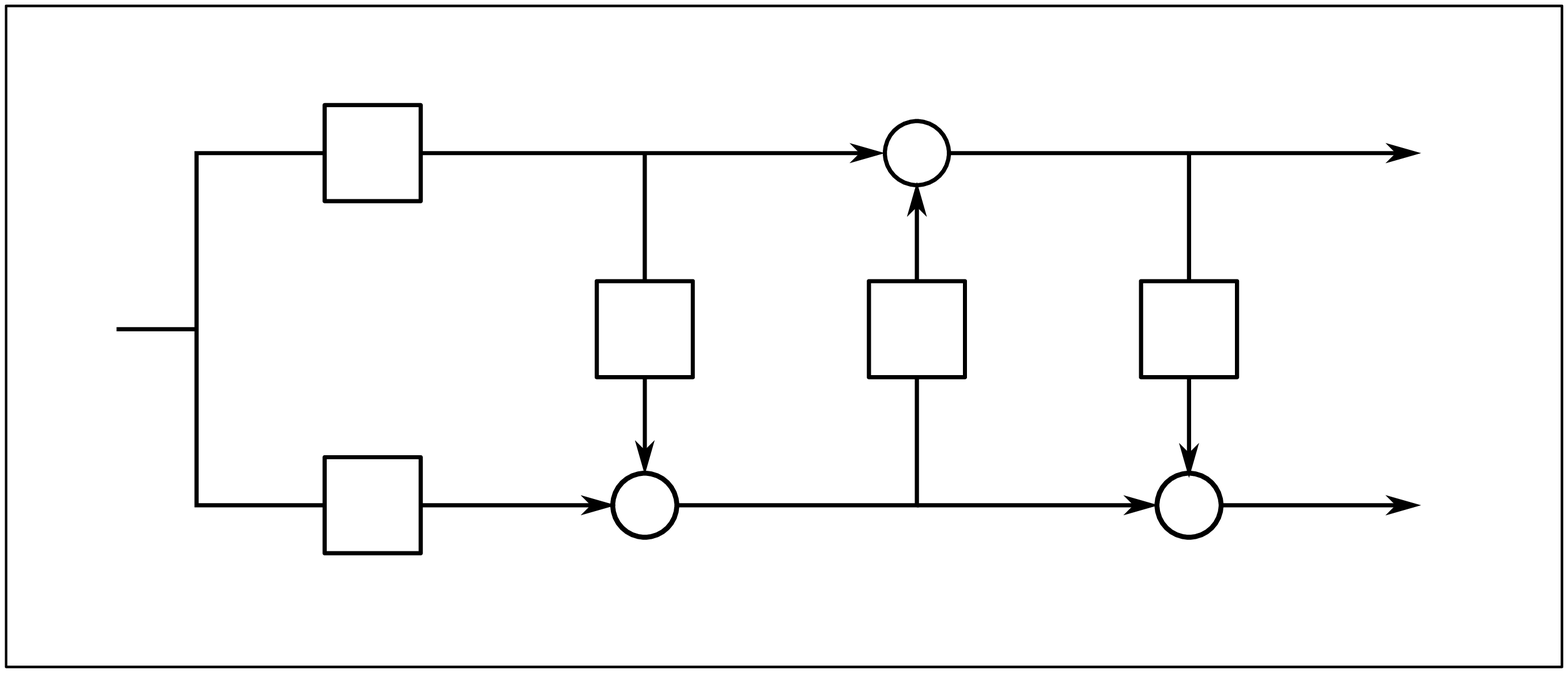}
 \put(1.5,21){$\lambda_{j+1}$}
  \put(39.5,20.5){$P_1$}
  \put(57.5,20.5){$U$}
  \put(74.5,20.5){$P_2$}
  \put(38,13){$-$}
  \put(36.5,11.5){$+$}
  \put(73,13){$-$}
  \put(71.5,11.5){$+$}
       
  \put(54.5,30.5){$+$}
  \put(56,29){$+$}
  
 \put(21,32){\footnotesize{$\tilde H_j^{\text{lazy}}$}}
 \put(92,32.25){$\lambda^{(3)}_{j}$}
 \put(32,34.75){$\lambda^{(0)}_{j}$}
  \put(48,34.75){$\lambda^{(1)}_{j}$}
    \put(66,34.75){$\lambda^{(2)}_{j}$}

\put(32,6.5){$\gamma^{(0)}_{j}$}
  \put(48,6.5){$\gamma^{(1)}_{j}$}
    \put(66,6.5){$\gamma^{(2)}_{j}$}

 \put(57.75,32.25){\footnotesize{$\Sigma$}}
  \put(40.25,9.5){\footnotesize{$\Sigma$}}
 \put(75,9.5){\footnotesize{$\Sigma$}}

\put(21,9){\footnotesize{$\tilde G_j^{\text{lazy}}$}}
\put(92,9.75){$\gamma^{(3)}_{j}$}
 \end{overpic}
 \caption{Diagram illustrating the implementation of the three-stage lifting.}
 \label{fig:lifting_diagram}
\end{figure}

\subsection{Unbalanced Haar wavelets}
\subsubsection{First prediction}
The first prediction operator $P_1$, is a function from $\ell^2(\mc K(j))$ to $\ell^2(\mc M(j))$.   First we note that, for each $m\in \mc M(j)$ there exists a unique $k_m\in\mc K(j)$ such that $k_m \in \opnm{Sib}(j+1,m)$.  If $a = (a_k)_{k\in\mc K(j)}$ is the input to $P_1$, the  $m$th entry in the output of $P_1$ is simply defined to be equal to $a_{k_m}$, i.e.,
\begin{align*}
   P_1\colon & \ell^2( \mc K(j)) \rightarrow  \ell^2( \mc M(j)), \ \ \  P_1 a = b \ \text{means}
\\
&\hspace{2.5cm} b_{m} = a_{k_m}, 
\end{align*}
where $k_m$ is the unique element of $\mc K(j) \cap \opnm{Sib}(j+1,m)$. 

The first prediction step is based on predicting the value of an entry  $\lambda_{j+1, k}$ to be the same value as $\lambda_{j+1,k_m}$, for a specific neighbor $k_m$ in $\mc K(j+1)$. The idea is very similar to \emph{nearest neighbor interpolation}, except that the neighbor used in prediction is not necessarily the nearest one, but is determined by the structure of the embedded grids.  For the example in Figure \ref{fig:grid_siblings},  the value at the dark colored spots in the lowest row is to be predicted with the value in the corresponding light colored spot. 

 We note that, after this first prediction step, one has
\begin{align}
\label{eqn:after_first_pred}
 \gamma_{j,s_1}^{(1)} = &\lambda_{j+1,s_1} - \lambda_{j+1,k}\notag\\
& \vdots \\
  \gamma_{j,s_q}^{(1)} = &\lambda_{j+1,s_q} - \lambda_{j+1,k}.\notag
\end{align}
where $k\in\mc K(j)$ and  $\opnm{Sib}(k,j+1) = \{k, s_1, s_2, \cdots, s_q\}$, with $q$ being the number of siblings.

\subsubsection{Update}
The update operator $U$ is a function from $\ell^2(\mc M(j))$ to $\ell^2(\mc K(j))$ that modifies the output of the previous step as $\lambda^{(2)}_j = \lambda^{(1)}_j + U \gamma^{(1)}_j$, in order to guarantee that
\[
\sum_{k\in\mc K(j+1)} \lambda_{j+1,k}\,\mu(S_{j+1,k})=\sum_{k\in\mc K(j)} \lambda^{(2)}_{j,k}\,\mu(S_{j,k}).
\]
It is sufficient, and usually expected, that this condition is satisfied locally, by having 
\begin{align}
\label{eqn:measure_preserve_loc}
 \lambda_{j,k}^{(2)}\,\mu(S_{j,k}) 
 =&
 \lambda_{j+1,k}\,\mu({S_{j+1,k}})
  + \lambda_{j+1,s_1}\,\mu({S_{j+1,s_1}}) 
     \notag\\& + \cdots + \lambda_{j+1,s_q}\,\mu({S_{j+1,s_q}}),
\end{align}
for all $k\in\mc K(j)$, and $s_1,s_2,\cdots,s_q\in  \opnm{Sib}(k,j+1)$.
This can be accomplished with an update operator $U$ given by
\begin{align*}
  U_j\colon & \ell^2( \mc M(j)) \rightarrow  \ell^2( \mc K(j)), \ \ \  U a = b \ \text{means}
\\
& b_{k} = 
\frac
{a_{s_1}\,\mu(S_{j+1,s_1})+\cdots+a_{s_q}\,\mu(S_{j+1,s_q})}
{\mu(S_{j+1,k})+\mu(S_{j+1,s_1})+\cdots+\mu(S_{j+1,s_q})},
\end{align*}
where $\{k,s_1,s_2,\cdots,s_q\} = \opnm{Sib}(j+1,k)$ for all $k \in \mc K(j)$. It can be verified that after this step, one has
\begin{equation}
\label{eqn:averaging_property}
 \lambda_{j,k}^{(2)} = 
 \frac{\lambda_{j+1,k}^{(0)}\,\mu(S_{j+1,k})+\lambda_{j+1,s_1}^{(0)}\,\mu(S_{j+1,s_1})+\cdots+\lambda_{j+1,s_q}^{(0)}\,\mu(S_{j+1,s_q})}{\mu(S_{j+1,k})+\mu(S_{j+1,s_1})+\cdots+\mu(S_{j+1,s_q})},
\end{equation}
which is the same as (\ref{eqn:measure_preserve_loc}), noting that 
\begin{equation*}
\mu(S_{j,k})= \mu(S_{j+1,k})+\mu(S_{j+1,s_1})+\cdots+\mu(S_{j+1,s_q}).
\end{equation*}

\subsubsection{Wavelets and scaling functions}
Using \eqref{eqn:finest_dual_scaling} and \eqref{eqn:averaging_property}, it can be shown inductively that the dual scaling functions $\tilde \varphi_{j,k}$ are given by
\[
 \tilde \varphi_{j,k} = \frac{1}{\mu(S_{j,k})} \chi(S_{j,k}),
\]
and using \eqref{eqn:after_first_pred} one gets the dual wavelets  $(\tilde \psi_{j,m})$ as
\[
 \tilde \psi_{j+1,\,m} = \frac{1}{\mu(S_{j+1,\,k_m})}\chi(S_{j+1,\,k_m}) - \frac{1}{\mu(S_{j+1,m})}\chi(S_{j+1,m}),
\]
where $k_m$ is the unique element of $\opnm{Sib}(j+1,m)\cap \mc K(j)$. 

It is easy to verify that the collection of dual scaling functions and wavelets are orthogonal to each other, which implies that the primal scaling functions and wavelets are the same as the dual ones, up to normalizing constants. These type of wavelets are studied in \cite{unbalanced_haar}, where they are named \emph{unbalanced Haar wavelets}.

\subsection{Average interpolating wavelets}
\label{subsec:average_interp}  The unbalanced Haar wavelets have the advantage of being orthogonal, and they are relatively simple to design, but they have the drawback of not being
  smooth, not even continuous. In a wavelet representation
  $$
  f =
  \sum_{j,m}\gamma_{j,m}\psi_{j,m},
  $$
  we
  usually seek \emph{sparse representations}, meaning that most of the coefficients
  $\gamma_{j,m}$'s are zero or at least negligible. This is usually not achieved by
Haar
  wavelets to the most pronounced degree, because there is a link between the
  smoothness of wavelets and fast decay of wavelet coefficients. Intuitively, we
  can say that Haar wavelets, not being smooth, find it difficult to
represent
  smooth functions and require a larger number of basis elements than is needed by a more smooth basis.  Therefore we
would
  like to have a wavelet basis without sharp discontinuities and certain
  smoothness properties.    
  
  By adding a second prediction operator $P_2$ to the lifting scheme, it is possible to obtain
smoother wavelets out of Haar wavelets, and that give smaller detail coefficients for smooth signals. 
%
  Or, when we think the other way around, if the prediction operator $P_2$  is designed to make the entries of the detail output   smaller for smooth functions, then the corresponding wavelets will turn out to be smoother than Haar wavelets. 
  
  The first prediction step $P_1$ was based on a version of \emph{nearest neighbor interpolation}, while this second prediction step is based on the notion of \emph{average interpolation}, which we explain next.  The use of average interpolation within the context of wavelets is due to Donoho \cite{donoho}.

\subsubsection{Average interpolation}
In the following, all sets considered are subsets of $\mathbb R^n$, and $\mu$ is a measure on $\mathbb R^n$.  The average $ \frac{1}{\mu(S)}\int_S f d\mu$ is denoted shortly as  $ \dashint_S f$, for a given $S\subset \mathbb R^n$. 

Let $S_1,S_2,\cdots, S_r,{C_1},C_2,\cdots, {C_t}\subset \mathbb R^n$ be mutually disjoint sets that are geographically close, for some integers $r$ and $t$.  
Let $f$ be an unknown function, which is assumed to be locally well approximated by polynomials.  

It is assumed that the averages $\dashint_{S_1} f, \cdots, \dashint_{S_r} f$ are known, and the goal is to estimate $\dashint_{C_1} f, \cdots, \dashint_{C_t} f$.  

The approach is to find a polynomial $p$ whose average values agree with $f$ on on $S_1,\cdots,S_r$, i.e.,
\begin{equation}
\label{}
\dashint_{S_i} f = \dashint_{S_i} p 
\end{equation}
for  $i = 1,2,\cdots,r.$  After such a polynomial is found, the estimates for $\dashint_{C_1} f, \cdots, \dashint_{C_t} f$ are given to be $\dashint_{C_1} p, \cdots, \dashint_{C_t} p$, respectively.

\vspace{.5cm}
After this brief description of average interpolation, we can define the second prediction operator that is used to obtain the average interpolating wavelets. 
\subsubsection{Second prediction}
The second prediction operator $P_2$ maps $\ell^2(\mc K(j))$ to $\ell^2(\mc M(j))$, aiming to estimate $\gamma^{(2)}_j$ using $\lambda^{(2)}_j$. Here using \eqref{eqn:averaging_property} and assuming the finest level coefficients $\lambda_{N,j}$ are local averages of some input function $f$ as in  \eqref{eqn:finest_dual_scaling}, we get
\[
 \lambda_{j,k}^{(2)} = \dashint_{S_{j,k}} f.
\]
Also for $m\in\mc M(j)$, by \eqref{eqn:after_first_pred}, one has
\begin{align*}
   \gamma_{j,m}^{(2)} = \gamma_{j,m}^{(1)}  &= \lambda_{j+1,m} - \lambda_{j+1,k_m}\\
	&= \dashint_{S_{j+1,m}}\!\!\!\!f\ \ \  \ -\  \ \ \   \dashint_{S_{j+1,k_m}}\!\!\!\!f,
\end{align*}
where $k_m$ is the unique element of $\mc K(j)\cap\opnm{Sib}(j+1,m)$. We aim to estimate this value, using $\lambda^{(2)}_{j,n_1},\cdots, \lambda^{(2)}_{j,n_r}$, where   $\{n_1,n_2,\cdots,n_r\}=\opnm{Nbr}(j,k_m)$.  This is actually the problem of   estimating $\dashint_{S_{j+1,m}} f$ and 
$\dashint_{S_{j+1,k_m}}f$ using the values $\dashint_{S_{j,n_1}}f, \cdots, \dashint_{S_{j,n_r}}f$, which is to be handled by average interpolation.

In our implementation, for each $k\in\mc K(j)$ we solve for the first degree polynomial
\begin{equation}
\label{eqn:polynomial}
 p_{j,k}(x,y,z) = a_{j,k} + b_{j,k}\, x + c_{j,k}\, y + d_{j,k}\, z
\end{equation}
that minimizes 
\[
 \sum_{n\in\opnm{Nbr}(j,k)}\left|  \lambda^{(2)}_{j,n} - \dashint_{S_{j,n}} p_{j,k}\right|^2. 
\]
 This is a linear problem, and each of the polynomial coefficients in \eqref{eqn:polynomial} is a linear combination of $\lambda^{(2)}_{j,n_1},\cdots, \lambda^{(2)}_{j,n_r}$.  So we can define $P_2$ to be the linear operator such that the $m$th entry of $P_2 \lambda_j^{(2)}$ is given by
\[
 \dashint_{S_{j+1,m}}\!\!\!\!p_{j,k_m}\ \ \  \ -\  \ \ \   \dashint_{S_{j+1,k_m}}\!\!\!\!p_{j,k_m},
\]
noting that this quantity is a also a linear combination of $\lambda^{(2)}_{j,n_1},\cdots, \lambda^{(2)}_{j,n_r}$. If the input $f$ to the algorithm is itself a first degree polynomial, then the fit would be perfect and the prediction will give exact quantities.

After this final prediction step, one can add a normalizing step to the transform that makes each of the wavelets and the dual wavelets of unit norm.

\section{Numerical experiments}

 \subsection{The potential to reduce noise by sparse representation}
  The underlying point in transform-based noise reduction methods is the sparse representation property. A good transform enables a representation that puts  most of the information about a smooth function into a relatively small fraction of the coefficients, while  spreading out a signal uniformly to all components if the signal is noise-like, having little spatial correlation.  More explicitly if we denote the wavelet representation of a function with a simplified single indexed notation as
  \[
   f = \sum_{m\in\mc M}\gamma_m \psi_m,
  \]
then we assume that for the $f$ of interest to us, there exists a set $\mc A \subset \mc M$ with $|\mc A|\ll |\mc M|$ that gives
 \begin{equation}
 \label{eqn:thresholded_approximation}
   f_{\text{approx}}:= \sum_{m\in\mc A}\gamma_m \psi_m \approx f .
 \end{equation}
  For the case of orthogonal wavelets, the best choice for such sets are generally of the form
 \begin{equation}
 \label{eqn:thresholded_set}
   \mc A = \{m\in\mc M \colon |\gamma_m|>\tau\}, 
 \end{equation}
for some threshold $\tau$.  These type of sets give the best approximation in the $L^2$ sense, among the sets of the same size. This way of choosing $\mc A$ usually still works even when the wavelet basis is not orthogonal for some signal subclasses, if the corresponding frame bounds are close to tight. 
\subsubsection{Comparing adaptive wavelets with standard wavelets}
In this numerical experiment, we started with a domain $X$  consisting of concentric rings, contained in a rectangle $R$, as displayed in the top row of Figure \ref{fig:wav_sparsity_comp}.  We generated a smooth function on $R$, which is a different random linear combination of two dimensional Gaussians on each connected component of the $X$.  On $R\setminus X$ the domain also contains another smooth function of the same type.  That is to say, if the domain can be written as $X = S_1\cup\cdots\cup S_n$, where each of $S_1,\cdots,S_n$ is a single, connected ring, and if $f_1,\cdots,f_{n+1}$ are smooth functions on $R$, then our input $f$ is taken to be 
\[
\label{eqn:random_smooth}
 f = f_1\,\chi_{S_1} + f_n\,\chi_{S_n} + f_{n+1}\,\chi_{R\setminus X},
\]
$\chi_S$ denoting the characteristic function of a set $S$. 

This choice of $f$ is motivated by the fMRI problem, where the brain cortex, which is the natural domain of the measured data, has a convoluted structure, and geographically close parts of it may carry signals that are of different nature. 

We computed the coefficients under different wavelet transforms, and computed reconstructed approximations in the form of \eqref{eqn:thresholded_approximation}, the coefficient sets $\mc A$ being selected with thresholding as in \eqref{eqn:thresholded_set}.  For each different threshold, we computed the approximation error as
\[
  \frac {\|f-f_{\text{approx}}\|}{\|f\|}.
\]
Although the standard wavelet transforms give reconstructions over the whole rectangle $R$, we compute the norms  only over the domain $X$, i.e.,
\[
 \|f\| = \sqrt{\int_X |f(x)|^2 dx}.
\]
We also generated multiple realizations of the noise $n$ over the whole square,
whose samples are i.i.d. standard Gaussian random variables.  We transformed
each of the realizations of the noise with the same wavelets, and reconstructed it using only entries from
the coefficient set $\mc A$ that was determined by the function $f$, and
computed the expected norm $\text{E}[{\|n_{\text{rec}}\|}]$, of the
reconstructed noise.  That is, we process the signal $f$ and the noise $n$
separately, and plot $\frac {\|f-f_{\text{approx}}\|}{\|f\|}$ versus
$\text{E}[{\|n_{\text{rec}}\|}]$, while the threshold parameter $\tau$ is being
reduced. This gives a measure of the {potential to reduce noise} of the related
transform on the domain $X$.  

The results are plotted in Figure \ref{fig:wav_sparsity_comp}, where the
experiment is done with three standard wavelet bases that have the domain $R$,
and the domain-adapted wavelets that are constructed for $X$. For each choice of
wavelets, the experiment is repeated for the decomposition levels 1,2 and 3. 
The results show that increasing the decomposition level gives a much more
pronounced positive effect with the domain adapted wavelets than the standard
wavelets.

\subsubsection{Comparing different domains}
We repeated the experiment while changing the thickness of each of the rings of the domain, or the gap between the rings corresponding to the off-domain regions. The results are given in Figure \ref{fig:four_domains}.  This experiment shows that, in order for the adapted wavelets to have a clear advantage over the standard ones, the rings or the gaps between them must be sufficiently thin. For an example like the one given in the first row, the adapted wavelets do not have a considerable advantage over the wavelets that have the whole rectangle $R$ as their domain.

  \begin{figure}
 \centering
 \includegraphics[width=13cm]{./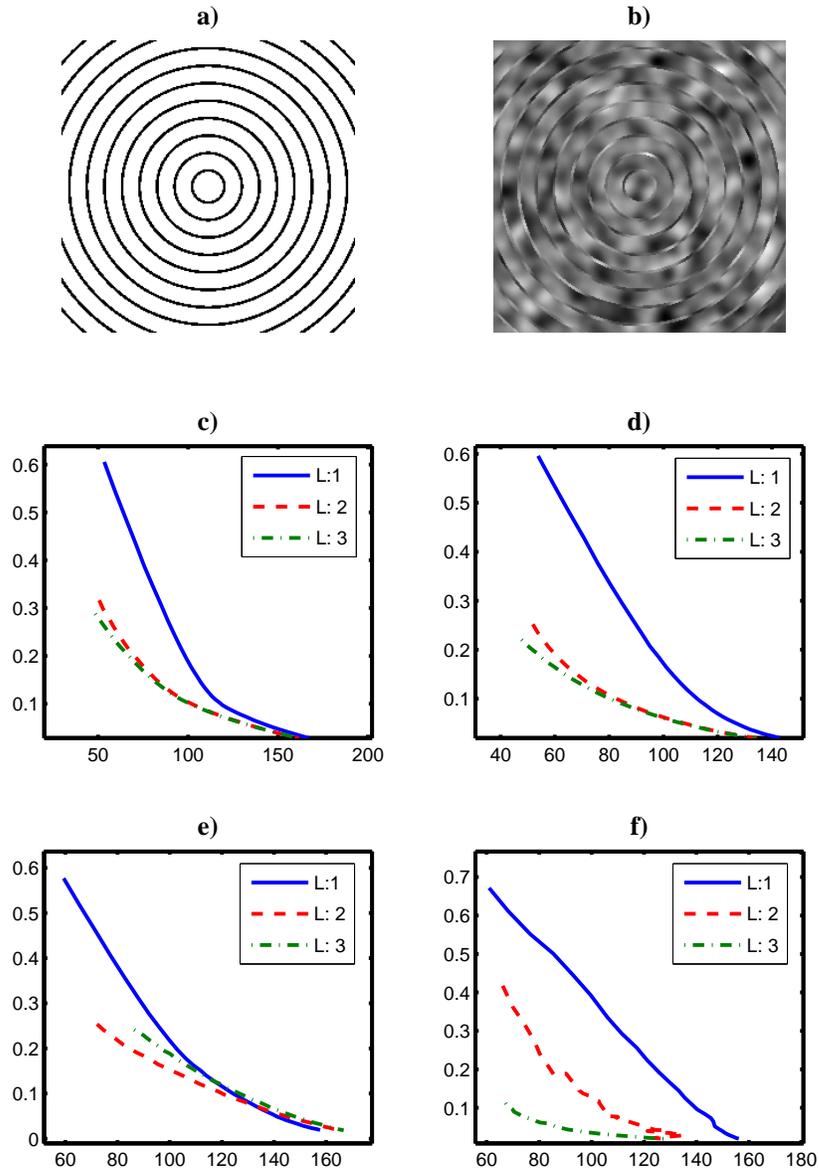}
 \caption{Comparing denoising potentials of different conventional wavelets, and the domain-adapted wavelets.  Relative approximation error (vertical) versus expected noise norm (horizontal), as the threshold is reduced. All error norms are obtained from the surviving coefficients of the original image, and are computed only on the annular domain.  A curve in the lower part of the plane implies a better performance.  a) Domain, b) Image, c) Haar wavelets d) Daubechies-3 wavelets e) Biorthogonal 3.3 wavelets f) Domain-adapted wavelets. The vertical axis in figures c-g is the relative $L_2$ approximation error, which is 
 $\frac {\|f-f_{\text{approx}}\|}{\|f\|}$. Note that increasing the level of the transform has a stronger effect with the domain-adaptive wavelets. 
 }
 \label{fig:wav_sparsity_comp}
\end{figure}

\begin{figure}
 \centering
 \includegraphics[width=15cm]{./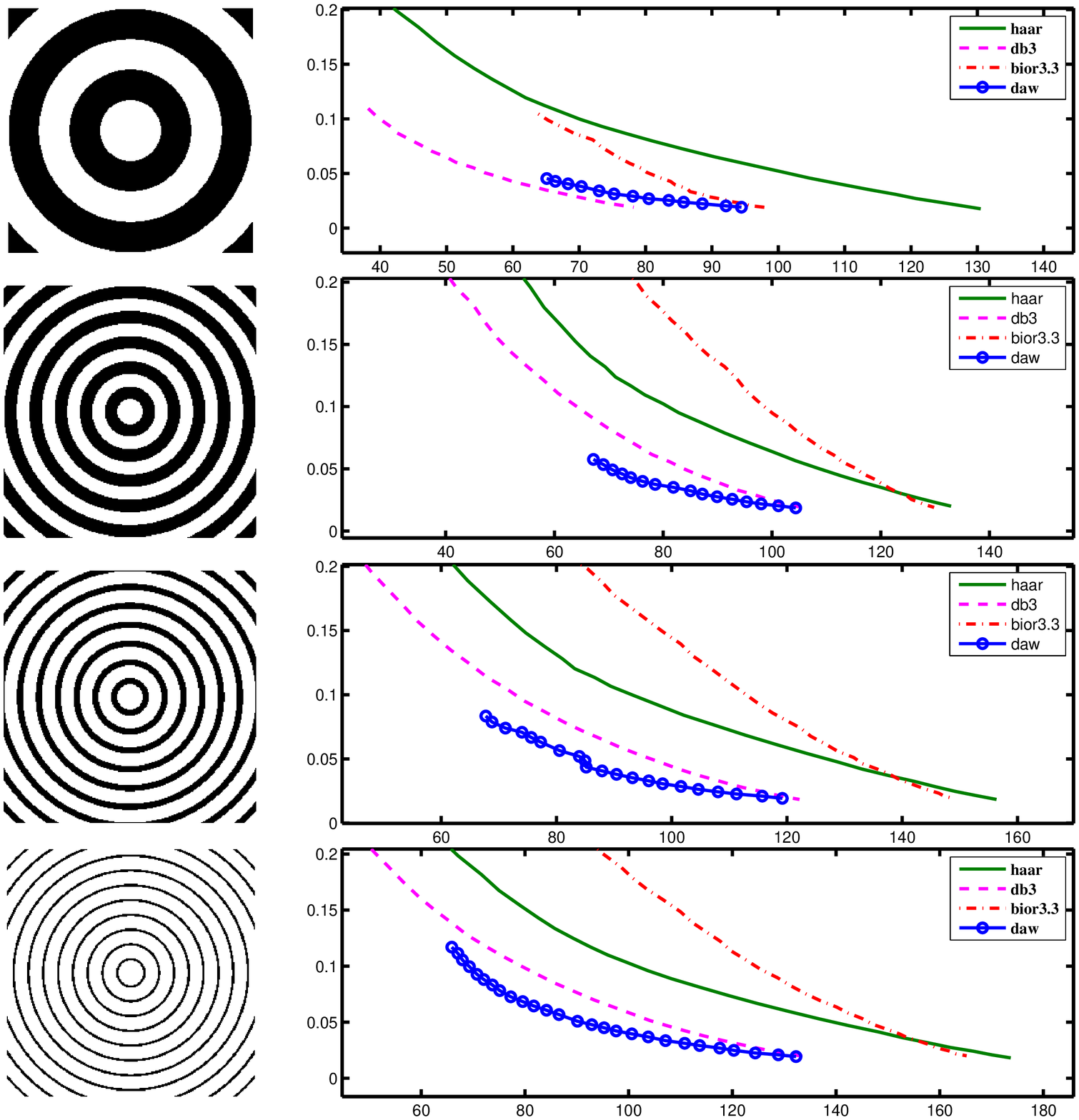}
 \caption{Comparison of the denoising potential of different wavelets on four different two-dimensional domains. Each of the wavelet transforms is computed at Level-3. Figures show the relative approximation error (horizontal) versus expected noise norm (vertical), as the threshold is reduced.  As the domain gets thinner, or the gaps between different circles gets smaller, the domain-adaptive wavelets outperform the conventional wavelets. }
 \label{fig:four_domains}
\end{figure}

\subsection{Improvement due to averaging}
The randomness in our wavelet algorithm allows us to repeat a signal processing task with multiple realization of the wavelets, and then take the average of the results.  In this experiment, we work with a domain consisting of concentric circles and generate smooth signals on them, similar to the examples in the previous subsection.  We add i.i.d. Gaussian noise, and perform a wavelet denoising.  The  signal-to-noise-ratio (SNR) versus number of realizations plot is given in Figure \ref{fig:denoising_with_averaging}.  For chosen domain, with the given noise, the tensor product Daubechies-3 wavelet transform initially performs better denoising than the domain adapted wavelets.  However, as we average over multiple realizations, the domain-adapted wavelets result in better SNR values. 
\begin{figure}
 \centering
 \includegraphics[width=14cm]{./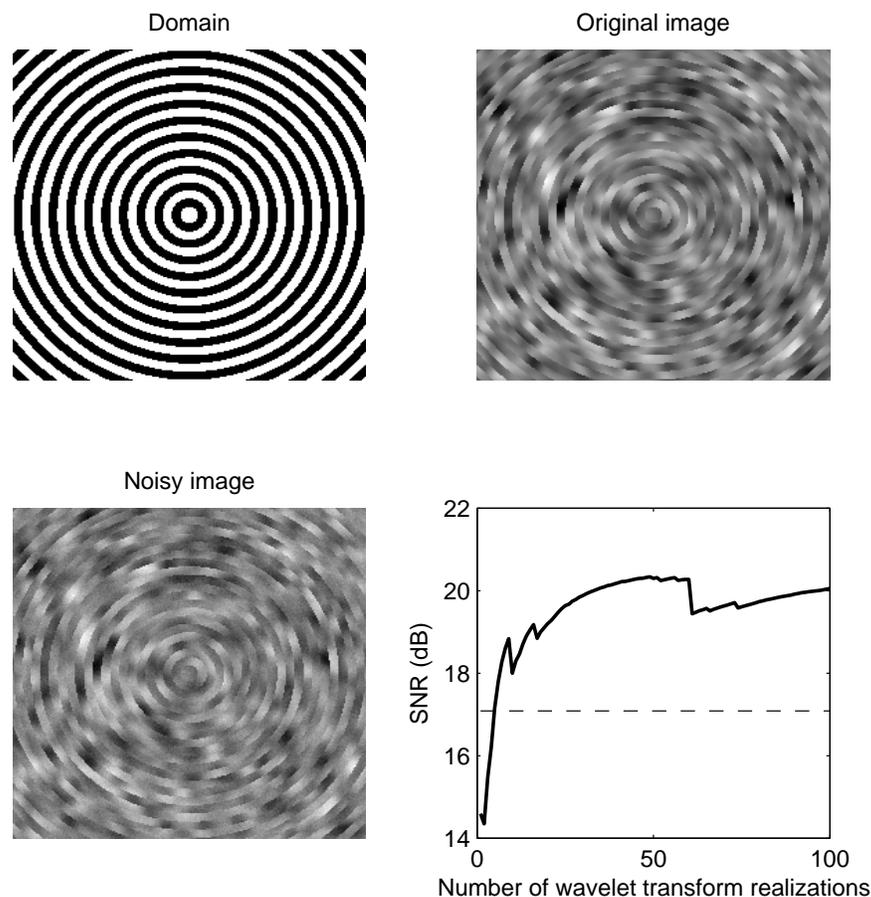}
 \caption{Denoising with multiple wavelet realizations.  The performance improves as we average  the results over multiple realizations.  The dashed line in the plot represent the performance of the Daubechies-3 wavelet, which performed the best among the standard wavelets we tested in the previous subsection. SNR values computed only over the domain. }
 \label{fig:denoising_with_averaging}
\end{figure}

\subsection{The effect of the second prediction}
The second prediction step of the transform is designed to make the detail coefficients smaller, or the wavelets and scaling functions smoother, as explained in Subsection \ref{subsec:average_interp}.  The  wavelet transform before this step is the \emph{unbalanced Haar wavelet} transform, which is orthogonal.  After the second prediction, the wavelets are \emph{average interpolating wavelets}, and they are smooth but not orthogonal.

We tested the effect of this step in noise reduction with an experiment similar to the one in the previous subsection.  The SNR versus number of wavelet realizations is given in Figure \ref{fig:snr_haar_ai}.  This result shows an improvement of nearly 2.5 dB as a result of the second prediction step.  A fixed threshold is empirically determined, and used in all wavelet realizations for the same type.  Since the Haar wavelets are orthogonal, the thresholded approximation gives always the best approximation that can be achieved with the same number of surviving coefficients.  However this is not necessarily the case for the average interpolating wavelets, because they are not orthogonal.  This explains the sudden drops in the SNR curve corresponding to some outlying wavelet realizations.  

\begin{figure}
 \centering
 \includegraphics[width=12cm]{./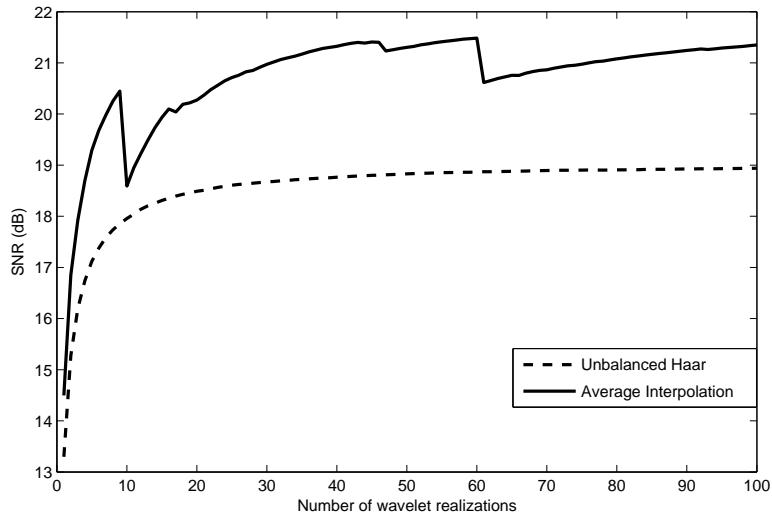}
 \caption{Comparison of unbalanced Haar wavelets and average interpolating wavelets in a noise reduction experiment. Performance improves as the results obtained by newer wavelet realizations are included in the cumulative average. The average interpolating wavelets give better performance than the unbalanced Haar wavelets, but occasional drops in performance are observed when an outlying wavelet realization is encountered. }
 \label{fig:snr_haar_ai}
\end{figure}

\subsection{Translation-invariant processing}
  In a reliable signal processing algorithm, one would intuitively expect a
  translation-invariance property.  If we translate the signal without
  distorting, process it and then translate it back, we would expect the
  result to be the same as if it were processed without being translated. The simplest version of a multiresolution-based wavelet algorithm does not have this invariance, which is why  Donoho and Coifman
  propose  the wavelet spin cycle algorithm in \cite{donoho_transinv}, which achieves translation invariant
  denoising in one dimension.  Translational invariance in higher dimensions can be achieved in the same way.  In dimensions two and higher, one can similarly desire rotational invariance, which can be achieved by introducing redundancy in angular resolution as well, e.g., via steerable filters \cite{freeman91} or dual-tree wavelet transforms \cite{selesnick05}.

  We tested the  invariance property under translations and rotations of our randomized wavelet transform algorithm, when the results are averaged over multiple realizations, as follows.  We took an image on a 64$\times$64 square domain, and we considered a 5-level randomized wavelet decomposition. 
 We chose a processing task of projecting an image  onto the spaces $V_0, W_0, W_1, \cdots W_5$, which are described in Subsection \ref{subsec:multiresolution} and Subsection \ref{subsec:wavelets}.  This corresponds to
  transforming the image, and reconstructing from only a single level of
  coefficients.  We took averages of the results according to  100 different 
  wavelet realizations, and found that the realization-averaged transform is (almost) invariant under translations and rotations. This is illustrated in   The result is shown in Figure \ref{fig:translation_invariance}, for rotations (which is the harder of the two tasks); and it is
  seen that the process commutes with a rotation of 45 degrees, as desired. 

 \begin{figure}
  \centering
  \includegraphics[width=10.5cm]{./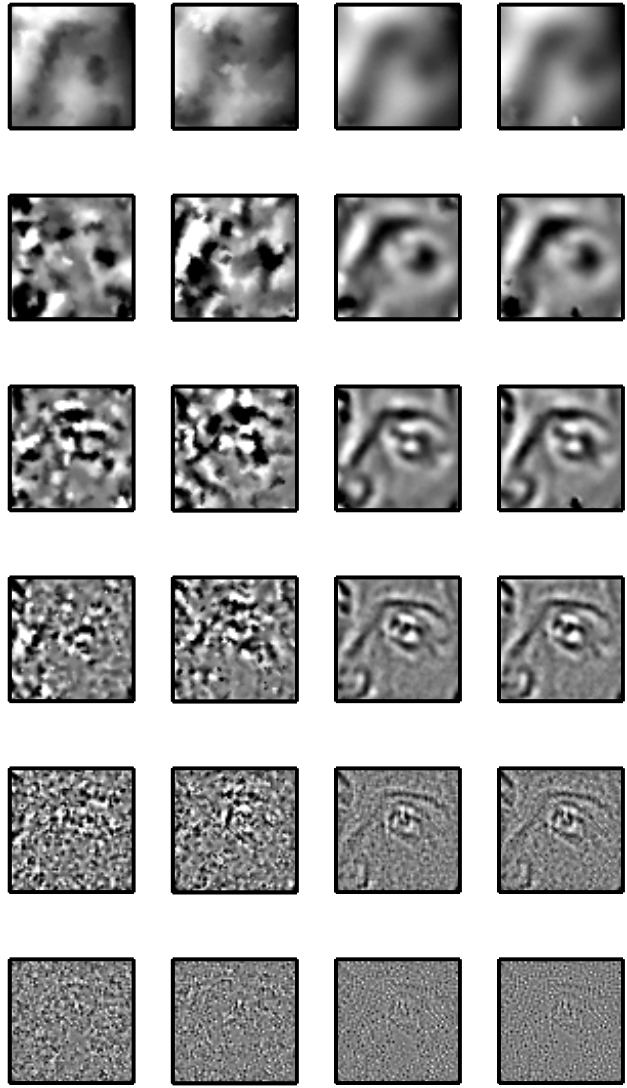}
  \caption{An image transformed and reconstructed back form a single level of wavelet coefficients, i.e, projected on to the corresponding subspaces for a 5-level wavelet decomposition. 
  The six rows correspond to the projection onto the spaces $V_0, W_0, W_1, \cdots W_5$, respectively. In the first and third 
  columns, images are processed without being translated whereas in the second and fourth columns they are first rotated 45 degrees, and then processed, and rotated back. The left two columns show the result of a single realization, and the right two columns show the result of computing the average of 100 realizations.  We see that the processing
  becomes much more powerful, and almost rotationally invariant when averaged over multiple realizations.}
  \label{fig:translation_invariance}
  \end{figure}
\chapter{Application to wavelet-based statistical analysis of fMRI data}
One of the most widely used and recognized methods for fMRI analysis  is
Statistical parametric mapping (SPM) \cite{hbf2}. In SPM, a key step is
spatial prefiltering with a Gaussian window, as a means of reducing noise. However, Gaussian filtering has the drawback of destroying the fine spatial details.   A wavelet-based alternative of SPM, which is called WSPM,  is due to Van De Ville et al. \cite{surfing, vandeville04,
vandeville0406, vandeville07}. It replaces the Gaussian filtering step with
wavelet filtering, and it involves thresholding in the wavelet domain as a denoising step, followed 
by a thresholding in the spatial domain. The threshold parameters in WSPM are selected so as to control the false positive rate, while minimizing the reconstruction error.   

Standard wavelets used in this framework
are defined on rectangular domains, typically a square (in two dimensions) or a cube (in
three dimensions).  On the other hand, the natural domain of the neural activity
is the brain cortex, which is an intricately convoluted three dimensional
domain.  

This chapter essentially  consist of the application of 
the domain-adapted wavelets of the previous chapter to the WSPM framework. The wavelets are constructed so as to have the brain cortex as their
natural domain.
\section{Statistical testing of fMRI data}
\subsection{One sample t-test}
\label{one_sample_t_test}
One of the most basic tasks in fMRI data analysis is to determine whether there is any brain response to a given stimulus or a given train of stimuli, and if there is, to determine its location.  In a simplified paradigm, we may assume the measurement we get from a given voxel is a random variable of the form 
$$
  v_n = \mu_n + e_n,
$$
where $e_n$ is a noise random variable with zero mean and unknown variance, and $\mu_n$ is the true activation in the voxel $n$, i.e. $\mu_n = 0$ when there is no activation and $\mu_n>0$ when there is activation. For each voxel $n \in V \subset \mathbb Z^3$  in the region of interest, one may consider using a standard one-sample t-test to decide whether that voxel is active or not.  Given a series of measurements  $v_n(1), v_n(2) \cdots, v_n(N)$ from voxel $n$,  the goal is to decide whether $\mu_n$ is zero or strictly positive, i.e., to decide between the hypotheses
\begin{align*}
&\mathbf{H_0} \colon \mu_n = 0 \\ 
&\mathbf{H_1} \colon \mu_n > 0.
\end{align*}
To decide which of the hypotheses is true, one passes to another random variable, which is called as a $t$-statistic, and distributed with Student's $t$-distribution of $N$ degrees of freedom. In order to obtain a $t$-variable,  the population mean $\hat \mu_n$ and an unbiased estimate of the variance $s^2_N$ are computed from the samples as follows:
$$
  \hat \mu_n = \frac{1}{N}\sum_{k = 1}^N v_n(k)\hspace{2cm} s^2_N=\frac{1}{N-1}\sum_{k=1}^{N}\left(v_n(k) - \hat \mu_n\right)^2.
$$
Then the ratio $t = \displaystyle \frac{\hat \mu}{\sqrt{s^2_N/N}}$ has a Student's $t$-distribution with $N$ degrees of freedom. A preselected extremely unlikely region is used to reject the null hypothesis.  In our case one would use the so called one-sided tail as in Figure \ref{ttest}, since the mean is assumed to be nonnegative. This region is chosen to have a small probability of occurrence, a typical value for the significance level $\alpha$ being $0.05$, which is also the false-positive rate, i.e., the chance that null hypothesis will be rejected incorrectly. 

\begin{figure}
\begin{center}
\subfigure[]{{\includegraphics[width=7.5cm]{./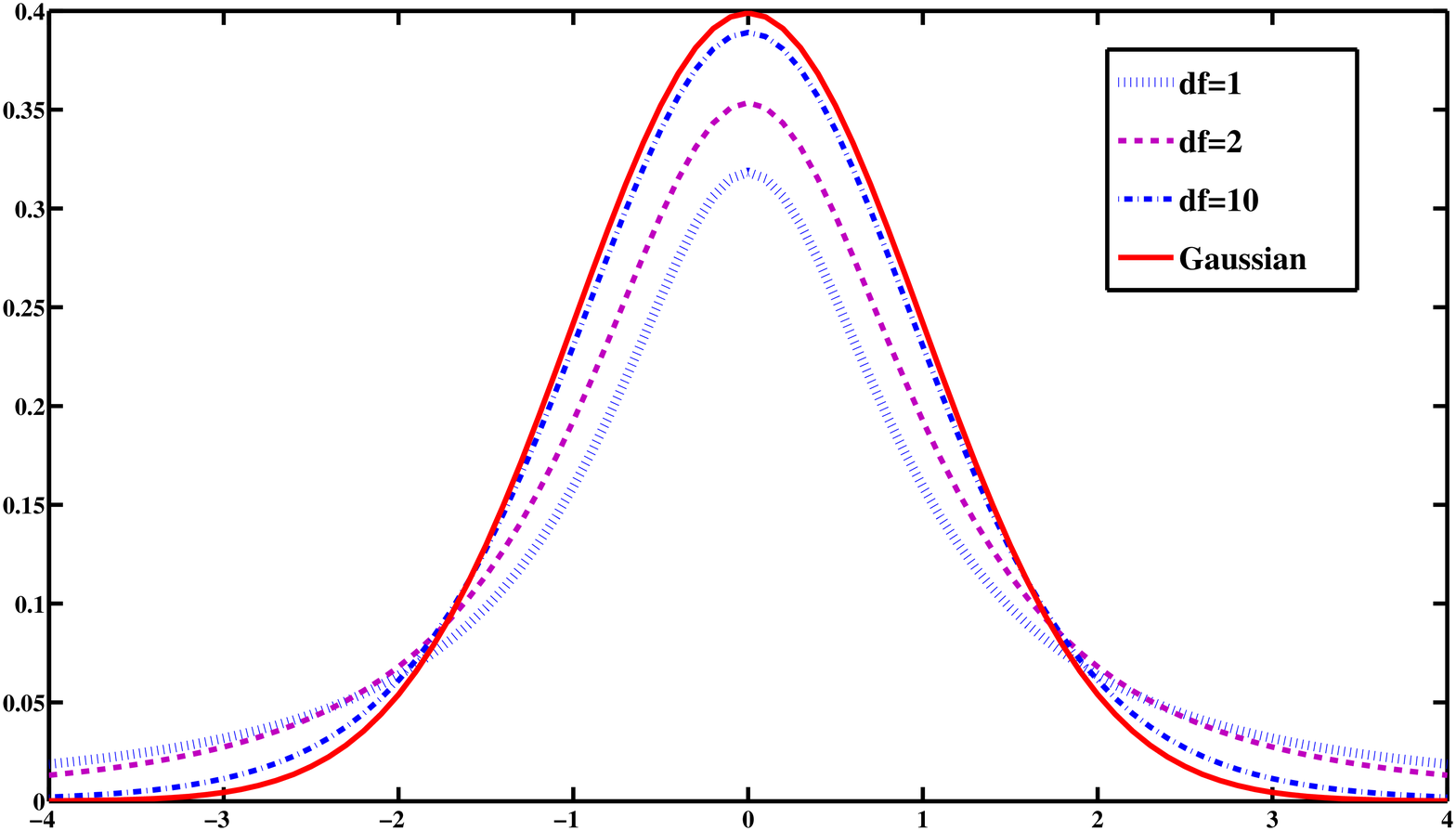}}}
\subfigure[]{{\includegraphics[width=7.5cm]{./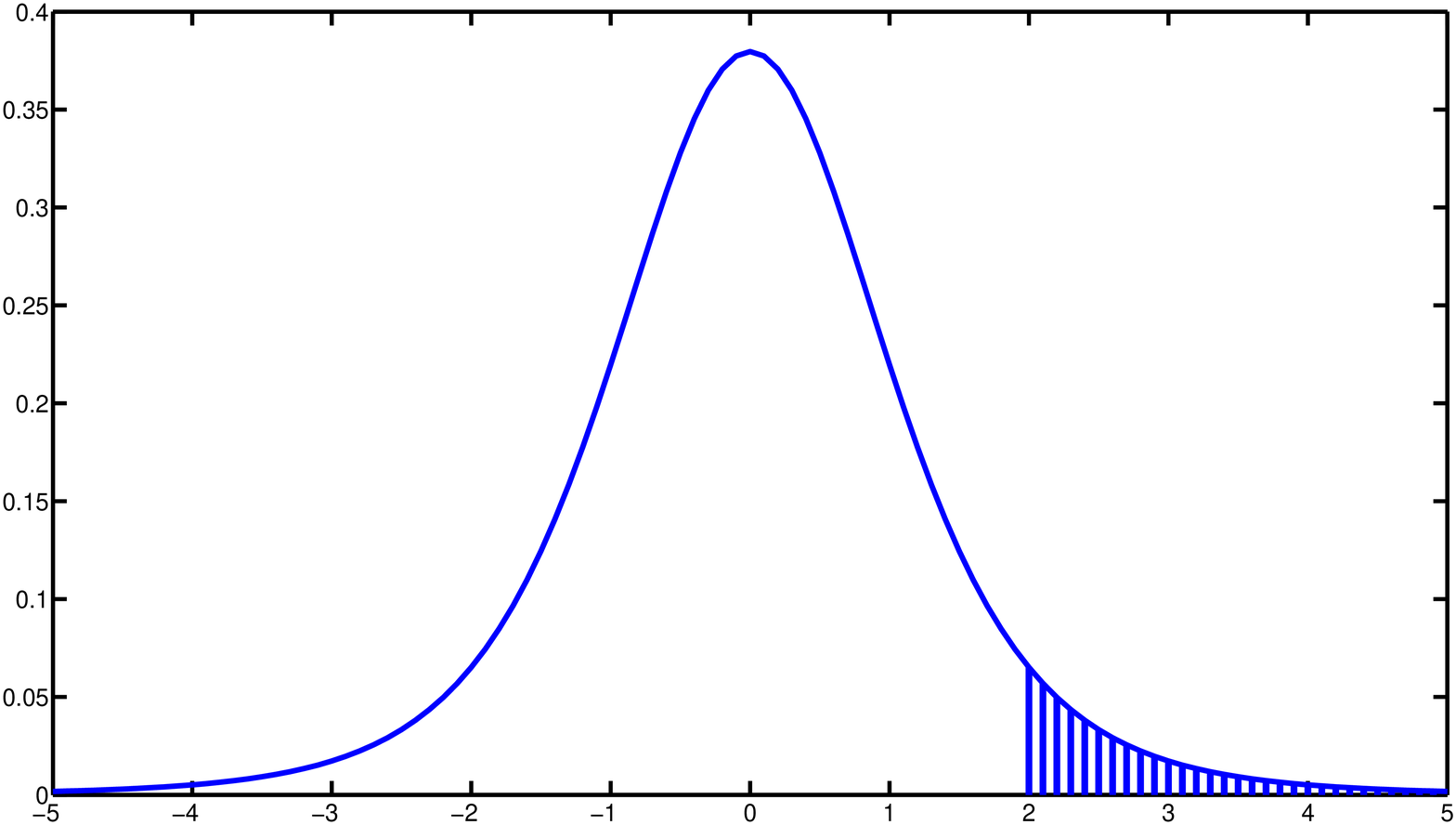}}}
\end{center}
\caption{(a) The density functions of Student's $t$-distribution with $n=1, 2$ and 10 degrees of freedom, respectively, and the standard Gaussian distribution, which can also be viewed as a $t$-distribution with infinitely many degrees of freedom. (b) The 5 \% region to reject the null hypothesis, for number of degrees of freedom $n=5$. }
\label{ttest}
\end{figure}

\subsection{The general linear model}
Although the model that is described in subsection \ref{one_sample_t_test} may be helpful for illustration purposes,  it was not realistic, since data from a real voxel is never silent, due to the highly active nature of the human brain. That is why one can observe only a slight increase in the activity of a voxel in response to stimuli.  In Figure \ref{fig_patterns_and_regressors} data from a real experiment by Haxby et al. \cite{haxby} are plotted, in which signals from 577 voxels are averaged for the correlation to be visually detectable.  In this setting, the question becomes whether there is a positive correlation between the stimulus and the response from the voxel and the decision is less straightforward. Fortunately, there exist straightforward extensions of the one sample $t$-test into this setting. 

\begin{figure}
 \centering
 \includegraphics[width=14cm]{./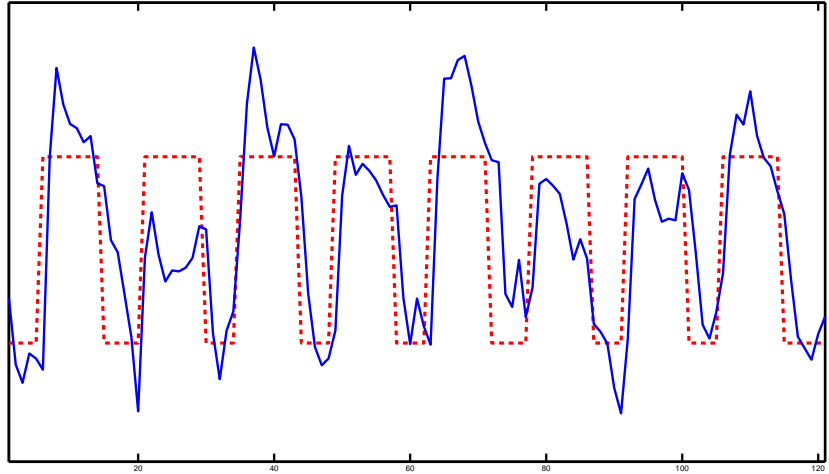}
 \caption{The stimuli and fMRI response.  Data from a visual stimuli experiment by Haxby et.al \cite{haxby}. The dashed line represents the on/off timing of the visual stimuli, and the solid line is the fMRI response, averaged over 577 voxels from the VT cortex.  The linear trend in the  stimuli is also removed. The correlation between the stimuli and the response is clearly visible in the figure, although in the signal from a single voxel the correlation would be virtually impossible to see.  }
 \label{fig_patterns_and_regressors}
\end{figure}

Let $v_n(1), v_n(2), \cdots, v_n(N)$ be the time samples obtained from the voxel $n$ at times $t = 1,2,\cdots, N$, and let us represent these samples as an $N\times 1$ column vector,
$$
\boldsymbol v_n = \left[ \begin{array}{c} v_n(1) \\ v_n(2) \\ \vdots\\ v_n(N) \end{array} \right]. 
$$

In a typical setting the subject is presented
with stimuli during certain intervals for a certain amount of time.
Let the voxel location $n$ be fixed, and
consider the time series $v(t)$, denoted as simply $v$.  We set up a model,
known as the \emph{general linear model}, or GLM, to analyze the time course of
this voxel:
\begin{equation*}
  \mbf{v}_n = \mbf X \boldsymbol{\beta} + \mbf e_n
\end{equation*}
where $\mbf X$ is a matrix each column of which is the expected time course if the subject
had responded only to a particular type of stimulus, with different stimulus type time courses given by the different columns of $\mbf X$. After the linear regression,
each component of the estimate of  $\beta$ will correspond to the level of
response to the corresponding category. 

A simple example for a single, on-off type stimulus with only eight time points can be given as
follows: Let us present the subject with stimuli for two time points, and then
give a rest for the next two time points, and repeat this procedure one more time.  Then a
simple choice for our model would be 
\begin{equation}
 \label{glm_example}
\mathbf v_n =  
 \left[ \begin {array}{rrr}
1&t_0&1\\\noalign{\smallskip}1&t_1&1\\\noalign{\smallskip}1&t_2&-1\\\noalign{\smallskip}
1&t_3&-1\\\noalign{\smallskip}1&t_4&1\\\noalign{\smallskip}1&t_5&1\\\noalign{\smallskip}
1&t_6&-1\\\noalign{\smallskip}1&t_7&-1\end {array} \right]  
\left[ \begin {array}{c}
{\beta_{0,n}}\\\noalign{\medskip}{\beta_{1,n}}\\\noalign{\medskip}{\beta_{2,n}}\end {array} \right]  
+ \mathbf e
.
\end{equation}
The constant vector in the first column accounts for the mean.  The second
column is the time vector, and its coefficient $\beta_{1,n}$ accounts for the linear trend.  The last column corresponds to the actual stimulus, having value 1 when the stimulus is on, and $-1$ when it is off.   The only parameter of interest here would be
$\beta_{2,n}$, corresponding to this last column, which would be interpreted as the magnitude of response from the current
voxel to the stimulus train.  

Then this analysis is repeated for each voxel, and the resulting map 
$\left\{\boldsymbol{\beta}_{2,n}\colon n\in R \right\}$  is then the activation parameter map for the given stimulus, where $R$ is the region of interest.  Significance of the response can be determined with the $t$-test, and the voxels passing the test are marked to be active. This is illustrated with sample time courses from a real experiment, in Figure \ref{glm_illustration}.

\begin{remark}
A more advanced approach would be to convolve the box functions in the columns of interest in  $\mathbf X$ with what is called as the \emph{hemodynamic response function}, which
makes it resemble the actual expected time course more, taking account the
dynamics of the blood-oxygenation in the brain.  See Figure {\rm\ref{hemodynamic_response}} for the plot of the convolved regressors and the hemodynamic response function, and \rm{\cite{sarty}} for more information. 
\end{remark}
\begin{figure}[h]
 \centering
 \includegraphics[width=12cm]{./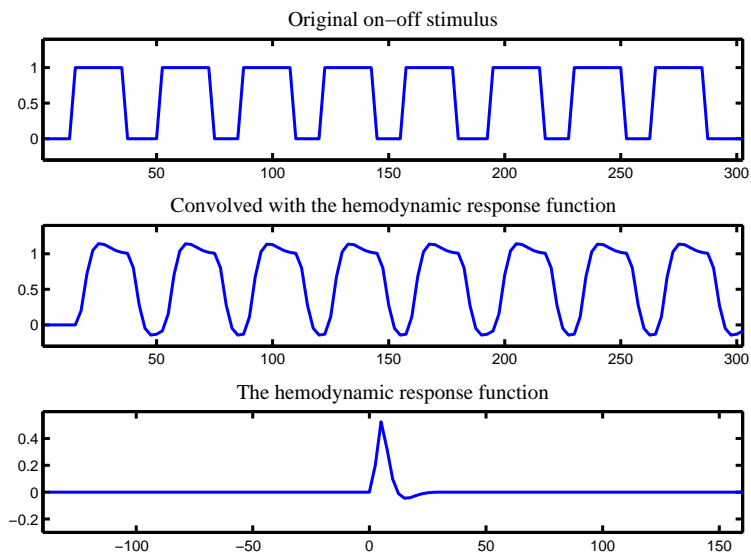}
 \caption{Original on-off stimulus (top),  convolved with the homodynamic response function (middle), and the hemodynamic response function (bottom).}
 \label{hemodynamic_response}
\end{figure}

\begin{figure}
 \centering
 \includegraphics[width=15cm]{./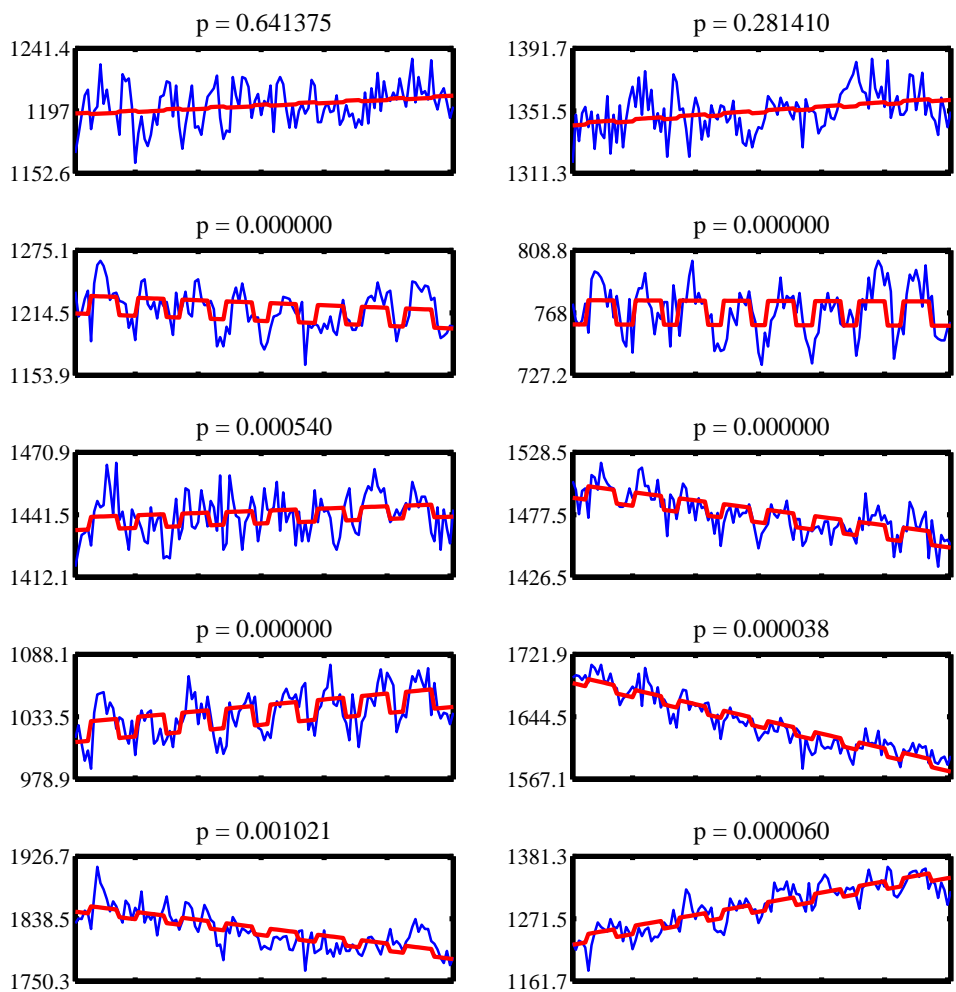}
 \caption{Plots of time series from various voxels, from the experiment by Haxby et al. \cite{haxby}.  Each voxel is fitted to the model, as a linear combination of a straight line and the box train function corresponding to the on-off stimuli.  A $t$ variable is derived and corresponding $p$ values are computed.  The smaller values of $p$ imply a stronger rejection of the null hypothesis.  }
 \label{glm_illustration}
\end{figure}

\subsubsection{Statistical testing}
The general linear model (GLM) for the time course of the voxel $n$ is
\begin{equation*}
  \mbf{v}_n = \mbf X \boldsymbol{\beta} + \mbf e, 
\end{equation*}
where  $\boldsymbol{\beta}$ is an $N\times 1$ vector of unknown parameters, and $\mathbf e$ is an $N\times 1$  Gaussian random vector with zero mean and unknown variance. The matrix $\mathbf X$ is $N\times L$, which is known beforehand and is called the \emph{design matrix}.  

Usually not all components of $\boldsymbol{\beta}$ are of interest.  One usually needs to know whether the quantity $\mbf{c}^{\operatorname{T}}\boldsymbol{\beta}$ is zero or strictly positive, and $\mbf c$ is called the \emph{contrast vector}. For the example of (\ref{glm_example}), the contrast vector would be taken as $\mbf{c}^{\operatorname{T}} = [0\ 0\ 1]^{\operatorname{T}} $, indicating that one is interested in whether the last column of the design matrix has positive correlation with the data, after removing the effects of the first two columns.   In another case when one is interested in whether column $i$ of the design matrix has more correlation with the data than column $j$, the contrast vector $\mbf c$ would be a vector of all zeros except for the $i$th and $j$th entries, which would be $1$ and $-1$, respectively. 

We observe a realization of the random vector $\mbf v_n$, and  would like to decide between the two hypothesis about $\mbf{\boldsymbol{\beta}}$:
\begin{align*}
&\mathbf{H_0} \colon \mbf{c}^{\operatorname{T}}\boldsymbol{\beta} = 0 \\ 
&\mathbf{H_1} \colon \mbf{c}^{\operatorname{T}}\boldsymbol{\beta} > 0.
\end{align*}
In order to do a $t$-test, we first need to obtain the $t$-variable. One starts with computing an unbiased estimate for $\boldsymbol{\beta}$ by an ordinary least squares formula
\[
\hat{\boldsymbol{\beta}} = (\tran{\mbf{X}} \mbf{X})^{-1}\mbf{X} \mbf{y},
\]
implicitly assuming the columns of $\mbf X$ are linearly independent. Then an estimate for the error would be 
\[
\hat{\mbf{e}} = \mbf{y} - \mbf{X}\hat{\boldsymbol{\beta}}.
\]
Now if one defines
 \begin{align}
  g_n & = \tran{\mbf{c}}\hat{\boldsymbol{\beta}},\\
  s^2_n  &= {\hat{\mathbf e_n}}^{\operatorname T}{\hat{\mathbf
e_n}}\ \mathbf c ^{\operatorname T}(\mathbf X^{\operatorname T} \mathbf
X)^{-1} \mathbf c. 
 \end{align}
 then the scalar random variable $g$ would have a Gaussian distribution with mean $\tran{\mbf{c}}{\boldsymbol{\beta}}$,   and $s^2$ will follow a Chi-square distributions with $J = N - \operatorname{rank}(X)$ degrees of freedom, and they are independent \cite{vandeville0406}.  Following this, one can obtain a $t$ variable by
$$
t_n = \frac{g_n}{\sqrt{s^2_n/J}}, \ \ \text{with} \ J =
N - \operatorname{rank}(\mathbf X),
$$ 
and one can then proceed similarly to the $t$-test of Subsection \ref{one_sample_t_test} for a statistical test of the significance of the activation.

\subsection{Multiple comparison problem}
When multiple statistical tests are to be performed based on the same data set, the problem of \emph{multiple comparison} needs to be dealt with.  In the case of fMRI, a typical dataset has of the order of $10^4$ voxels.  A significance level of $\alpha =  0.05$ would result in  $10^2$ to $10^3$ false detections; this number may be as large as the total number of voxels in the entire region of activation! In general, for a dataset of $K$ voxels, one expects to get $\alpha K$ false positives.   One solution offered to remedy this problem is to use the \emph{Bonferroni correction}, which is a conservative method to reduce the expected number of total false positives by reducing $\alpha$, and considering voxels in (sub)collections rather than individually.  For a total number of $K$ voxels, the Bonferroni correction asks to reduce the significance level from $\alpha$ to $\alpha/K$, which in turn reduces the expected number of total false positives from $\alpha K$ to $\alpha$.  By the \emph{union bound}, getting a single falsely detected  voxel within the set of $K$ voxels would have a rate smaller than $\alpha$. For a set with, e.g., $10^2$ voxels and an initial $\alpha$ selected to be 0.05, Bonferroni correction would ask to reduce $\alpha$ from $5\times 10^{-2}$ to $5\times10^{-4}$. Although it can be improved by restricting the region of interest to a smaller set of voxels,  this correction has the obvious drawback of extremely reduced sensitivity for fMRI datasets, almost to the level of detecting no activation \cite{surfing}: although the Bonferroni argument reduces the number of false positives, it \emph{increases} the number of false negatives. Such small choices of sensitivity rates make the tests useless, because fMRI data sets are very noisy and there are not enough samples to get detections at such a conservative specificity. 

\section{Spatial filtering for improving statistical power}
\subsection{Statistical parametric mapping and wavelets}

The problem with the Bonferroni correction is that it does not make use of the spatial correlation between voxels \cite{surfing}. There are methods that remedy this situation by essentially performing a transformation, which practically maps the large number of noisy and highly correlated voxels into a small number of uncorrelated and less noisy transform domain coefficients.  The most widely used approach in this category is the Statistical parametric mapping (SPM) package \cite{SPM}, in which the essential step is prefiltering by a Gaussian window, which corresponds to going to the Fourier domain, and throwing out the high frequency components and reconstructing.  Note that a spatial convolution with a Gaussian window is equivalent to multiplying the Fourier transform of the data with the Fourier transform of the Gaussian window; since this is another Gaussian, this multiplication is a form of weighted thresholding.  As depicted in Figure \ref{squarewave}, Gaussian and wavelet filtering can be viewed as two different forms of the same idea. In Gaussian filtering, the coefficients to be discarded are the high frequency coefficients that are predetermined, which makes it linear, while in wavelet filtering, coefficients are to be discarded are determined by thresholding, therefore it is a form of nonlinear filtering. 

\begin{figure}
\begin{center}
\subfigure[]{{\includegraphics[width=7.5cm]{./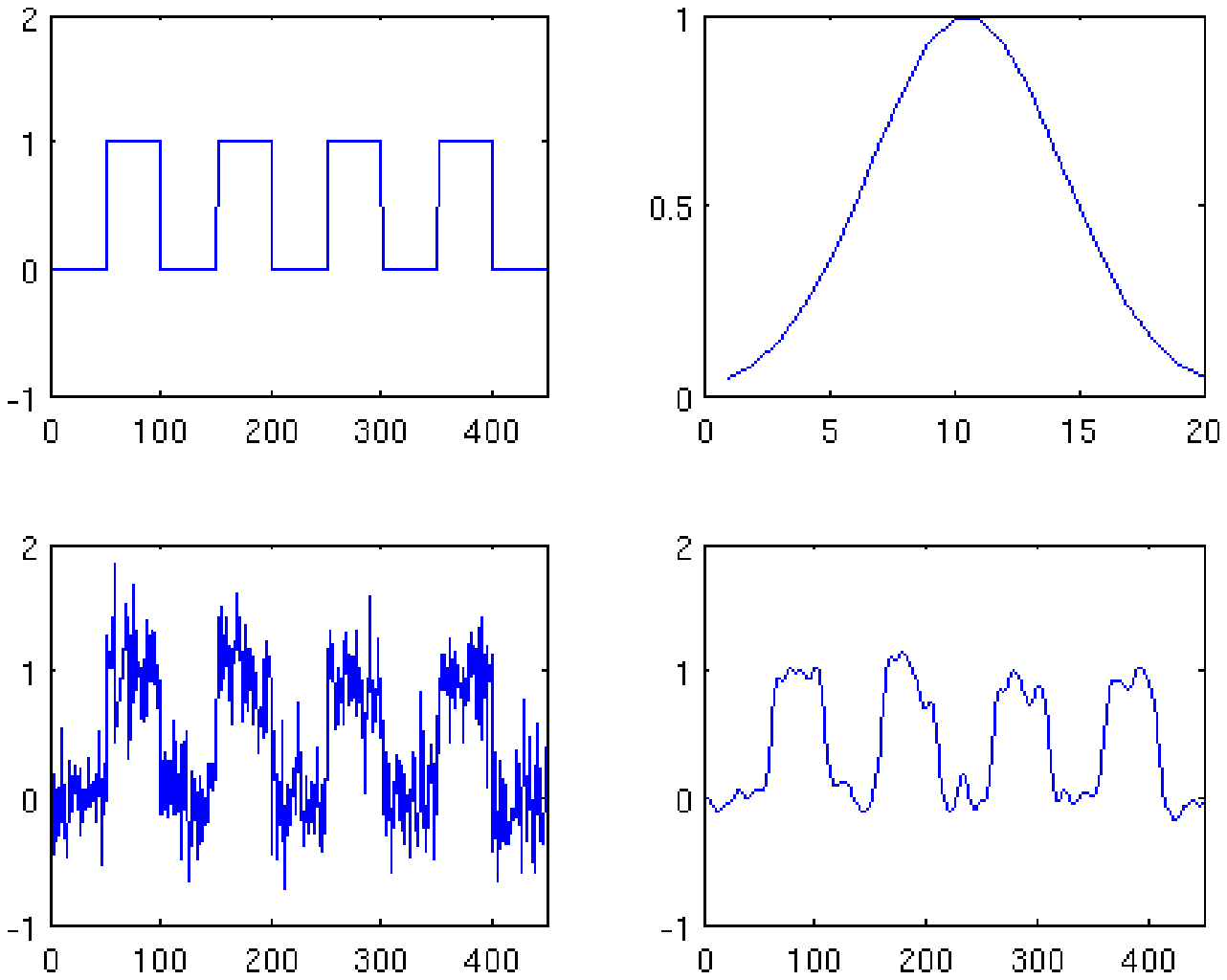}}}
\subfigure[]{{\includegraphics[width=7.5cm]{./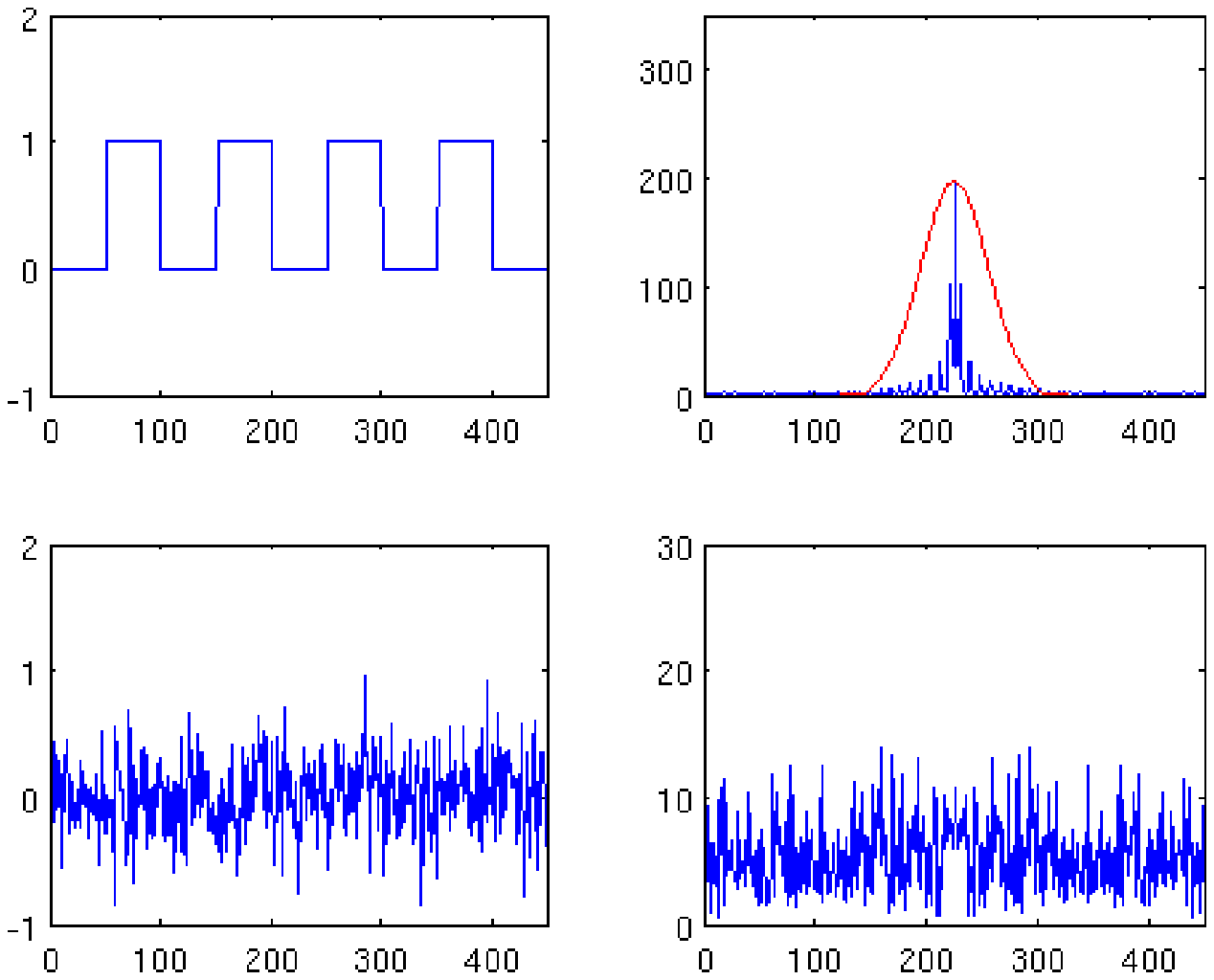}}}
\subfigure[]{{\includegraphics[width=7.5cm]{./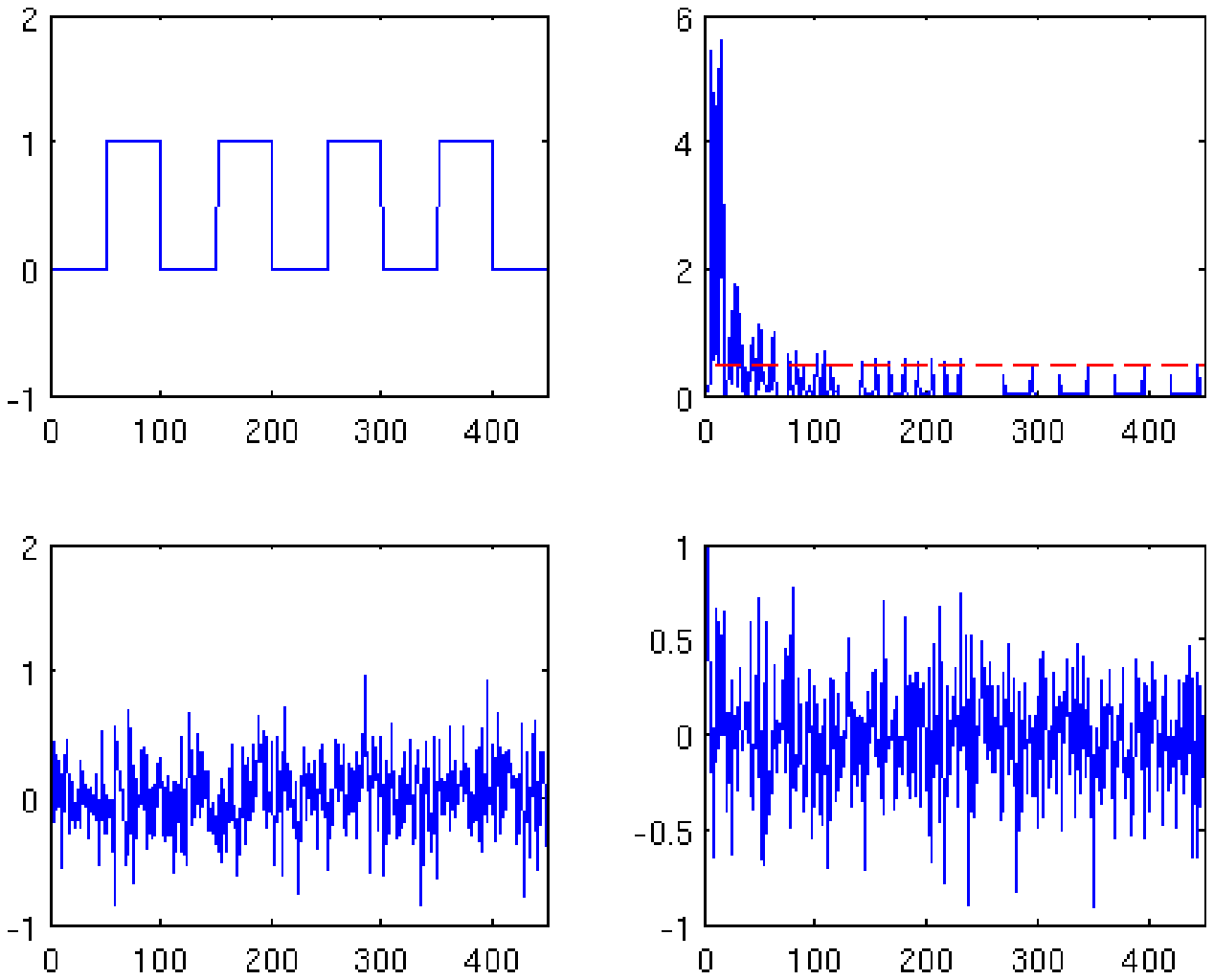}}}
\end{center}
\caption{Depiction of noise reduction by Gaussian and wavelet filtering.  Spatially correlated functions have their transform domain concentrated to a few coefficients, while noise processes have their transforms spread out to all coefficients uniformly. (a) First Row: A square wave function, and a Gaussian which will be used in filtering, Second Row: Noisy (left) and filtered (right) versions of the square wave function.  (b) First row: The square wave function and its Fourier transform, overlaid by the Fourier transform of the Gaussian window, Second Row: White noise and its Fourier transform. (c) First Row: The square wave function and its Wavelet transform, Second Row: White noise and its wavelet transform.}
\label{squarewave}
\end{figure}

\label{sec:theory}

\subsection{Gaussian versus wavelet filtering }
While Gaussian filtering has the advantages of being a well established linear noise reduction method, its main disadvantage is that it destroys all fine spatial details, which might be important in fMRI images. In Figure \ref{wavelet_filtering}, this phenomenon is illustrated with a single dimensional example.  That is the main motivation of exploring wavelet based methods, which are the subject of the rest of this chapter. 

\begin{figure}
 \centering
 \includegraphics[width = 15cm]{./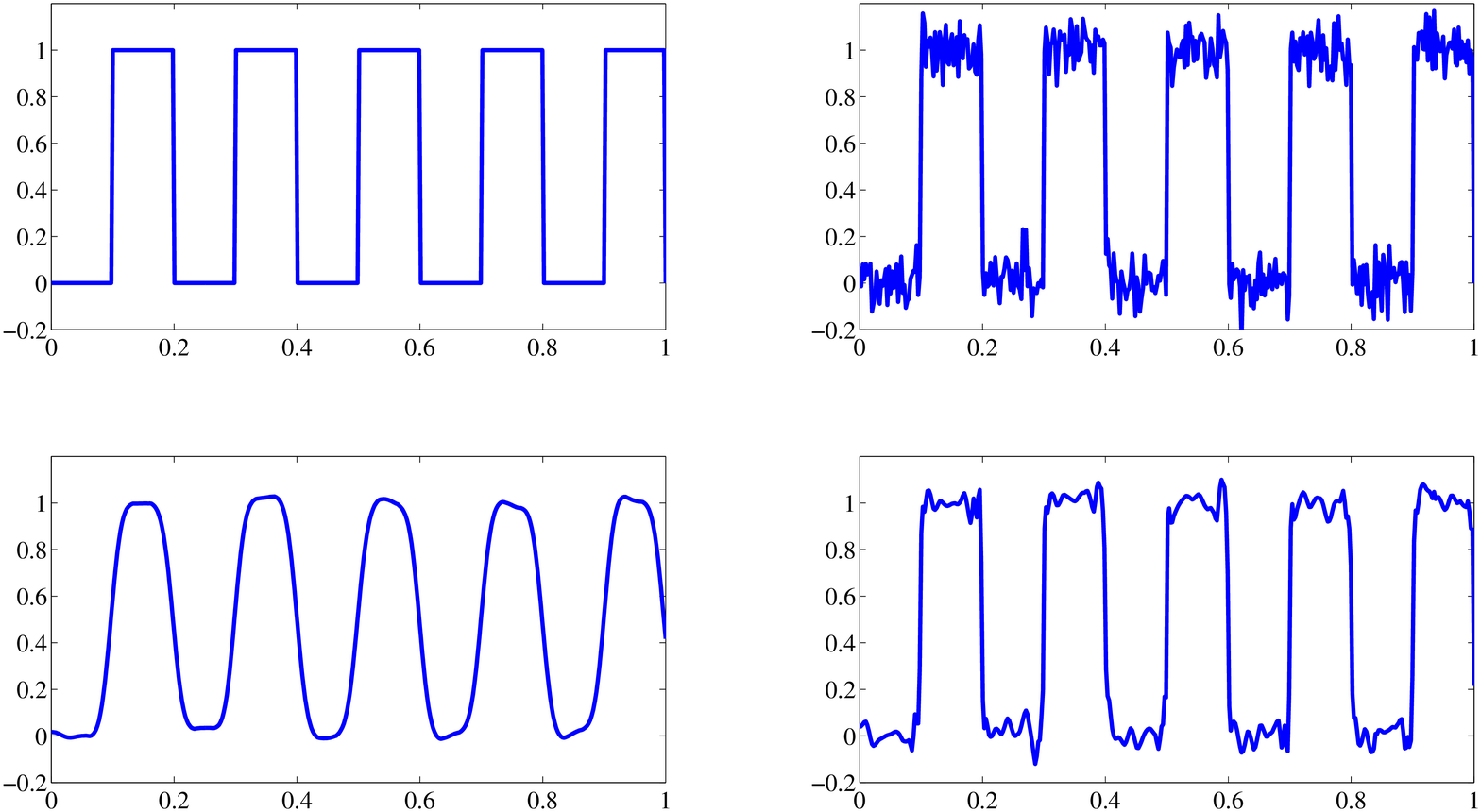}
 \caption{Wavelet denoising. A square wave signal is contaminated with noise. In the second row, it is denoised by Gaussian filtering and by wavelets, respectively.  Although Gaussian filtering achieves a visually detectable noise reduction, it also destroys the fine spatial details, which corresponds, in this example, to the steepness of the edges; this steepness is much better preserved by wavelet denoising.}
 \label{wavelet_filtering}
\end{figure}

\subsection{Classical wavelet-based analysis}
The classical wavelet-based methods \cite{ruttimann9801}, mainly transfer each spatial volume to the wavelet domain, perform a $t$-test on each wavelet coefficient, and reconstruct the volumes back by only using the wavelets whose coefficients survive the $t$-test. 

Let us denote an fMRI
data set with $v_{\mathbf n}(t)$, where $\mathbf n \in \mathbb Z^3$ is the
spatial index, and $t\in\mathbb Z$ is the temporal index.  With a simplified
notation, the wavelet transform corresponds to representing the data under the form
$$
v_\mathbf n (t) = \sum_\mathbf k w_\mathbf k(t) \psi_\mathbf k (\mathbf n),
$$
where $\{\psi_\mathbf k\}$ is the wavelet basis.  The basis functions
are translated and dilated versions of some prototype in the case of standard wavelets, but they take more arbitrary forms in the adapted case.

Now let $\mathbf w _\mathbf k$ be the vector of wavelet coefficients
corresponding to $\psi_k$; i.e., 
$$\mathbf w_ \mathbf k = [w_\mathbf k (1)\ 
 \cdots\  w_\mathbf k (N_t)]^T,$$ where $N_t$
is the total number of time samples.  Due to linearity of the wavelet transform, we can write the same general linear model
as in the spatial domain
$$
\mathbf w_\mathbf k = \mathbf X \mathbf y
_\mathbf k + \mathbf e_\mathbf k,
$$ 
where $\mathbf X$ is the $N_t \times L$
design matrix, $\mathbf y_\mathbf k$ is the $N \times 1$ vector of unknown
parameters, and $\mathbf e_\mathbf k$ is the residual error.   Assuming the noise to be independent and identically
distributed Gaussian, the unbiased estimate of $\mathbf y_\mathbf k$
is given by
$$
\hat{\mathbf y}_\mathbf k = (\mathbf X^{\operatorname T} \mathbf
X)^{-1}\mathbf X^{\operatorname T} \mathbf w_\mathbf k .$$  
Corresponding to each index $\mathbf k$ of the wavelet basis, we obtain two
scalar values:
\begin{align*}
g_\mathbf k &= {\mathbf c}^{\operatorname T} \hat {\mathbf y}_\mathbf k,\\
s^2_\mathbf k &= {\hat{\mathbf e}_\mathbf k}^{\operatorname T}{\hat{\mathbf
e}_\mathbf k} \mathbf c ^{\operatorname T}(\mathbf X^{\operatorname T} \mathbf
X)^{-1} \mathbf c,
\end{align*}
where $\mathbf g_\mathbf k$ and $\mathbf s_\mathbf k$ follow a Gaussian and
Chi-squared distribution, respectively, and $\mbf c$ is the contrast vector.  From these one can obtain a $t$ value
for each wavelet coefficient $\mathbf k$:
$$
t_\mathbf k = \frac{g_\mathbf k}{\sqrt{s^2_\mathbf k/J}}, \ \ \text{with} \ J =
N_t - \operatorname{rank}(\mathbf X),
$$ 
which can be tested against a threshold $\tau_w$, which is chosen in accordance
with the desired significance level.  After testing, the
detected coefficients are reconstructed as
\begin{equation}
\label{eq:recon}
\mathbf r_\mathbf n = \sum_\mathbf k T_{\tau_w}(t_\mathbf k)g_\mathbf k
\psi_\mathbf k(\mathbf n),
\end{equation}
where $T$ is the thresholding function corresponding to the two
sided $t$ test; i.e., $T(t_\mathbf k) = 1$ if $|t_\mathbf k|\geq \tau_w$, and
zero otherwise. The volume $r_\mathbf n$ contains many nonzero voxels, each of
which is a function of many voxels from the original data.  One must rely on heuristic thresholds on $r_\mathbf n$ to
obtain acceptable detection maps. Moreover, $r_\mathbf n$ does not have a direct
statistical interpretation.  These disadvantages are overcome with the WSPM, as explained in the next section.

\subsection{WSPM: Joint spatio-wavelet statistical analysis}

Wavelet based Statistical Parametric Mapping (WSPM) is a method proposed by Van De Ville et al. \cite{surfing, vandeville04,
vandeville0406, vandeville07}, which is a modification of SPM where the denoising step is performed by thresholding in the spatial wavelet domain.  This makes the typical advantage of wavelets, which is about reducing noise while keeping high frequency detail information, apparent in
the results. The underlying theorem guarantees control over the false-positive rate by a bound of the null-hypothesis rejection probability. Moreover, empirical results show similar sensitivity to that obtained by SPM with improved spatial detail.  The resulting map of active voxels can be seen to align with the
cortex, as a demonstration of preserving the detail information. 

The main idea in the joint spatio-wavelet statistical analysis is
to perform two consecutive thresholding operations: first in the wavelet domain
and then  in the spatial domain. There are two corresponding threshold
parameters, 
$\tau_w$ and
$\tau_s$, to be determined  \cite{vandeville04, vandeville0406}.  A spatial map
is obtained after the first thresholding as in (\ref{eq:recon}). Then $r_\mathbf
n$ is weighted by $1/\sum_\mathbf k \sigma_\mathbf k |\psi(\mathbf n)|$ and
thresholded by $\tau_s$.  That is, the set of voxels that are declared to be
active would be
$$
\left\{\mathbf n \colon \frac{\left|\sum_\mathbf k T_{\tau_w}(t_\mathbf
k)g_\mathbf k
\psi_\mathbf k(\mathbf n)  \right|}{\sum_\mathbf k \sigma_\mathbf k
|\psi(\mathbf n)|}\geq \tau_s\right\}.
$$

Given the desired significance level $\alpha$,  an optimal choice for $\tau_s$
and
$\tau_w$, which
minimizes
the approximation error between the reconstruction from the fitted parameters
and two times
thresholded reconstruction,  can be computed to be
$$
 \tau_w =
\sqrt{-W_{-1}\left(-\frac{\alpha^2\pi}{2}\right)},\ \ \ \tau_s = 1/\tau_w,
$$
where $W_{-1}$ is the $-1$-branch of the Lambert-$W$ function
\cite{vandeville04}. 

\begin{figure}
 \centering
 \includegraphics[width=14cm]{./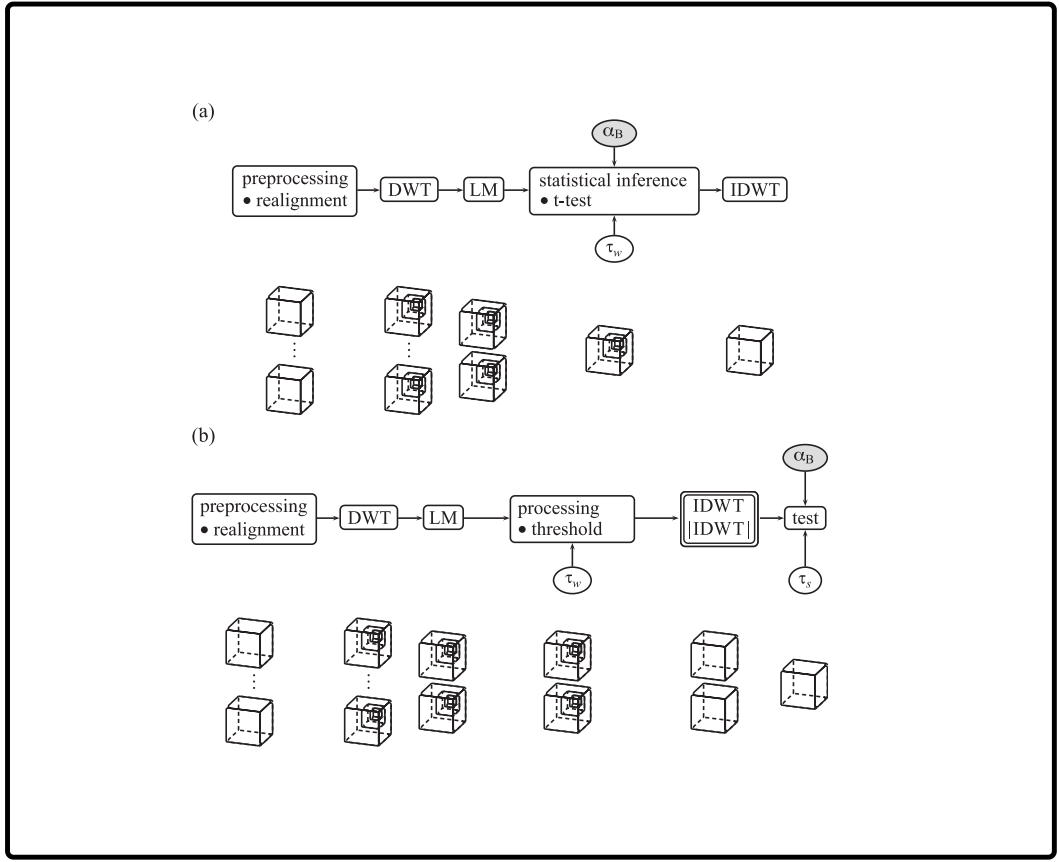}
 \caption{Diagram illustrating the wavelet-based approach proposed in \cite{vandeville0406}.  a) Classical wavelet based method, and b) Integrated wavelet based method (WSPM).  
 In both methods,  the wavalet transform of each volume is computed (DWT), the variables $g_\mathbf k$ and $s^2_\mathbf k$ are computed for each wavelet coefficient (LM), a $t$ variable is obtained for each wavelet coefficient and the $t$-test is performed, and the inverse wavelet transform computed after throwing out the coefficients failing the test (IDWT).  In the classical method, one must rely on heuristic thresholds for the reconstructed volume.  In the integrated framework, the thresholds $\tau_w$ and $\tau_s$ are determined together, as a function of the input sensitivity parameter $\alpha_B$. In the integrated method, one also needs to compute a reconstruction with the absolute value of the wavelets, before the spatial thresholding.     Figure replicated from \cite{vandeville0406}.}
 \label{fig:classical_vs_vspm}
\end{figure}

\subsection{Anatomically adapted wavelets}
In the methods mentioned above methods, the wavelet transforms are either performed slice by
slice as two dimensional wavelet transforms, or are applied to the whole
volume as a three dimensional wavelet transform.  In either case, the domain of
the signals are assumed to be the
rectangular. However, in reality the activity takes place in a subset
corresponding to the brain cortex, which is a highly convoluted three
dimensional structure. This motivates us to obtain domain-adaptive wavelets for fMRI data, using the construction in Chapter 3.   We input the segmented brain cortex  to the algorithm, and use the resulting wavelets in the statistical framework for analyzing fMRI data.  We refer to these type of wavelets as \emph{anatomically adapted wavelets}. 



%



%

\section{Experimental results}
\label{sec:expresults}
\subsection{Simulated data}
We generated smooth functions over a domain consisting of concentric
circles, to represent the fine layered structure of the gray-matter cortical layer with different widths. We contaminated the data with white Gaussian noise, whose magnitude is decreasing from left to right, as shown in Figure
\ref{fig:res2}. 
The adapted wavelets showed an improvement in the
sensitivity, while keeping the specificity within the theoretical limits.  As the level of the wavelet decomposition increases, the sensitivity keeps
increasing when adapted wavelets are used, while it remains unchanged for
standard wavelets; this is illustrated by the comparison in  Figure \ref{fig:res3} .
\begin{figure}[htb]

\begin{minipage}[b]{1.0\linewidth}
  \centering
  \centerline{\epsfig{figure=./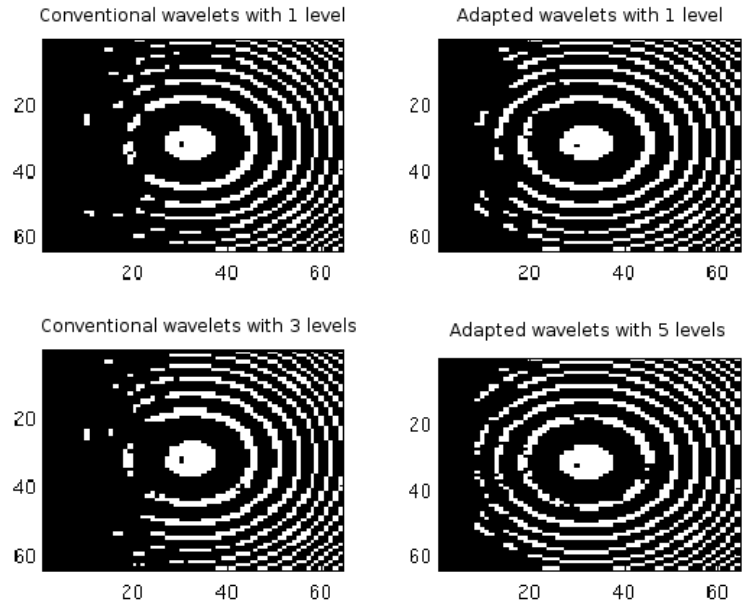,width=12cm}}
\medskip
\end{minipage}
%

\caption{The detected map of active voxels in an example with simulated data. The domain consists of concentric rings.  Gaussian white noise is superimposed on the data, with $\sigma$ kept constant within each ``column'', but as the ``row'' number $r$ increases, $\sigma$ decreases, with $\sigma r = \text{constant}$.  For $r=2$, $\sigma$ is such that the noise overpowers the signal completely; for $r=64$, $\sigma$ has decreased to a level where the signal is detected without problems.  The left column shows results with
standard orthogonal wavelets, and the right column shows the result with the 
anatomically adapted wavelets, which detect the signal in some locations where standard wavelets don't. In the second row the level of wavelet
decomposition is increased, and the performance of adapted wavelets has
increased correspondingly while the performance of standard wavelets remain unchanged.}
\label{fig:res2}
\end{figure}

\begin{figure}[htb]


\begin{minipage}[b]{1.0\linewidth}
  \centering
  \centerline{\epsfig{figure=./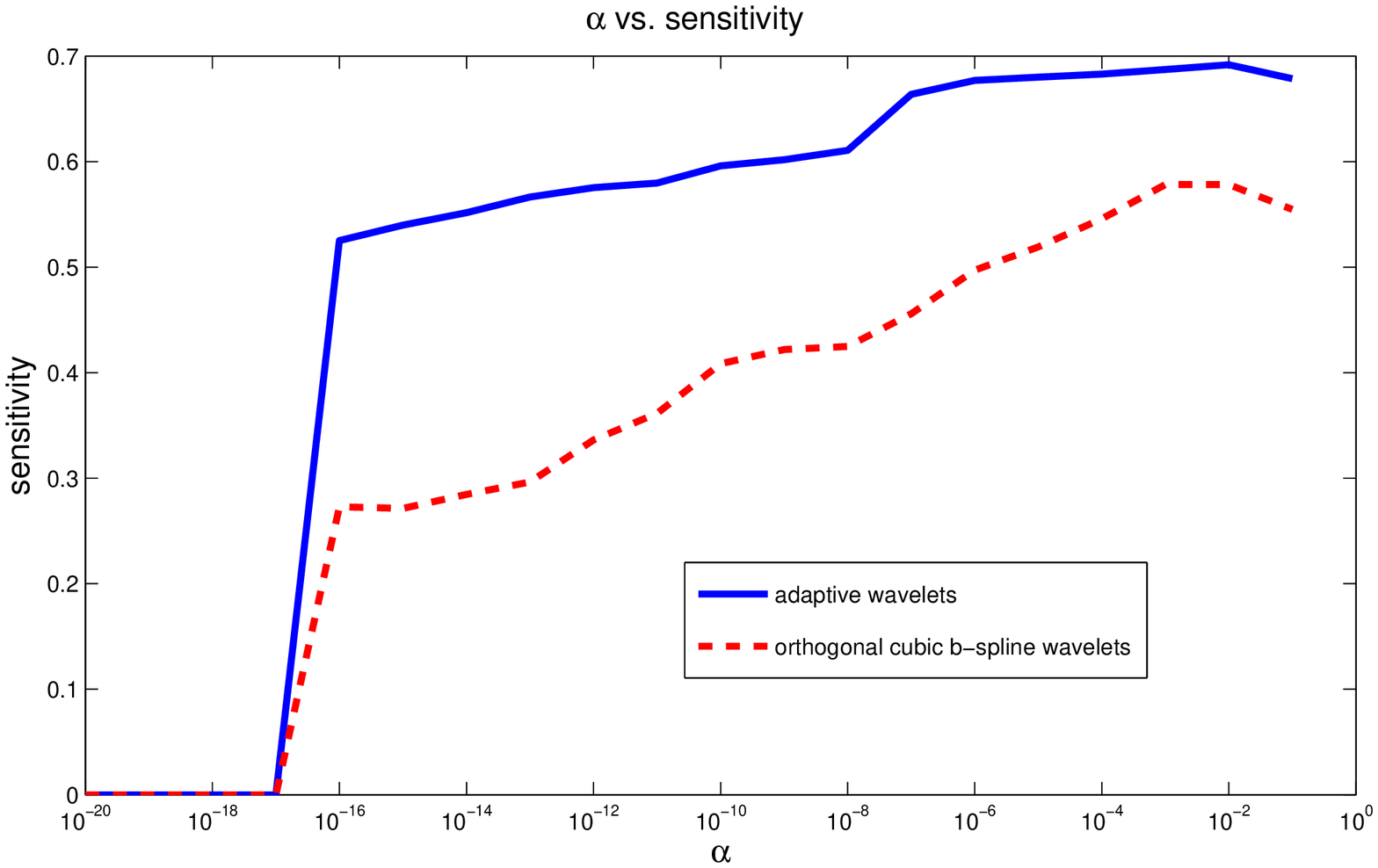,width=12cm}}
\end{minipage}
\caption{The ROC curves with standard (tensor-product orthogonal cubic B-spline
wavelets) and anatomically adapted wavelets. The $\alpha$-value represents the
type-I error control rate that is input to the algorithm.}
\label{fig:res3}
\end{figure}

\subsection{Real data}
We tested the proposed method with data obtained from a visual stimulation
experiment, with 16 slices of $128\times128$ voxels, the size of which is 1.8 mm $\times$ 1.8
mm $\times$ 5 mm. We performed segmentation with the SPM software package, and generated the adaptive
wavelets using the domain corresponding to the gray-matter layer. The thresholded binary domain is shown in Figure \ref{fig:cortical_mask}. Using the
standard
orthogonal wavelets, the analysis resulted in 1032 detected voxels,  with
the adapted wavelets it resulted in 1214 active voxels. In both cases the
sensitivity parameter $\alpha$ is taken to be 0.001. This suggests an
improved sensitivity, with a detection of larger number of voxels, as shown in
Figure \ref{fig:res4}. 
\begin{figure}
 \centering
 \includegraphics[width=14cm]{./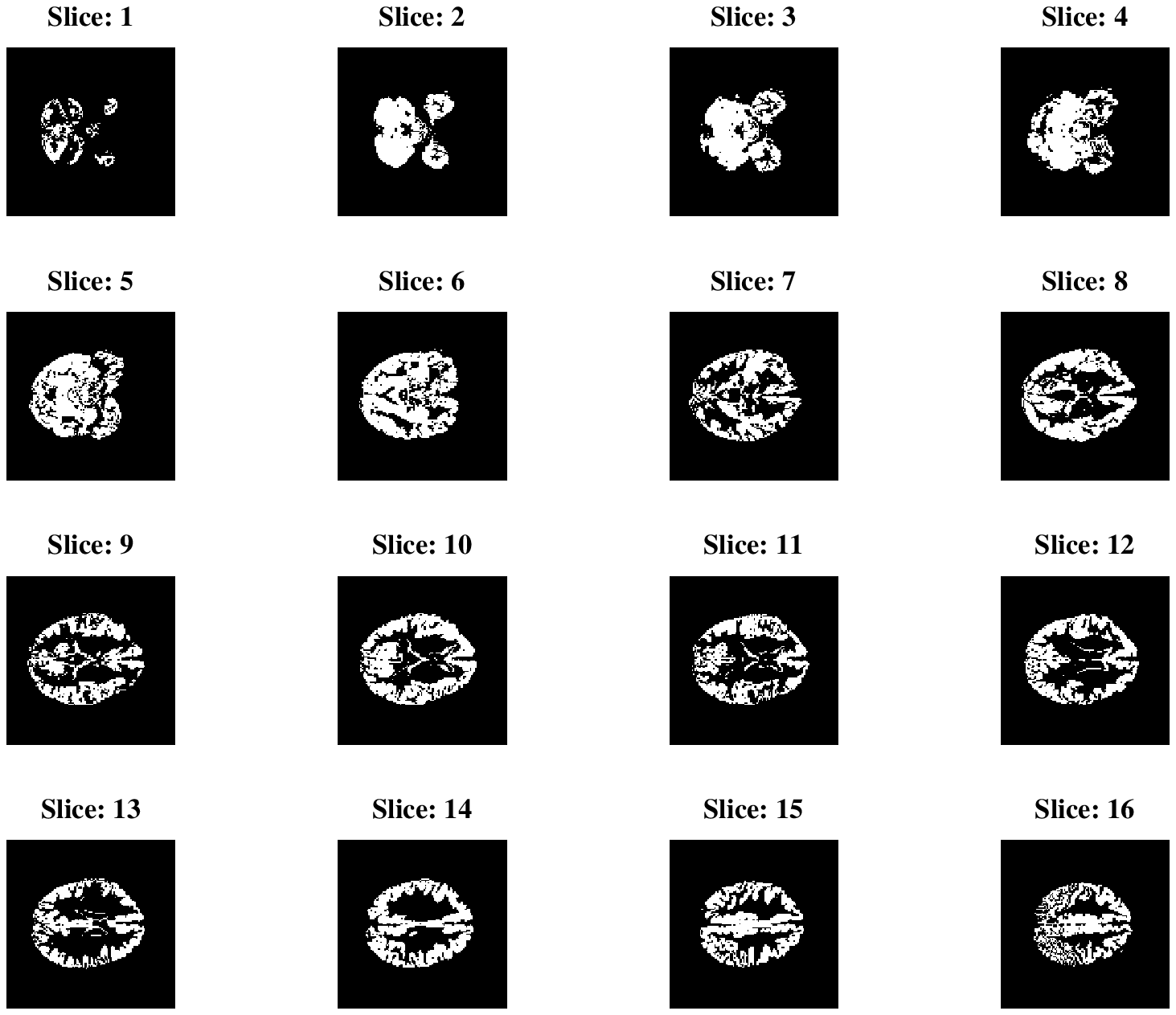}
 \caption{The binary mask used in the algorithm, which identifies the brain cortex.  The volume corresponding to the brain cortex is displayed slice by slice.  This map is obtained by thresholding the output of SPM's brain segmentation algorithm. This domain information is input to the wavelet construction algorithm, as a binary volume.  }
 \label{fig:cortical_mask}
\end{figure}

\begin{figure}[htb]
\begin{minipage}[b]{1\linewidth}
\centering
\centerline{\epsfig{figure=./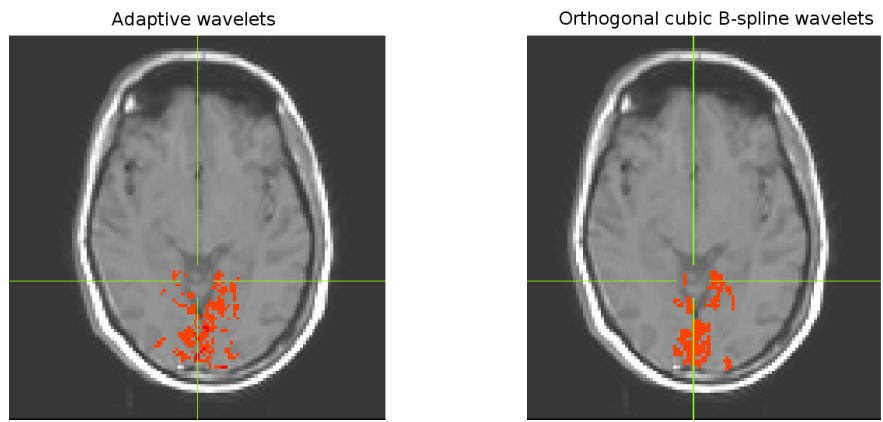,width=12cm}}
\end{minipage}
\caption{Slices from detected activation maps with real visual stimulation
data. The left image is obtained with adaptive wavelets, and the right image is obtained
with orthogonal cubic B-spline wavelets.}
\label{fig:res4}
\end{figure}

 \nocite{sweldens_siam}
\nocite{surfing} 
\nocite{vandeville0406}  \nocite{vandeville04} \nocite{sarty}
\nocite{daubechies}
\nocite{ruttimann9801}  \nocite{vandeville07}
\nocite{sweldens96}


\chapter{Summary and future work}
In this thesis, we constructed two and three dimensional wavelets for arbitrary domains, using the lifting scheme.  When the original domain of the signals to be analyzed has a significant proportion of boundary pixels or voxels,  our adapted wavelets demonstrate better sparsity properties and  a superior performance in denoising, compared to standard wavelets that have rectangular domains. 

In the construction, a nested grid structure is required to be defined on the domain. In defining the nested grid, we used a randomized algorithm.  This randomization gives us the chance to have multiple sets of wavelet bases on the same domain, which allows one to process the data multiple times and then take the average of the results.  This, in general,  improves the results whenever  it is possible to average the output of the wavelet application.  Our test with two dimensional images showed that this algorithm is nearly translation and rotation  invariant.

The new class of wavelets are then used in the brain imaging problem, after being adapted to the
anatomy of the brain cortex.  In the wavelet-based Statistical Parametric Mapping
framework,  we have observed an improved sensitivity, while
retaining the same amount of control over type-I errors, compared to wavelet transforms having rectangular domains. In simulated data, contrary
to what is observed with the standard wavelet transform, use of the adapted wavelets
shows clear improvement as the level of the wavelet decomposition increases.

As the high resolution fMRI scanners are becoming more widely available, spatial filtering tools like ours, which treat the brain cortex as an arbitrary three dimensional volume,  may become an alternative to the well-established spatial filtering tools that are currently used in brain imaging analysis.

\subsubsection{Future work}
In utilizing  multiple sets of wavelet bases, we processed the data separately with each individual non-redundant basis, and took the average of the results.  However, the average may include results from some bad realizations of the basis, which may deteriorate the performance. As an alternative, we will explore considering of the union of the multiply generated bases as a single overcomplete basis, and use it together with sparsity-constrained risk minimization algorithms. 

In another direction, we will test the performance of the anatomically adapted wavelets in multi-subject fMRI studies. 
\nocite{sweldens_siam} \nocite{unbalanced_haar}
\nocite{donoho} \nocite{donoho_transinv} \nocite{Sweldens_buildingyour}
\nocite{surfing} \nocite{sarty} \nocite{cohen_dahmen_devore} \nocite{sweldens95} \nocite{daubechies98} \nocite{fattal09} \nocite{coifman06} \nocite{cohen92} \nocite{dahmen97} \nocite{vasilyev00} \nocite{vandeville06} \nocite{ozkaya11}
\bibliographystyle{plain}
\bibliography{mybib}

\begin{thebibliography}{10}

\bibitem{christensen08}
Ole Christensen.
\newblock {\em {Frames and Bases: An Introductory Course (Applied and Numerical
  Harmonic Analysis)}}.
\newblock Birkhauser, illustrated edition edition, July 2008.

\bibitem{cohen92}
A.~Cohen, Ingrid Daubechies, and J.-C. Feauveau.
\newblock Biorthogonal bases of compactly supported wavelets.
\newblock {\em Comm. Pure Appl. Math.}, 45(5):485--560, 1992.

\bibitem{cohen_dahmen_devore}
Albert Cohen, Wolfgang Dahmen, and Ronald DeVore.
\newblock Adaptive wavelet methods for elliptic operator equations: convergence
  rates.
\newblock {\em Math. Comp.}, 70(233):27--75 (electronic), 2001.

\bibitem{donoho_transinv}
R.~R. Coifman and D.~L. Donoho.
\newblock Translation-invariant de-noising.
\newblock Technical report, Department of Statistics, Stanford University,
  1995.

\bibitem{coifman06}
Ronald~R. Coifman and Mauro Maggioni.
\newblock Diffusion wavelets.
\newblock {\em Appl. Comput. Harmon. Anal.}, 21(1):53--94, 2006.

\bibitem{dahmen97}
Wolfgang Dahmen.
\newblock Wavelet and multiscale methods for operator equations.
\newblock In {\em Acta numerica, 1997}, volume~6 of {\em Acta Numer.}, pages
  55--228. Cambridge Univ. Press, Cambridge, 1997.

\bibitem{daubechies}
I.~Daubechies.
\newblock {\em Ten lectures on wavelets}, volume~61 of {\em CBMS-NSF Regional
  Conference Series in Applied Mathematics}.
\newblock Society for Industrial and Applied Mathematics (SIAM), Philadelphia,
  PA, 1992.

\bibitem{daubechies98}
I.~Daubechies and W.~Sweldens.
\newblock Factoring wavelet transforms into lifting steps.
\newblock {\em J. Fourier Anal. Appl.}, 4(3):245--267, 1998.

\bibitem{donoho93}
David~L. Donoho.
\newblock Unconditional bases are optimal bases for data compression and for
  statistical estimation.
\newblock {\em Appl. Comput. Harmon. Anal.}, 1(1):100--115, 1993.

\bibitem{donoho}
D.L. Donoho.
\newblock Smooth wavelet decompositions with blocky coefficient kernels.
\newblock In {\em Recent advances in wavelet analysis}, volume~3 of {\em
  Wavelet Anal. Appl.}, pages 259--308. Academic Press, Boston, MA, 1994.

\bibitem{fattal09}
Raanan Fattal.
\newblock Edge-avoiding wavelets and their applications.
\newblock {\em ACM Trans. Graph.}, 28(3):1--10, 2009.

\bibitem{hbf2}
R.S.J. Frackowiak, K.J. Friston, C.~Frith, R.~Dolan, C.J. Price, S.~Zeki,
  J.~Ashburner, and W.D. Penny.
\newblock {\em {H}uman {B}rain {F}unction}.
\newblock Academic Press, 2nd edition, 2003.

\bibitem{freeman91}
William~T. Freeman and Edward~H. Adelson.
\newblock The design and use of steerable filters.
\newblock {\em IEEE Transactions on Pattern Analysis and Machine Intelligence},
  13:891--906, 1991.

\bibitem{SPM}
K.~J. Friston, A.~P. Holmes, K.~J. Worsley, J.~P. Poline, C.~D. Frith, and
  R.~S.~J. Frackowiak.
\newblock {Statistical parametric maps in functional imaging: A general linear
  approach}.
\newblock {\em Hum. Brain Mapp.}, 2(4):189--210, 1994.

\bibitem{unbalanced_haar}
M.~Girardi and W.~Sweldens.
\newblock A new class of unbalanced {H}aar wavelets that form an unconditional
  basis for {$L\sb p$} on general measure spaces.
\newblock {\em J. Fourier Anal. Appl.}, 3(4):457--474, 1997.

\bibitem{griebel07}
M.~Griebel and D.~Oeltz.
\newblock A sparse grid space-time discretization scheme for parabolic
  problems.
\newblock {\em Computing}, 81(1):1--34, 2007.

\bibitem{haxby}
James~V. Haxby, Ida~M. Gobbini, Maura~L. Furey, Alumit Ishai, Jennifer~L.
  Schouten, and Pietro Pietrini.
\newblock Distributed and overlapping representations of faces and objects in
  ventral temporal cortex.
\newblock {\em Science}, 293(5539):2425--2430, September 2001.

\bibitem{jansen2005second}
M.~Jansen and P.~Oonincx.
\newblock {\em Second generation wavelets and applications}.
\newblock Springer, 2005.

\bibitem{khodakovsky04}
Andrei Khodakovsky and Igor Guskov.
\newblock Compression of normal meshes.
\newblock In {\em Geometric modeling for scientific visualization}, Math. Vis.,
  pages 189--206. Springer, Berlin, 2004.

\bibitem{lounsbery94}
John~Michael Lounsbery.
\newblock {\em Multiresolution analysis for surfaces of arbitrary topological
  type}.
\newblock ProQuest LLC, Ann Arbor, MI, 1994.
\newblock Thesis (Ph.D.)--University of Washington.

\bibitem{mallat09}
St{\'e}phane Mallat.
\newblock {\em A wavelet tour of signal processing}.
\newblock Elsevier/Academic Press, Amsterdam, third edition, 2009.
\newblock The sparse way, With contributions from Gabriel Peyr{\'e}.

\bibitem{beyond_mind}
K.~Norman, S.~Polyn, G.~Detre, and J.~Haxby.
\newblock Beyond mind-reading: multi-voxel pattern analysis of fmri data.
\newblock {\em Trends in Cognitive Sciences}, 10(9):424--430, September 2006.

\bibitem{ozkaya11}
S.~Gorkem Ozkaya and Dimitri {Van De Ville}.
\newblock Anatomically adapted wavelets for integrated statistical analysis of
  fmri data.
\newblock In {\em ISBI}, pages 469--472. IEEE, 2011.

\bibitem{ruttimann9801}
U.E. Ruttimann, M.~Unser, R.R. Rawlings, D.~Rio, N.F. Ramsey, V.S. Mattay, D.W.
  Hommer, J.A. Frank, and D.R. Weinberger.
\newblock Statistical {A}nalysis of {F}unctional {MRI} {D}ata in the {W}avelet
  {D}omain.
\newblock {\em {IEEE} {T}ransactions on {M}edical {I}maging}, 17(2):142--154,
  1998.

\bibitem{sarty}
G.E. Sarty.
\newblock {\em Computing Brain Activity Maps from fMRI Time-Series Images}.
\newblock Cambridge University Press, 2007.

\bibitem{sweldens95}
Peter Schr{\"o}der and Wim Sweldens.
\newblock Spherical wavelets: {E}fficiently representing functions on the
  sphere.
\newblock {\em Computer Graphics Proceedings (SIGGRAPH 95)}, pages 161--172,
  1995.

\bibitem{selesnick05}
I.~W. Selesnick, R.~G. Baraniuk, and N.~C. Kingsbury.
\newblock {The dual-tree complex wavelet transform}.
\newblock {\em Signal Processing Magazine, IEEE}, 22(6):123--151, November
  2005.

\bibitem{sweldens_siam}
W.~Sweldens.
\newblock The lifting scheme: a construction of second generation wavelets.
\newblock {\em SIAM J. Math. Anal.}, 29(2):511--546 (electronic), 1998.

\bibitem{Sweldens_buildingyour}
W.~Sweldens and P.~Schr{\"o}der.
\newblock Building your own wavelets at home.
\newblock In {\em ACM SIGGRAPH Course Notes}.

\bibitem{sweldens96}
Wim Sweldens.
\newblock The lifting scheme: a custom-design construction of biorthogonal
  wavelets.
\newblock {\em Appl. Comput. Harmon. Anal.}, 3(2):186--200, 1996.

\bibitem{vandeville0406}
D.~{Van De Ville}, T.~Blu, and M.~Unser.
\newblock Integrated {W}avelet {P}rocessing and {S}patial {S}tatistical
  {T}esting of f{MRI} {D}ata.
\newblock {\em Neuro{I}mage}, 23(4):1472--1485, 2004.

\bibitem{vandeville04}
D.~{Van De Ville}, T.~Blu, and M.~Unser.
\newblock Wavelet-based fmri statistical analysis and spatial interpretation: A
  unifying approach.
\newblock In {\em ISBI}, pages 1167--1170. IEEE, 2004.

\bibitem{surfing}
D.~{Van De Ville}, T.~Blu, and M.~Unser.
\newblock Surfing the brain---{A}n overview of wavelet-based techniques for
  {fMRI} data analysis.
\newblock {\em {IEEE} Engineering in Medicine and Biology Magazine},
  25(2):65--78, March-April 2006.

\bibitem{vandeville06}
D.~{Van De Ville}, T.~Blu, and M.~Unser.
\newblock Wspm or how to obtain statistical parametric maps using
  shift-invariant wavelet processing.
\newblock In {\em Acoustics, Speech and Signal Processing, 2006. ICASSP 2006
  Proceedings. 2006 IEEE International Conference on}, volume~5, page~V, may
  2006.

\bibitem{vandeville07}
D.~{Van De Ville}, M.L. Seghier, F.~Lazeyras, T.~Blu, and M.~Unser.
\newblock {WSPM}: {W}avelet-based statistical parametric mapping.
\newblock {\em NeuroImage}, 37(4):1205--1217, 2007.

\bibitem{vasilyev00}
Oleg~V. Vasilyev and Christopher Bowman.
\newblock Second-generation wavelet collocation method for the solution of
  partial differential equations.
\newblock {\em J. Comput. Phys.}, 165(2):660--693, 2000.

\bibitem{wei01}
Li-Yi Wei and Marc Levoy.
\newblock Texture synthesis over arbitrary manifold surfaces.
\newblock In {\em Proceedings of the 28th annual conference on Computer
  graphics and interactive techniques}, SIGGRAPH '01, pages 355--360, New York,
  NY, USA, 2001. ACM.

\bibitem{Yuetal07}
P.~Yu, X.~Han, F.~S{\'e}gonne, R.~Pienaar, R.L. Buckner, P.~Golland, P.E.
  Grant, and B.~Fischl.
\newblock Cortical surface shape analysis based on spherical wavelets.
\newblock {\em IEEE Transaction on Medical Imaging}, 26(4):582--597, 2007.

\end{thebibliography}

\end{document}